\documentclass[12pt]{article}
\usepackage{amsmath,amssymb,amsthm,mathrsfs,hyperref,theoremref,lipsum,lineno,enumitem,authblk,lmodern,color}

\usepackage[backend=biber,style=numeric,sorting=nty]{biblatex}
\theoremstyle{plain}
\newtheorem*{thm*}{Theorem}
\newtheorem{thm}{Theorem}[section]

\newtheorem{lem}{Lemma}[section]
\newtheorem{pro}{Proposition}[section]

\theoremstyle{definition}

\newtheorem{rem}{Remark}[section]
\numberwithin{equation}{section}
\allowdisplaybreaks[2]

\newtheorem{lemma}{Lemma}[section]
\newtheorem{proposition}{Proposition}[section]

\newtheorem{conjecture}{Conjecture}[section]
\theoremstyle{definition}
\newtheorem{definition}{Definition}[section]

\theoremstyle{remark}

\def \d {\mathrm{d}}
\def \p {\mathfrak{p}}
\DeclareMathOperator{\tr}{Tr}
\newcommand{\R}{\mathbb{R}}
\newcommand{\Z}{\mathbb{Z}}

\newcommand{\Q}{\mathbb{Q}}
\newcommand{\C}{\mathbb{C}}
\newcommand{\cl}{\mathscr{C}}
\newcommand{\G}{\mathscr{G}}
\DeclareMathOperator{\Sp}{\mathrm{Sp}}
\newcommand{\SU}{\mathrm{SU}}
\DeclareMathOperator{\GSp}{\mathrm{GSp}}
\DeclareMathOperator{\GL}{\mathrm{GL}}
\DeclareMathOperator{\Irr}{\mathrm{Irr}}
\newcommand{\ST}{\mathrm{ST}}
\newcommand{\Gal}{\mathrm{Gal}}
\newcommand{\Codim}{\mathrm{Codim}}

\newcommand{\MT}{\mathrm{MT}}
\newcommand{\CST}{\mathrm{CST}}
\newcommand{\AST}{\mathrm{AST}}

\newcommand{\A}{\mathcal{A}}
\newcommand{\one}{\mathbf 1}
\def\dsum{\displaystyle \sum}
\title{Effective Chebotarev-Sato-Tate distribution\\ for abelian varieties with applications}

\author{Mohammed Amin Amri} 
 \newcommand{\Addresses}{{
  \bigskip

  Mohammed Amin ~Amri, \textsc{Department of Mathematics, AGA Laboratory, Ibn Tofail University, Kenitra, Morocco}\par\nopagebreak
  \textit{E-mail address}, Mohammed Amin~Amri: \texttt{amri.amine.mohammed@gmail.com}

}}   
\begin{document}

\maketitle
\begin{abstract}
We establish a uniform effective Chebotarev–Sato–Tate estimate for abelian varieties, conditional on the Mumford–Tate conjecture and the standard analytic conjectures for $L$-functions attached to the irreducible representations of the Chebotarev-Sato-Tate group. The proof uses spectral majorants and minorants of indicator functions. For regions whose boundaries have finite upper $(d-1)$- dimensional Minkowski content, the resulting exponent improves that in \cite{bucur2024}[Theorem 1.1] whenever the Sato–Tate group has rank greater than one. We establish uniform estimates for regions and finite Galois extensions that vary with the counting parameter, and for prime-dependent regions satisfying an explicit variation condition. As an application, we obtain a conditional upper bound for the number of primes at which a weight two newform with a quadratic coefficient field has a rational Fourier coefficient. We also formulate a discrepancy conjecture, motivated by a probabilistic model, that implies the asymptotic predicted by Murty \cite{murty1999}[Conjecture 3.4].
\end{abstract}
\section{Introduction}
An enduring theme in number theory is the study of error terms, that is, the discrepancies between actual counts and their asymptotic predictions. As Mazur emphasizes in \cite{mazur2008}, these terms encode arithmetic information not visible in the main term. The prime number theorem provides a basic example. Let $\pi(x)$ denote the number of primes at most $x$, and consider
$$
  \operatorname{Err}(x):=\pi(x)-\int_2^x \frac{\d t}{\log t}.
$$
The prime number theorem, proved independently by Hadamard and de la Vall\'ee-Poussin, states that $\operatorname{Err}(x)=o(x/\log x)$. The Riemann Hypothesis asserts that all nontrivial zeros of the Riemann zeta function $\zeta(s)$ lie on the critical line $\Re(s)=1/2$. Assuming this hypothesis yields a much sharper estimate $\operatorname{Err}(x)=O\bigl(x^{1/2}\log x\bigr)$, governed by the nontrivial zeros of $\zeta(s)$.

The same paradigm extends to the distribution of the values of arithmetic functions at prime numbers. A prototype is the effective form of the Chebotarev density theorem due to Lagarias and Odlyzko \cite{lagarias1977}, which provides an explicit error term under GRH for the Dedekind zeta function. Similarly, the effective form of the Sato--Tate distribution goes back to Murty \cite{Murty1985} for non-CM elliptic curves over $\Q$. Bucur, Fité, and Kedlaya \cite{bucur2024} recently extended it to abelian varieties with a connected Sato-Tate group, conditionally on standard analytic conjectures on $L$-functions attached to irreducible characters of the Sato--Tate group and the Mumford--Tate conjecture, building on the general motivic framework outlined in \cite{bucur2016}. Their method uses a multivariate Vinogradov function on a maximal torus to approximate the indicator function of an interval.

Our first aim is to establish a conditional effective form of the Sato--Tate conjecture for abelian varieties with a connected Sato--Tate group (see \thref{EffST}). Our approach uses a spectral approximation theorem (see \thref{BSpoly} and \thref{BScor1}) due to Colzani, Gigante, and Travaglini \cite{colzani2011}[Theorem 3.1], which provides spectral majorants and minorants of the characteristic function of an arbitrary measurable subset of the Sato--Tate group with finite upper Minkowski content. This gives effective estimates for general conjugation-invariant regions, including regions that shrink as the counting parameter grows. For regions with finite codimension-one boundary Minkowski content, our estimate improves the exponent in \cite{bucur2024}[Theorem 1.1]  whenever the Sato–Tate group has rank greater than one. It also improves the exponents in the GRH bounds recorded by Kumar, Kumari, and Mishra \cite[Remark 1.5]{Kumar2026} for joint distributions in planar regions whose boundaries are finite unions of rectifiable curves. 

Our second aim is to establish a quantitative form of the Chebotarev-Sato-Tate conjecture (see Theorem~4.2) for abelian varieties. This conjecture was formulated by the author in  \cite{amri2023} and proved there for abelian surfaces potentially of $\GL_2$-type. Such quantitative results are useful for addressing questions of Lang-Trotter type  (see, for instance, \cite{amri2024}). Such questions require simultaneous control of Frobenius classes in finite Galois groups and
normalized Frobenius classes in shrinking Sato–Tate regions. Both the extension and the region may vary, and the region may also depend on the prime. To this end, we establish quantitative results that may apply in these situations (see \thref{Unifeffec} and \thref{MovingCST}). Our result also provides a logarithmic improvement over the effective Chebotarev-Sato-Tate estimate in \cite{wong2019}[Theorem 1.2]. 

As an application, we study the quadratic-coefficient case of Murty’s conjecture \cite{murty1999}[Conjecture 3.4]. Building on Van Hirtum's work  \cite{van2017}, we consider weight-two newforms with a quadratic coefficient field and no nontrivial
inner twists. The quantitative estimate for prime-dependent moving regions in \thref{MovingCST} yields conditional upper bounds of size $x^{6/7}/(\log x)^{3/7}$ for the number of primes with rational Fourier coefficients. A probabilistic model suggests the additional cancellation required, leading to a discrepancy conjecture that implies Murty’s predicted asymptotic.
 
The paper is organized as follows. Section \ref{Sec1} develops the spectral approximation and its character form on compact Lie groups. Section \ref{Sec2} sets out the conjectural arithmetic framework {\it à la} Bucur, Fité and Kedlaya \cite{bucur2024}[\S 2]. Section \ref{Sec3} proves the effective Sato-Tate estimate and its hybrid version, treats prime-dependent regions, and applies the results to the quadratic-coefficient case of Murty's conjecture.
\paragraph{Notation and terminology}
For a Riemannian manifold $(M,g)$ of dimension $d$, write $d_g$ for its Riemannian distance and $d\mu$ for its normalized Riemannian volume measure, such that $\mu(M)=1$. For a subset $\Omega$ of $M$, write $\mu(\Omega)$ for its measure with respect to $d\mu$. For $\Delta>0$, we denote the tube around $\Omega$ by
$$\Omega^{(\Delta)}:=\{x\in M\; :\; d_g(x,\Omega)<\Delta\}.$$
For an integer $0\le q<d$, the upper and lower $q$-dimensional Minkowski contents are
$$
 \overline{\mathcal M}_g^q(\Omega)=
 \limsup_{\Delta\to0^+}\frac{\mu(\Omega^{(\Delta)})}
 {\omega_{d-q}\Delta^{d-q}},\qquad
 \underline{\mathcal M}_g^q(\Omega)=
 \liminf_{\Delta\to0^+}\frac{\mu(\Omega^{(\Delta)})}
 {\omega_{d-q}\Delta^{d-q}},
$$
where $\omega_{d-q}$ is the volume of the unit ball in $\R^{d-q}$. If these quantities agree, we denote their common value by $\mathcal M_g^q(\Omega)$. For a subset $\Omega\subseteq M$ with finite upper $q$-dimensional boundary content $\overline{\mathcal M}_g^q(\partial\Omega)<\infty$, define
\begin{equation}\label{eqboundary}
 C_{\partial\Omega}:=
 \max\left\{1,\sup_{0<\Delta\le1}
    \frac{\mu((\partial\Omega)^{(\Delta)})}{\Delta^{d-q}}\right\}.
\end{equation}
For a family $\boldsymbol\Omega=(\Omega_i)_{i\in I}$ of subsets of $M$, we write $\partial \boldsymbol\Omega$ for the family $(\partial \Omega_i)_{i\in I}$ of the topological boundaries of $\Omega_i$. We define its uniform upper $q$-dimensional Minkowski content by $$\overline{\mathcal{M}}^q_g(\boldsymbol\Omega):=\limsup_{\Delta\to 0^+}\sup_{i\in I}\frac{\mu(\Omega_i^{(\Delta)})} {\omega_{d-q}\Delta^{d-q}}.$$   
If $\overline{\mathcal{M}}^q_g(\boldsymbol\Omega)<\infty$, we say that the family $\boldsymbol\Omega$ has a finite uniform upper  $q$-dimensional Minkowski content. If $\partial\boldsymbol\Omega$ has finite uniform upper $q$-dimensional boundary content $\overline{\mathcal{M}}^q_g(\partial\boldsymbol\Omega)<\infty$, we define
\begin{equation}\label{eqboundaryfamily}
 C_{\partial\boldsymbol\Omega}:=
 \max\left\{1,\sup_{i\in I}\sup_{0<\Delta\le1}
 \frac{\mu((\partial\Omega_i)^{(\Delta)})}{\Delta^{d-q}}\right\}<\infty.
\end{equation}
Let $f: X \longrightarrow \R$ and $g: X \longrightarrow \R$ be two real-valued functions defined on an ordered set. Throughout the paper, we use Landau's big-$O$ notation $f=O(g)$ to mean that $|f(x)|\le C|g(x)|$ holds for all $x\ge x_0$, for some constant $C>0$, and some $x_0\in X$. We also use the equivalent Vinogradov notation $f\ll g$. We call $C$ the implied constant and write $f\ll_\alpha g$ to emphasize its dependence on a parameter $\alpha$. We write $f\asymp g$ if $f=O(g)$ and $g=O(f)$.

For a number field $F$, a prime $\mathfrak p$ of $F$ means a prime ideal of its ring of integers. We denote by $\pi_F(x)$ the number of primes of $F$ of norm at most $x$. For a finite exceptional set $S$ of primes of $F$, we denote by $\pi_F^S(x)$ the number of primes of $F$ outside $S$ of norm at most $x$. When $F=\Q$, we write $\pi(x)$.
\section{Preliminary results}\label{Sec1}
In this section, we derive the essential tools from \cite{colzani2011}  needed for our purposes. We then specialize these tools to compact Lie group settings for use in our application to the effective Sato-Tate distribution.
\subsection{Spectral approximation for compact Riemannian manifolds}
Let $(M,g)$ be a smooth $d$-dimensional compact, connected Riemannian manifold without boundary, with a Riemannian distance $d_g$ and a normalized Riemannian volume $\mu$ such that $\mu(M)=1$. Let $\Delta_g$ be the non-negative Laplace-Beltrami operator, and let $(\varphi_k)_{k\geq 0}$ be an orthonormal basis of $L^2(M,\d\mu)$ consisting of its eigenfunctions with $\varphi_0=1$. Let $(\lambda_k^2)_{k\ge 0}$ be the corresponding eigenvalues, counted with multiplicity
$$
\Delta_g \varphi_k=\lambda_k^2 \varphi_k.
$$
\\
Let $f\in L^2(M,d\mu)$. For $k\ge 0$, we define the $k$-th Fourier coefficient $\hat{f}(k)$ of $f$ by
$$\hat{f}(k):=\int_M f(x)\overline{\varphi_k(x)}\d\mu(x).$$
Then, one can derive the following theorem from \cite{colzani2011}[Theorem 3.1].
\begin{thm}\thlabel{BSpoly}
Let $\Omega\subset M$ be a measurable subset of $M$ such that $\overline{\mathcal{M}}^q_g(\partial \Omega)<\infty$ for some integer $0\le q\le d-1$. Let $C_{\partial \Omega}>0$ be as defined in \eqref{eqboundary}. Fix $\alpha=d+
1$. Then, for every $T\ge 1$, there exist a constant $c>0$ depending only on $M$ and real-valued spectral polynomials
$$P^\pm_{T}=\sum_{\substack{k\ge 0\\\lambda_k<T}}a_{k,T}^\pm\varphi_k$$
enjoying the following properties
    \begin{enumerate}[label=(\roman*)]
        \item For every $x\in M$ we have
        $$P^-_{T}(x)\leq \one_\Omega(x)\leq P_{T}^+(x)\quad \text{and} \quad |P_T^+(x)-P_T^-(x)|\leq\frac{c}{(1+T\; d_g(x,\partial \Omega))^{\alpha}}.$$
        \item The constant term of $P^\pm_T$ satisfies
        $$|a^{\pm}_{0,T}-\mu(\Omega)|\ll_{M}\frac{C_{\partial \Omega}}{T^{d-q}}\quad\text{and}\quad |a_{0,T}^{^\pm}|\ll_M 1.$$
        \item We have
        $$\sup_{x\in M}\sum_{0<\lambda_k< T}|a^\pm_{k,T}||\varphi_k(x)|\ll_{M} C_{\partial \Omega}^{1/2}\Psi_q(T)$$
    \end{enumerate}
    where $\Psi_q(T):=T^{q/2}$ for $ q>0$ and $\Psi_0(T):=1+\log T$.
\end{thm}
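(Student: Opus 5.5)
The plan is to build $P_T^\pm$ by smoothing the indicator function with a spectrally band-limited kernel that has fast spatial decay. This is the construction of Colzani, Gigante and Travaglini \cite{colzani2011}[Theorem 3.1]; what needs work is the three quantitative bounds.

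\textbf{Kernel and majorants.} Fix an even Schwartz function $h$ on $\R$ with $\hat h$ supported in $[-1,1]$ and $h\ge 0$. Define the kernel
$$K_T(x,y)=\sum_{\lambda_k<T}\kappa(\lambda_k/T)\,\varphi_k(x)\overline{\varphi_k(y)},$$
where $\kappa$ comes from $h$ via the cosine transform, so that $\kappa(\sqrt{\Delta_g}/T)$ is a function of the wave group at times $|t|\le 1/T$. By finite propagation speed together with the Hörmander local parametrix, $K_T$ satisfies
$$|K_T(x,y)|\ll_{M,N}\frac{T^d}{(1+T\,d_g(x,y))^N}\quad\text{for every } N.$$
Following \cite{colzani2011}, one then sets
$$P_T^\pm=\one_\Omega*K_T\pm G_T,\qquad G_T(x)\asymp(1+T\,d_g(x,\partial\Omega))^{-\alpha},$$
where $G_T$ is itself a spectral polynomial of degree below $T$. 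It is obtained by applying a positive band-limited kernel to $\one_{(\partial\Omega)^{(1/T)}}$ and rescaling, and positivity of the kernel near the diagonal gives the lower bound $G_T(x)\gg 1$ on the tube. Two facts then give (i). First, $|\one_\Omega*K_T-\one_\Omega|(x)\ll(1+T\,d_g(x,\partial\Omega))^{-\alpha}$, because the kernel decays and only the part of the kernel that crosses $\partial\Omega$ contributes. Second, $G_T$ dominates this error with the constant $c$, which gives both inequalities in (i) and the bound $|P_T^+-P_T^-|=2G_T$. This first step is what the theorem quotes from the reference, and the choice $\alpha=d+1$ is enough to make the integrals below converge.

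\textbf{Constant term (ii).} We have $a^\pm_{0,T}=\int P_T^\pm\,\d\mu$, and $\int\one_\Omega*K_T\,\d\mu=\kappa(0)\mu(\Omega)=\mu(\Omega)$. It therefore suffices to show $\int_M G_T\,\d\mu\ll C_{\partial\Omega}T^{-(d-q)}$. Split $M$ dyadically into the shells $\{2^{j-1}/T\le d_g(x,\partial\Omega)<2^j/T\}$. On the $j$-th shell $G_T\ll 2^{-j\alpha}$, and by \eqref{eqboundary} the shell has measure at most $C_{\partial\Omega}(2^j/T)^{d-q}$ whenever $2^j/T\le1$; beyond that scale, use the trivial bound $\mu\le1$. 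The resulting sum $\sum_j 2^{-j(\alpha-d+q)}$ converges, so the error is $\ll C_{\partial\Omega}T^{q-d}$. The bound $|a_{0,T}^\pm|\ll 1$ follows because $0\le P^+\ll1$ and $P^-\ge -c$.

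\textbf{Coefficient sum (iii): the main obstacle.} Apply Cauchy--Schwarz:
$$\sum_{0<\lambda_k<T}|a_k||\varphi_k(x)|\le\Bigl(\sum_{\lambda_k<T}|\varphi_k(x)|^2\lambda_k^{-s}\Bigr)^{1/2}\Bigl(\sum_{\lambda_k<T}\lambda_k^{s}|a_k|^2\Bigr)^{1/2}.$$
The first factor is controlled by the local Weyl law $\sum_{\lambda_k<\Lambda}|\varphi_k(x)|^2\ll\Lambda^d$, used uniformly in $x$ together with dyadic summation in $\lambda_k$. The second factor is controlled by the Fourier coefficients of $\one_\Omega$ in dyadic bands $\lambda_k\sim 2^m$. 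Each band is bounded by smoothing at scale $2^{-m}$: the $L^2$ mass of $\one_\Omega$ in that band is $\ll\mu((\partial\Omega)^{(2^{-m})})\le C_{\partial\Omega}2^{-m(d-q)}$, and the $G_T$ part obeys a similar bound. Taking $s=d$ balances the two factors. Band $m$ then contributes $\bigl(2^{md}\cdot C_{\partial\Omega}2^{-m(d-q)}\bigr)^{1/2}=C_{\partial\Omega}^{1/2}2^{mq/2}$, and summing over $2^m<T$ gives $C^{1/2}_{\partial\Omega}T^{q/2}$ for $q>0$. When $q=0$ every band contributes $O(C^{1/2}_{\partial\Omega})$, and summing over the $\asymp\log T$ bands gives $1+\log T$. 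The delicate point is carrying out the Cauchy--Schwarz band by band rather than globally, and checking that the band-limited $L^2$ estimate really holds with only the Minkowski-content hypothesis; for the latter I would use the $\sup_{\Delta\le1}$ form of $C_{\partial\Omega}$ in \eqref{eqboundary}.
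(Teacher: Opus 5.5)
Your proofs of (ii) and (iii) follow the paper. For (ii), the constant term is $\mu(\Omega)$ plus or minus the integral of the correction, bounded over dyadic shells using $C_{\partial\Omega}$ for radii at most $1$ and $\mu\le 1$ beyond. For (iii), you use a band-by-band Cauchy--Schwarz against the local Weyl law. The $L^2$ mass of $\one_\Omega$ in the band $[L,2L)$ is bounded by subtracting an approximant of degree below $L$; the paper uses $P^+_{L/2}$, and your smoothing at scale $2^{-m}$ works equally well. The correction term is harmless because $\|G_T\|_2^2\ll\int_M G_T\,\d\mu$. The globally weighted Cauchy--Schwarz display you write first would lose a factor $(\log T)^{1/2}$ when $q>0$. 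Keep the band-by-band version, which needs no weight.

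The real problem is in (i).
\begin{itemize}
\item \textbf{Your $G_T$ is not the construction in \cite{colzani2011}, and the domination claim is unproved.} You apply a positive band-limited kernel to $\one_{(\partial\Omega)^{(1/T)}}$, and near-diagonal positivity only gives $G_T\gg 1$ on the tube. Take a point $x$ with $d_g(x,\partial\Omega)=\delta\gg 1/T$. The tube indicator vanishes on the whole ball of radius $\delta-1/T$ about $x$, while the smoothing error $|\one_\Omega*K_T-\one_\Omega|(x)$ is generically nonzero there. Domination at $x$ would need off-diagonal lower bounds for your positive kernel at least as large as the tail of $K_T$. You neither state nor prove such bounds, and band-limited positive kernels can vanish at points (think of the Fej\'er kernel).
\item \textbf{The fix is to smooth the envelope itself, as the paper does.} The paper's construction, taken from \cite{colzani2011}, is $P_T^\pm=K_T*(\one_\Omega\pm H_T)$ with $H_T=\beta(1+T\,d_g(\cdot,\partial\Omega))^{-\alpha}$. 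Since $H_T(y)\ge 2^{-\alpha}H_T(x)$ whenever $d_g(x,y)<1/T$, the envelope is comparable to itself at scale $1/T$ everywhere. So the needed lower bound is a local matter at every point of $M$, not only on the tube.
\item \textbf{Your later steps survive unchanged.} With $G_T:=K_T*H_T$, your arguments for (ii) and (iii) go through verbatim. They use only $\int G_T=\int H_T$ and $0\le 2G_T=P_T^+-P_T^-\le c\,(1+T\,d_g(x,\partial\Omega))^{-\alpha}$, both of which (i) supplies.
\end{itemize}

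A smaller inconsistency in your kernel setup: a multiplier vanishing on $[1,\infty)$, as the truncation $\lambda_k<T$ requires, cannot be a function of the wave group at times $|t|\le 1/T$ only, since it would then be entire. The decay $|K_T(x,y)|\ll_N T^d(1+T\,d_g(x,y))^{-N}$ comes instead from the smoothness of the compactly supported multiplier, and that is all you need.
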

\begin{proof}
Following the construction in \cite{colzani2011}[Theorem 1.1], consider a real radial function
$m\in C_c^\infty(\R^d)$, supported in $\{|\xi|<1/2\}$, with
$\int_{\R^d}m(\xi)^2\d\xi=1$, and define the radial multiplier
$$
 h(|\xi|)=(1+|\xi|^2)^{-(d+1)/2}(m*m)(\xi).
$$
Then $h(0)=1$ and $h(t)=0$ for $t\geq1$. Set
\begin{equation}\label{eqkernel}
 K_T(x,y)=\sum_{\lambda_k< T}
 h\left(\dfrac{\lambda_k}{T}\right)\varphi_k(x)\overline{\varphi_k(y)}.
\end{equation}
Let
$$
 H_T(x)=\frac{\beta}{(1+T\,d_g(x,\partial \Omega))^{\alpha}},
$$
where $\beta>0$ is chosen independently of $\Omega$ and $T$ and depends only on $M$, as in the proof of \cite{colzani2011}[Theorem 3.1]. Define
$$
 P_T^\pm(x)=\int_M K_T(x,y)
                  \bigl(\one_\Omega(y)\pm H_T(y)\bigr)\,d\mu(y).
$$
Substituting \eqref{eqkernel} into the displayed equation above and interchanging the integral and the finite sum gives the explicit eigenfunction expansions
$$
P_T^\pm(x)=\sum_{\lambda_k< T}a^\pm_{k,T}
 \varphi_k(x)
$$
where $a^\pm_{k,T}=h\left(\dfrac{\lambda_k}{T}\right)(\widehat{\one}_\Omega(k)\pm \widehat{H}_T(k))$. 
In particular,  since $h(0)=1$ and $\varphi_0=1$, we have
$$
 a_{0,T}^\pm=\mu(\Omega)\pm\int_M H_T\,\d\mu.
$$
Assertion (i) is proved in \cite{colzani2011}[Theorem 3.1]. As for (ii), we estimate $I_\alpha(T):=\int_M H_T\,\d\mu$  using the following dyadic partition of $M$
$$M=(\partial \Omega)^{(T^{-1})}\cup \bigcup_{j\ge 0} D_{j,T}$$
where $D_{j,T}:=\{x\in M \; :\; \frac{2^j}{T}\leq d_g(x,\partial \Omega)<\frac{2^{j+1}}{T}\}$. By our hypothesis on the upper Minkowski content of the boundary of $\Omega$, the constant $ C_{\partial \Omega}$ defined in \eqref{eqboundary} is finite and satisfies
$$
\mu((\partial \Omega)^{(\Delta)})\le C_{\partial \Omega}\; \Delta^{d-q}\quad \text{for every}\quad \Delta>0.
$$
 On $(\partial \Omega)^{(T^{-1})}$, the decay factor $(1+T d_g(x,\partial \Omega))^{-\alpha}$ is at most $1$, whereas on $D_{j,T}$, we have
$$
 (1+T\, d_g(x,\partial \Omega))^{-\alpha}\leq(1+2^j)^{-\alpha}\leq2^{-\alpha j}.
$$
Since $D_{j,T}\subset(\partial \Omega)^{(2^{j+1}/T)}$,  it follows that
\begin{align*}
 I_\alpha(T)
 &=\beta\int_{(\partial \Omega)^{(T^{-1})}}(1+T\,d_g(x,\partial \Omega))^{-\alpha}\,\d\mu
   +\beta \sum_{j=0}^{\infty}\int_{D_{j,T}}(1+Td_g(x,\partial \Omega))^{-\alpha}\,\d\mu\\
 &\leq\beta\mu((\partial \Omega)^{(T^{-1})})+\beta\sum_{j=0}^{\infty}\frac{\mu(D_{j,T})}{2^{\alpha j}}\\
 &\ll_M \dfrac{C_{\partial \Omega}}{T^{(d-q)}}.
\end{align*}
Therefore 
\begin{equation}\label{eqnorms}
\|P^{\pm}_T-\one_\Omega\|_{L^1(M)}\ll_M \dfrac{C_{\partial \Omega}}{T^{(d-q)}},\qquad\qquad \|P^{\pm}_T-\one_\Omega\|_{L^2(M)}\ll_M \dfrac{C_{\partial \Omega}^{1/2}}{T^{\frac{d-q}{2}}}
\end{equation}
and 
$$|a_{0,T}^\pm-\mu(\Omega)|\le I_\alpha(T)\ll_M \dfrac{C_{\partial \Omega}}{T^{d-q}}.$$
For the second inequality in (ii), we have $|a_{0, T}^\pm|\le 1+\beta$, since $H_T\le\beta$ and $\mu(M)=1$.
It remains to prove (iii). For $L\geq2$, let $\Pi_L$ be the orthogonal projection onto frequencies $L\leq\lambda_k<2L$. The polynomial $P_{L/2}^+$ has no frequencies in this band, so
\begin{equation}\label{eqband}
 \left(\sum_{L\leq\lambda_k<2L}|\widehat \one_\Omega(k)|^2\right)^{1/2}
 =\|\Pi_L \one_\Omega\|_2
 =\|\Pi_L(\one_\Omega-P_{L/2}^+)\|_2
 \ll\frac{C_{\partial \Omega}^{1/2}}{L^{\frac{d-q}{2}}}.
\end{equation}
By \cite{Hormander1968}[Theorem 5.1, eq (5.3).], we have
\begin{equation}\label{eqweyl}
\sup_{x\in M}\sum_{\lambda_k< L}|\varphi_k(x)|^2\ll_M L^d.   
\end{equation}
For each $x\in M$, by the Cauchy-Schwarz inequality together with \eqref{eqband} and \eqref{eqweyl}, one gets
\begin{align*}
 \sum_{L\leq\lambda_k<2L}|\widehat \one_\Omega(k)|\,|\varphi_k(x)|
 &\leq
 \left(\sum_{L\leq\lambda_k<2L}|\widehat \one_\Omega(k)|^2\right)^{1/2}
 \left(\sum_{\lambda_k<2L}|\varphi_k(x)|^2\right)^{1/2}\\
 &\ll_M C_{\partial \Omega}^{1/2}L^{q/2}.
\end{align*}
The frequencies $\lambda_k$ less than $2$ contribute $O_M(1)$. We sum the band estimate over $L=2^l$, with $l\geq1$ and $2^l\leq T$.
For $q>0$, the geometric sum contributes $O_q(T^{q/2})$. For $q=0$, each band is $O(C_{\partial \Omega}^{1/2})$, and there are $O(1+\log T)$ bands. Thus,
\begin{equation}\label{eqfbound}
 \sup_{x\in M}\sum_{0<\lambda_k< T}
 |\widehat \one_\Omega(k)|\,|\varphi_k(x)|
 \ll C_{\partial \Omega}^{1/2}\Psi_q(T).
\end{equation}
Finally, $a_{k,T}^\pm-\widehat \one_\Omega(k)$ are the $k$th Fourier coefficients of
$P_T^\pm-\one_\Omega$. Bessel's inequality, \eqref{eqnorms}, and \eqref{eqweyl}  give the following bound, uniformly in $x$
\begin{align*}
 \sum_{0<\lambda_k< T}
 |a_{k,T}^\pm-\widehat \one_\Omega(k)|\,|\varphi_k(x)|
 &\leq\|P_T^\pm-\one_\Omega\|_{L^2(M)}
       \left(\sum_{\lambda_k\leq T}|\varphi_k(x)|^2\right)^{1/2}\\
 &\ll C_{\partial \Omega}^{1/2}T^{q/2}.
\end{align*}
Combining this with \eqref{eqfbound} proves (iii).
\end{proof}
We conclude this subsection by recording a uniformity property of the spectral approximation polynomials from \cite{colzani2011}[Theorem 3.1]. In our arithmetic applications, we will need spectral approximations for families of subsets, with uniform bounds for the resulting error terms. In the proof of \thref{BSpoly}, the dependence on the set enters only through the constant associated with its boundary. This gives the following uniform version of \thref{BSpoly} for families with uniform upper $q$-dimensional Minkowski content.
\begin{pro}\thlabel{BSuniform}
Let $\boldsymbol\Omega:=(\Omega_i)_{i\in I}$ be a family of measurable subsets of $M$ such that $\overline{\mathcal{M}}^q_g(\partial \boldsymbol\Omega)<\infty$ for some integer $0\le q\le d-1$. Let $C_{\partial\boldsymbol \Omega}>0$ be as defined in \eqref{eqboundaryfamily}. Fix $\alpha=d+1$. For every $T\ge 1$ there exist a constant $c>0$ depending only on $M$, and real-valued spectral polynomials
$$P^\pm_{T,\Omega_i}=\sum_{\substack{k\ge 0\\\lambda_k<T}}a_{k,T}^{\pm,i}\varphi_k$$
enjoying the following properties
    \begin{enumerate}[label=(\roman*)]
        \item For every $x\in M$ we have
        $$P^-_{T,\Omega_i}(x)\leq \one_{\Omega_{i}}(x)\leq P_{T,\Omega_i}^+(x)\quad \text{and} \quad | P_{T,\Omega_i}^+(x)-P_{T,\Omega_i}^-(x)|\leq \frac{c}{(1+T\; d_g(x,\partial \Omega_i))^{\alpha}}.$$
        \item The constant term of $P^\pm_{T,\Omega_i}$ satisfies
        $$|a^{\pm,i}_{0,T}-\mu(\Omega_i)|\ll_{M}\frac{C_{\partial \boldsymbol\Omega}}{T^{d-q}},\qquad \text{and}\qquad|a^{\pm,i}_{0,T}|\ll_M 1.$$
        \item We have
        $$\sup_{x\in M}\sum_{0<\lambda_k< T}|a^{\pm,i}_{k,T}||\varphi_k(x)|\ll_{M} C_{\partial \boldsymbol\Omega}^{1/2}\Psi_q(T)$$
    \end{enumerate}
    where $\Psi_q(T):=T^{q/2}$ for $ q>0$ and $\Psi_0(T):=1+\log T$.
\end{pro}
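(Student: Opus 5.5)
The plan is to rerun the proof of \thref{BSpoly} separately for each $\Omega_i$, with the same kernel and the same auxiliary function. At each step I will check that every implied constant depends only on $M$ and on the boundary constant of $\Omega_i$. I will then dominate that boundary constant by $C_{\partial\boldsymbol\Omega}$ from \eqref{eqboundaryfamily}.

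\textbf{Construction.} Fix once and for all the radial multiplier $h$, the kernel $K_T$ of \eqref{eqkernel}, the exponent $\alpha=d+1$, and the constant $\beta$ from \cite{colzani2011}[Theorem 3.1]. None of these depend on $\Omega_i$. For each $i\in I$ set
\[
H_{T,i}(x)=\frac{\beta}{(1+T\,d_g(x,\partial\Omega_i))^{\alpha}},\qquad
P^\pm_{T,\Omega_i}(x)=\int_M K_T(x,y)\bigl(\one_{\Omega_i}(y)\pm H_{T,i}(y)\bigr)\,d\mu(y).
\]
Assertion (i) then follows verbatim from \cite{colzani2011}[Theorem 3.1]. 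The point to verify is that the constant $c$ there comes only from kernel estimates for $K_T$ on $M$ and from $\beta$, and never from the geometry of $\partial\Omega_i$. Equivalently, the sandwich $P^-\le\one_\Omega\le P^+$ is obtained pointwise from the decay of $K_T$ relative to $H_T$, and that argument holds for an arbitrary measurable set. This is the step I would check most carefully in \cite{colzani2011}: that $\beta$ and $c$ are chosen independently of the set. The proof of \thref{BSpoly} already asserts this.

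\textbf{Constant term.} For (ii), the dyadic decomposition argument bounds $I_\alpha(T)=\int_M H_{T,i}\,d\mu$ using only the inequality
\[
\mu\bigl((\partial\Omega_i)^{(\Delta)}\bigr)\le C\,\Delta^{d-q}.
\]
By definition of $C_{\partial\boldsymbol\Omega}$, this inequality holds with $C=C_{\partial\boldsymbol\Omega}$ for all $i$ and all $0<\Delta\le1$. For $\Delta>1$ it holds trivially, since $\mu\le1\le C_{\partial\boldsymbol\Omega}$. Hence
\[
|a^{\pm,i}_{0,T}-\mu(\Omega_i)|\ll_M \frac{C_{\partial\boldsymbol\Omega}}{T^{d-q}}
\]
uniformly in $i$, and $|a^{\pm,i}_{0,T}|\le 1+\beta$. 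The same computation gives the uniform analogues of \eqref{eqnorms}:
\[
\|P^\pm_{T,\Omega_i}-\one_{\Omega_i}\|_{L^1(M)}\ll_M \frac{C_{\partial\boldsymbol\Omega}}{T^{d-q}},\qquad
\|P^\pm_{T,\Omega_i}-\one_{\Omega_i}\|_{L^2(M)}\ll_M \frac{C_{\partial\boldsymbol\Omega}^{1/2}}{T^{(d-q)/2}}.
\]

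\textbf{Coefficient sum.} For (iii), I repeat the band argument.
\begin{enumerate}[label=(\alph*)]
\item The bound \eqref{eqband} for $\|\Pi_L\one_{\Omega_i}\|_2$ uses the uniform $L^2$ bound above for $P^+_{L/2,\Omega_i}$, so it holds with $C_{\partial\boldsymbol\Omega}^{1/2}$.
\item Hörmander's local Weyl bound \eqref{eqweyl} depends only on $M$.
\item Summing the bands gives $C_{\partial\boldsymbol\Omega}^{1/2}\Psi_q(T)$.
\item The correction term $\sum|a^{\pm,i}_{k,T}-\widehat{\one}_{\Omega_i}(k)||\varphi_k(x)|$ is handled by Bessel's inequality together with the uniform $L^2$ bound.
\end{enumerate}
Since every estimate in the proof of \thref{BSpoly} enters only through $\sup_{0<\Delta\le1}\mu((\partial\Omega)^{(\Delta)})\Delta^{q-d}$ and constants depending on $M$, taking the supremum over $i$ yields the proposition.
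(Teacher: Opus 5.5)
Your proposal is correct and follows the paper's own route. The paper justifies the proposition in one line, by noting that the proof of \thref{BSpoly} depends on the set only through its boundary constant. You carry out that check explicitly: the set-independence of $h$, $K_T$, $\beta$ and $c$, the tube bound for $\Delta>1$, and the uniform $L^1$ and $L^2$ bounds feeding the band argument. You then dominate each boundary constant by $C_{\partial\boldsymbol\Omega}$, exactly as the paper intends.
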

\subsection{Application to Lie groups}\label{liegroups}
In this subsection, we express \thref{BSpoly} in terms of irreducible characters. We first recall the necessary background from \cite{Bump2013,knapp2002}.\\

Let $G$ be a compact connected reductive Lie group with Lie algebra $\mathfrak{g}$. Fix an $\mathrm{Ad}(G)$-invariant inner product on $\mathfrak{g}$ and equip $G$ with the corresponding bi-invariant Riemannian metric $g$. Then $(G,g)$ is a smooth Riemannian manifold of dimension $d$, whose normalized Riemannian volume measure coincides with its normalized Haar measure $\mu$. Fix a maximal torus $T\subset G$ and let $r$ be its dimension. Set $\mathfrak{t}:=\mathrm{Lie}(T)$ and let $\Phi(G,T)$ be the associated root system. Given a choice of positive roots $\Phi^+$, write 
$$\rho:=\frac{1}{2}\sum_{\alpha\in \Phi^+}\alpha$$
for the Weyl vector and put $s=|\Phi^+|$. Let $\Lambda_G:=X^*(T)$ be the integral weight lattice and let $\Lambda^+_G\subset \Lambda_G$ be the set of dominant weights.
Given $\lambda\in \Lambda_{G}^+$, let $(\pi_\lambda,V_\lambda)$ denote the irreducible unitary representation of $G$ with highest weight $\lambda$, where $V_\lambda$ is equipped with a $G$-invariant Hermitian inner product $\langle\cdot,\cdot\rangle$.
Choose an orthonormal basis $(e_i)_{1\le i\le d_\lambda}$ of $V_\lambda$. Define the normalized matrix coefficient by 
$$m_{i,j}^{\lambda}(g):=\sqrt{d_\lambda}\; \langle e_i,\pi_\lambda(g)e_j \rangle.$$
 The matrix coefficients $(m_{i,j}^{\lambda})_{i,j,\lambda}$ form an orthonormal basis of $L^2(G,\d\mu)$ and satisfy 
$$\Delta_g m_{i,j}^\lambda=c(\lambda)m_{i,j}^\lambda,\qquad c(\lambda):=\|\lambda+\rho\|^2-\|\rho\|^2=(\lambda,\lambda+2\rho),$$
where $(\cdot,\cdot)$ is the inner product on $\mathfrak{t}^*$ induced by the fixed  $\mathrm{Ad}(G)$-invariant inner product on $\mathfrak{g}$ and  $\Delta_g$ is the Laplacian defined by the Riemannian structure on $G$ (cf. \cite{knapp2002}[Proposition 5.28(b)]). Since $G$ is connected, we have $c(\lambda) =0$ if and only if $\lambda=0$.
\begin{pro}\thlabel{BScor1}
 Let $G$ be a compact connected reductive Lie group and let $\Omega$ be a measurable subset of $G$ invariant under conjugation such that $\overline{\mathcal{M}}^q_g(\partial \Omega)<\infty$ for some integer $0\le q\le d-1$. Let $C_{\partial \Omega}>0$ be as defined in \eqref{eqboundary}. For every $T\ge 1$ there are finite linear combinations of irreducible characters of $G$ 
$$Q^\pm_{T}=\sum_{\substack{\lambda\in\Lambda_G^+\\ c(\lambda)<T^2}}c^\pm_{\lambda, T} \chi_\lambda$$
 enjoying the following properties
    \begin{enumerate}[label=(\roman*)]
        \item Pointwise on $G$ we have $$Q^-_{T}\leq \mathbf{1}_\Omega\leq Q_{T}^+.$$
        \item  The constant term satisfies
        $$|c^{\pm}_{0,T}-\mu(\Omega)|\ll_{G}\frac{C_{\partial \Omega}}{T^{d-q}}\quad\text{and}\quad |c^\pm_{0,T}|\ll_G1.$$ 
        \item We have 
        $$\sum_{\substack{\lambda\in\Lambda_G^+\setminus\{0\}\\c(\lambda)< T^2}}|c^\pm_{\lambda,T}|d_\lambda\ll_{G} C_{\partial \Omega}^{1/2}\Psi_q(T).$$
    \end{enumerate}
    where $\Psi_q(T)=T^{q/2}$ for $q>0$ and $\Psi_0(T)=1+\log T$.
\end{pro}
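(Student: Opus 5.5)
Apply \thref{BSpoly} on $M=G$ with the bi-invariant metric and the orthonormal basis of normalized matrix coefficients $m^\lambda_{i,j}$, which are Laplace eigenfunctions with $\lambda_k^2=c(\lambda)$; the condition $\lambda_k<T$ becomes $c(\lambda)<T^2$. This yields spectral polynomials $P_T^\pm$ with properties (i)--(iii) of \thref{BSpoly}. We then average over conjugation, setting
$$Q_T^\pm(x):=\int_G P_T^\pm(yxy^{-1})\,\d\mu(y).$$
Since $\Omega$ is conjugation-invariant, so is $\one_\Omega$, and integrating $P_T^-(yxy^{-1})\le\one_\Omega(yxy^{-1})=\one_\Omega(x)\le P_T^+(yxy^{-1})$ over $y$ gives (i) at once.

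To see that $Q_T^\pm$ is a combination of characters, use Schur orthogonality: $\int_G \pi_\lambda(yxy^{-1})\,\d\mu(y)=\frac{\chi_\lambda(x)}{d_\lambda}\,\mathrm{Id}$. Hence the average of $m^\lambda_{i,j}$ equals $\delta_{ij}\,\chi_\lambda/\sqrt{d_\lambda}$ (up to conjugation conventions). Writing $P_T^\pm=\sum_{\lambda,i,j}a^{\pm}_{\lambda,i,j}m^\lambda_{i,j}$, we obtain
$$c^\pm_{\lambda,T}=d_\lambda^{-1/2}\sum_{i}a^\pm_{\lambda,i,i}.$$
Only $\lambda$ with $c(\lambda)<T^2$ occur. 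For $\lambda=0$ the coefficient is unchanged, so $c^\pm_{0,T}=a^\pm_{0,T}$, and (ii) follows directly from \thref{BSpoly}(ii).

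The main step is (iii), which requires controlling $\sum_\lambda |c^\pm_{\lambda,T}|d_\lambda$. Evaluating \thref{BSpoly}(iii) at the identity is the natural first attempt, since $m^\lambda_{i,j}(e)=\sqrt{d_\lambda}\delta_{ij}$. It gives
$$\sum_{\lambda\ne0}\sqrt{d_\lambda}\sum_i|a^\pm_{\lambda,i,i}|\ll C_{\partial\Omega}^{1/2}\Psi_q(T).$$
Because $|c^\pm_{\lambda,T}|d_\lambda\le \sqrt{d_\lambda}\sum_i|a^\pm_{\lambda,i,i}|$, this proves (iii). This relies on the sup bound in \thref{BSpoly}(iii) holding for the specific basis of matrix coefficients chosen here. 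That is legitimate, because its proof only uses Bessel's inequality, the band estimate \eqref{eqband}, and the local Weyl law \eqref{eqweyl}, all of which hold for any orthonormal eigenbasis.

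One subtlety needs care. The eigenspaces of $\Delta_g$ may group several $\lambda$ with the same $c(\lambda)$, and the basis in \thref{BSpoly} is an arbitrary eigenbasis. Since we are free to fix the basis in advance, we simply choose the matrix-coefficient basis, and no grouping issue arises. A second point is that the $P_T^\pm$ are real-valued. The average $Q_T^\pm$ is then real as well, but its coefficients $c^\pm_{\lambda,T}$ are complex in general. This is harmless for the statement.
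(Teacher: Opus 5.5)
Your proposal is correct and follows the paper's proof essentially step by step. Both take the polynomials of \thref{BSpoly} in the matrix-coefficient basis and average over conjugation, use Schur orthogonality to get $c^\pm_{\lambda,T}=d_\lambda^{-1/2}\sum_i a^\pm_{\lambda,i,i}$, and obtain (iii) by evaluating the sup bound of \thref{BSpoly}(iii) at the identity, where $m^\lambda_{i,j}(e)=\sqrt{d_\lambda}\,\delta_{ij}$. Your justification of (ii), that the $\lambda=0$ coefficient is untouched by the averaging, is if anything cleaner than the paper's appeal to $L^1$-contractivity.
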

\begin{proof}
For $T\geq 1$, let 
$$P_{T}^\pm=\sum_{\substack{\lambda\in\Lambda_G^+\\c(\lambda)< T^2}}\sum_{1\leq i,j\leq d_\lambda}a_{i,j,T}^{\pm,\lambda}m_{i,j}^\lambda$$ 
be the real-valued spectral polynomials provided by \thref{BSpoly} for $\Omega$ with $d_\lambda:=\dim V_\lambda$. Let 
$$\mathscr{P} : L^2(G)\longrightarrow L^2(G)^G$$
 be the orthogonal projection onto the closed subspace of
conjugation invariant $L^2(G)$ functions, given by 
$$
\mathscr{P}f(g)=\int_G f(h^{-1}gh)\d\mu(h).
$$
Define 
$$
Q^\pm_{T}:=\mathscr{P} P^\pm_{T}=\sum_{\substack{\lambda\in\Lambda_G^+\\c(\lambda)< T^2}}\sum_{1\leq i,j\leq d_\lambda}a_{i,j,T}^{\pm,\lambda}\mathscr{P}m_{ij}^\lambda.
$$
By the unitarity of $\pi_\lambda$, we have $\pi_\lambda(h^{-1})=\pi_\lambda(h)^*$ . Therefore,
$$m_{i,j}^\lambda(h^{-1}gh)=\sqrt{d_\lambda}\sum_{k,l=1}^{d_\lambda}\pi_\lambda(h)_{i,k}\pi_\lambda(g)_{k,l}\overline{\pi_\lambda(h)_{j,l}}$$
where $\pi_{\lambda}(g)_{i,j}=\langle e_i,\pi_\lambda(g)e_j\rangle$  is the matrix coefficient of $(\pi_\lambda, V_\lambda)$ on $G$. By Schur orthogonality \cite{Bump2013}[Theorem 2.4], we have
$$
\mathscr{P}m_{i,j}^\lambda(g)=\int_G  m_{i,j}^\lambda(h^{-1}gh)\d\mu(h)=\sqrt{d_\lambda}\sum_{k,l=1}^{d_\lambda}\pi_\lambda(g)_{k,l}\int_G\pi_\lambda(h)_{i,k}\overline{\pi_\lambda(h)_{j,l}} \d\mu(h)=\frac{\delta_{i,j}}{\sqrt{d_\lambda}}\chi_\lambda(g),
$$
where $\chi_\lambda(g)=\displaystyle\sum_{i=1}^{d_\lambda} \pi_\lambda(g)_{i,i}$. Setting $ c^{\pm}_{\lambda,T}:=\frac{1}{\sqrt{d_\lambda}}\displaystyle\sum_{i=1}^{d_\lambda}a^{\pm,\lambda}_{i,i,T}$  gives
$$Q^\pm_{T}=\displaystyle \sum_{\substack{\lambda\in\Lambda_G^+\\ c(\lambda)<T^2}}c^\pm_{\lambda,T}\chi_{\lambda}.$$
Since $\Omega$ is conjugation-invariant, $\mathbf{1}_\Omega$ is a class function on $G$ and $\mathscr{P}\mathbf{1}_\Omega=\mathbf{1}_\Omega$. Therefore, by applying $\mathscr{P}$ to the inequalities in \thref{BSpoly}-(i), one gets (i). As for (ii), it follows immediately from the second point of \thref{BSpoly} by noting that $\mathcal{P}$ is a contraction on $L^1$. Finally, for (iii), we have
\begin{align*}
 \sum_{0<c(\lambda)< T^2}|c_{\lambda,T}^\pm|d_\lambda
 &\le\sum_{0<c(\lambda)< T^2}\sum_{i,j=1}^{d_\lambda}|a_{i,j,T}^{\pm,\lambda}|\,| m_{i,j}^{\lambda}(e)|
 \nonumber\\
 &\le\sup_{g\in G}\sum_{0<c(\lambda)< T^2}\sum_{i,j=1}^{d_\lambda}|a_{i,j,T}^{\pm,\lambda}|\,|m_{i,j}^\lambda(g)|
 \nonumber\\
 &\ll_G C_{\partial \Omega}^{1/2}\,\Psi_q(T).                   
\end{align*}
Here $e$ denotes the identity element of $G$, and the last inequality follows from \thref{BSpoly}-(iii).
\end{proof}
\section{Conjectural framework}\label{Sec2}
In this section, we record the relevant conjectural framework, following the presentation in \cite{bucur2024}[\S 2] and adapting it to the hybrid Chebotarev-Sato-Tate setting by tensoring with Artin representations.\\
Throughout this section, we fix the following notation. Let $K/F$ be a finite Galois extension of degree $n=[K:F]$ with Galois group $G=\Gal(K/F)$. Let $\mathcal{P}(K/F)$ be the set of rational primes $p$ for which some prime of $F$ above $p$ ramifies in $K$. Define
$$
 M(K/F)=2\,nD_F^{1/n_F}\prod_{p\in\mathcal P(K/F)}p,
$$
where $n_F:=[F:\Q]$ and $D_F$ is the absolute discriminant of $F$.
Let $\A$ be an Abelian variety of dimension $g$ over $F$ with absolute conductor $N:=N_\A$, that is, the absolute norm of the conductor ideal of $\A$. Set
\begin{equation}\label{eqcond}
  {\cal Q}_{\A,K}=N\cdot M(K/F).    
\end{equation}
  
\subsection{Chebotarev-Sato-Tate group}\label{Sec 3.1}
Following the $\ell$-adic construction in \cite{serre2016}[Chapter 8, \S 8.3], we shall describe the Lie group data underlying the hybrid Chebotarev--Sato--Tate distribution. For a rational prime $\ell$, let
$$\rho_{\A,\ell} : G_F\longrightarrow\GSp(V_\ell(\A))$$
be the $\ell$-adic representation attached to $\A$ induced by the action of the absolute Galois group $G_F$ of $F$ on the rational $\ell$-adic Tate module $V_\ell(\A)$. Let $G_\ell^{\rm Zar}$ be the Zariski closure of the image of $\rho_{\A,\ell}$ in $\GSp_{2g}(\Q_\ell)$. Put $G_\ell^{1,\rm Zar}:=G_\ell^{\rm Zar}\cap\Sp_{2g}(\Q_\ell)$. Fix an isomorphism $\iota_\ell:\overline{\Q}_\ell\cong\C$ and set $G^{\mathrm{Zar}}_{\ell,\C}:=G^{\mathrm{Zar}}_{\ell}\otimes_{\mathbb{Q}_\ell,\imath}\C$ and $G^{1,\mathrm{Zar}}_{\ell,\C}:=G^{1,\mathrm{Zar}}_{\ell}\otimes_{\mathbb{Q}_\ell,\iota}\C$. Fix $F\hookrightarrow\C$, and let $\MT(\A)$ and $\operatorname{Hg}(\A)$  be the Mumford--Tate and Hodge groups, associated with the rational Hodge structure on $H_1(\A(\C),\Q)$. In particular, $\operatorname{Hg}(\A)=\MT(\A)\cap\Sp_{2g}$. As in \cite[Section 2.1]{bucur2024}, to avoid a priori dependence of the Lie group data on $\iota_\ell$ and $\ell$, we assume the following conjecture.
\begin{conjecture}[Mumford--Tate conjecture]
\thlabel{conjMT}
For every prime $\ell$,
$$(G_\ell^{\rm Zar})^\circ\cong \MT(\A)\otimes_{\Q}\Q_\ell$$
\end{conjecture}
By \cite{cantoral2022}, the Mumford-Tate conjecture implies the algebraic Sato-Tate conjecture formulated in \cite{BanaszakKedlaya2015}, which can be stated as follows.
\begin{conjecture}[Algebraic Sato--Tate conjecture]\thlabel{conjAST}
There exists an algebraic subgroup $\AST_F(\A)$ of $\Sp_{2g/\Q}$ such that
$$G_\ell^{1,{\rm Zar}}\cong \AST_F(\A)\otimes_{\Q}\Q_\ell$$
for every prime $\ell$. The group $\AST_F(\A)$ is called the algebraic Sato-Tate group.
\end{conjecture}
This allows us to define the Sato-Tate group independently of $\ell$ as follows.
\begin{definition}
We define the Sato-Tate group of $\A$ to be a maximal compact subgroup $\ST_{F}(\A)$ of $\AST_{F}(\A)(\C)$, and the Chebotarev-Sato-Tate group to be $\CST_K(\A):=\ST_{F}(\A)\times G$.\\
\end{definition}

Let $L/F$ be the finite Galois extension cut out by the component representation
$$
G_F \xrightarrow{\rho_{\A,\ell}} G_\ell^{\rm Zar}
\longrightarrow \pi_0(G_\ell^{\rm Zar}).
$$
This is the connectedness field of $\A$; it is independent of $\ell$ and satisfies
$\Gal(L/F)\simeq \ST_F(\A)/\ST_F(\A)^\circ$.
Let $S$ be the union of the set of primes of bad reduction $S_\A$ of $\A$ and those that ramify in $K/F$. For $\p\notin S\cup\{\p :\p\mid\ell\}$, we let $\mathrm{Frob}_\p$ denote an arithmetic Frobenius at $\p$ in $G_F$. Let $g_\p$ denote the image of $\mathrm{Frob}_\p$ by $\rho_{\A,\ell}$.
Let $q_{\p}$ be the order of the residue field $\mathbb{F}_{\p}$ at $\p$. Hence, $\tilde{g}_\p:=g_\p q_{\p}^{-1/2}$ is diagonalizable and has eigenvalues of absolute value $1$. Consequently,  it lies in a compact subgroup of $\AST_F(\A)(\C)$. Every compact subgroup of $\AST_F(\A)(\C)$ is conjugate into a maximal compact subgroup. Thus, $\tilde{g}_{\p}$ is conjugate to an element of $\ST_F(\A)$. Define $x_{\p}$ to be the conjugacy class of $\tilde{g}_{\p}\times \sigma_{\p}$ in the Chebotarev-Sato-Tate group $\CST_K(\A)$, where $\sigma_{\p}$ denotes the Frobenius conjugacy class in $G$. The Haar measure $\mu$ on $\CST_{K}(\A)$ is the product of the Haar measure $\mu_{ST}$ of $\ST_F(\A)$ and the normalized counting measure on $G$. Let $X$ denote the set of conjugacy classes of $\CST_K(\A)$. We then have the following hybrid Chebotarev-Sato-Tate conjecture from \cite{amri2023}[Conjecture 2.2].
\begin{conjecture}[Chebotarev-Sato-Tate Conjecture]\thlabel{conjCST}
Assume that $L$ is linearly disjoint from $K$ over $F$. Then the sequence $(x_{\p})_{\p\notin S}$, with the primes $\p$ ordered by norm, is equidistributed on $X$ with respect to the pushforward of the of normalized Haar measure on $\CST_K(\A)$.
\end{conjecture}
\subsection{Motivic $L$-functions}
Let $V$ denote the standard $2g$-dimensional representation of $\ST_F(\A)$ attached to $\A$. Let
$$\sigma:=\varrho\otimes\rho_\xi: \CST_{K}(\A)\longrightarrow \GL(V_{\chi,\xi})$$
be an irreducible representation of $\CST_K(\A)$, where $\chi \in \Irr(\ST_{F}(\A))$ is the irreducible character of $\varrho$ of degree $d_{\chi}$ and $\xi\in \Irr(G)$ is the irreducible character of $\rho_\xi$ of degree $d_\xi$. Since $V$ is faithful and self-dual, by virtue of \cite{DM1982}[Proposition 2.20(b)], we may choose $w_\chi\in\Z_{\ge0}$, which we shall call the motivic weight of $\chi$, such that $V_\chi\subset V^{\otimes w_\chi}$. Fix a prime number $\ell$ and let
$$\rho_\xi: \Gal(K/F) \to \mathrm{GL}(V_\xi)$$ 
be the Artin representation associated to $\xi$, where $V_\xi$ is a $d_\xi$-dimensional  $\C$-vector space. For each $\ell$, transport back its coefficients through $\iota_\ell$ to obtain an $\overline{\Q}_\ell$-representation of $G$, and inflate it to a $d_\xi$-dimensional $\ell$-adic Galois representation 
$$\rho_{\xi,\ell} :G_F\longrightarrow\GL(V_{\xi,\ell}).$$ 
Thus, the family $\left\{\rho_{\xi,\ell}\right\}_\ell$ is a strictly compatible system of weight $0$ and is unramified outside the finite set of primes ramified in $K/F$. On the other hand, via the Weyl unitary trick \cite{serre2000}[Corollary 2, p 71],  $\varrho$ extends to an algebraic representation $\tilde{\varrho}:G^{1,\rm Zar}_{\ell,\C}\longrightarrow\GL(V_\chi)$. Since $G_{\ell,\mathbb C}^{\mathrm{Zar}}$ contains the scalar
homotheties, multiplication induces a surjective homomorphism
$$
\boldsymbol{m} :
G_{\ell,\ C}^{1,\mathrm{Zar}}\times\mathbf G_{m,\C}
\longrightarrow G_{\ell,\C}^{\mathrm{Zar}},
\qquad
(h,t)\longmapsto th,
$$
whose kernel is $\langle (-I,-1)\rangle$. The inclusion $V_\chi\hookrightarrow V^{\otimes w_\chi}$ gives
$\tilde{\varrho}(-I)=(-1)^{w_\chi}\operatorname{Id}_{V_\chi}$.
Consequently, the algebraic representation
$(h,t)\longmapsto t^{w_\chi}\tilde{\varrho}(h)$ is trivial on $\ker \boldsymbol{m}$.
It therefore descends uniquely to an algebraic representation of $G_{\ell,\mathbb C}^{\mathrm{Zar}}$. Transporting back through $\iota_\ell$ yields an algebraic representation $\widetilde\varrho_\ell$ of $G^{\mathrm{Zar}}_\ell$. Define
$$\rho_{\chi,\ell}:=\tilde{\varrho}_\ell\circ\rho_{\A,\ell}\, : G_F\longrightarrow \GL(V_{\chi,\ell}).$$
Therefore, $V_{\chi,\ell}$ is a $d_\chi$-dimensional constituent of $V_\ell(A)^{\otimes w_\chi}$. Hence, $\rho_{\chi,\ell}$ is unramified and pure of weight outside $S_{\A}\cup\{\p:\p\mid\ell\}$. Thus, one obtains an $\ell$-adic Galois representation
$$
\rho_{\chi,\xi,\ell}:=\rho_{\chi,\ell} \otimes\rho_{\xi,\ell}: G_F \longrightarrow \GL(V_{\chi,\xi,\ell})
$$
where $V_{\chi,\xi,\ell}:=V_{\chi,\ell}\otimes V_{\xi,\ell}$. For each prime $\p$ of $F$ outside $S$, it satisfies
$$
\det\Bigl(1 - \rho_{\chi,\xi,\ell}({\rm Frob}_{\p})\,q_\p^{-w_{\chi}/2} T\Bigr) = \det\bigl(1 - \sigma(x_{\p})\,T\bigr).
$$
 For a prime $\p$ of $F$ and $\ell\ne{\rm char}(\mathbb{F}_\p)$, define the Euler factor of $\chi\otimes\xi$ at $\p$ by
$$
L_{\p}(\chi\otimes\xi,T) :=\det\Bigl(1 - \rho_{\chi,\xi,\ell}({\rm Frob}_{\p})\,q_\p^{-w_{\chi}/2} T \;\Big|\; (V_{\chi,\xi,\ell})_{I_{\p}}\Bigr),
$$
where $I_{\p}$ denotes the inertia subgroup of the decomposition group $G_{\p}$ at ${\p}$ inside the absolute Galois group $G_F$. The polynomials $L_{\p}(\chi\otimes\xi,T)$ do not depend on $\ell$ and have a degree at most $d_{\chi,\xi}=d_\chi d_\xi$. Moreover, all their reciprocal roots have absolute value at most $1$, with equality for $\p$ outside $S$.  Define the motivic $L$-function attached to $\chi\otimes\xi$ as
$$
L(\chi\otimes\xi,s) := \prod_{\p} L_{\p}\bigl(\chi\otimes\xi,q_\p^{-s}\bigr)^{-1},
$$
which converges absolutely for $\Re(s)>1$. Let 
$$
B_{\chi,\xi} :=  D_F^{d_{\chi,\xi}} N_{\chi,\xi},
$$
where $N_{\chi,\xi}:=\displaystyle \prod_{\p}N\p^{f_{\chi,\xi}(\p)}$ is the norm of the conductor of $\rho_{\chi,\xi,\ell}$. Here, $f_{\chi,\xi}(\p)$ is the local conductor exponent see \cite{serre1970}[\S 2].  
Let $\kappa_{\chi,\xi,j}\in \Q$ be the parameters of the normalized archimedean factors, constructed as in \cite{serre1970}[\S 3], satisfying $0\leq\kappa_{\chi,\xi,j}\leq1+w_\chi/2$  for $j=1,\dots,n_F\,d_{\chi,\xi}$. Define the completed $L$-function as
$$
\Lambda(\chi\otimes\xi,s) := B_{\chi,\xi}^{s/2}\, L(\chi\otimes\xi,s)\, \Gamma(\chi\otimes\xi,s)\quad\text{where}\quad\Gamma(\chi\otimes\xi,s) :=\prod_{j=1}^{n_{\chi,\xi}} \Gamma_{\R}(s + \kappa_{\chi,\xi,j}),
$$
with $\Gamma_\R(s)=\pi^{-s/2}\Gamma(s/2)$ and $n_{\chi,\xi}:=n_F d_{\chi,\xi}$. Put
$$
\delta_\chi:=\dim_{\mathbb C}V_\chi^{\ST_F(\A)},
\qquad
\delta_\xi:=\dim_{\mathbb C}V_\xi^{G}.
$$
Then
$$
\delta_{\chi,\xi}:=\dim_{\mathbb C}(V_\chi\otimes V_\xi)^{\ST_F(\A)\times G}=\delta_\chi\delta_\xi.
$$
Since $\chi$ and $\xi$ are irreducible, $\delta_{\chi,\xi}=1$ precisely when both characters are trivial, and $\delta_{\chi,\xi}=0$ otherwise.
\begin{conjecture}\thlabel{conjGRH}
Suppose that $L$ is linearly disjoint
from $K$ over $F$.  For an irreducible character $\chi\otimes\xi$  of $\CST_{K}(\A)$, the following hold.
\begin{enumerate}[label=(\roman*)]
\item The function $s^{\delta_{\chi,\xi}}(s-1)^{\delta_{\chi,\xi}}\,\Lambda(\chi\otimes\xi,s)$ extends to an entire function of order 1 that does not vanish at $s=0,1$.
\item There exists $\varepsilon_{\chi\otimes\xi} \in \C$ with $|\varepsilon_{\chi\otimes\xi}|=1$ such that 
$$
\Lambda(\chi\otimes\xi,s) = \varepsilon_{\chi\otimes\xi}\, \Lambda((\chi\otimes\xi)^{\vee},1-s),
$$
for all $s \in \C$, where $(\chi\otimes\xi)^{\vee}$ is the character of the contragredient representation of $\chi\otimes\xi$.
\item Every zero of the completed $L$-function has real part $1/2$.
\end{enumerate}
\end{conjecture} 
The following hybrid version of \cite{bucur2024}[Proposition 2.5] will play a central role in the next section. 
\begin{pro}\thlabel{Murtyestim}
Assume that \thref{conjGRH} holds for the irreducible character $\chi\otimes\xi$ of $\CST_K(\A)$. For every $x\ge 2$, we have
$$
\sum_{\substack{N\p\le x\\\p\notin S}}\chi\otimes\xi(x_\p)\,\log N\p=\delta_{\chi,\xi}\,x+O_{F,g}\left(d_{\chi,\xi}\,x^{1/2}(\log x)\log\left({\cal Q}_{\A,K}(x+w_{\chi})\right)\right).
$$
Abel summation then gives 
$$
\sum_{\substack{N\p\le x\\
\p\notin S}}
\chi\otimes\xi(x_\p)
=\delta_{\chi,\xi}\,\mathrm{Li}\,x+O_{F,g}\left(d_{\chi,\xi}\,x^{1/2}\log\left({\cal Q}_{\A,K}(x+w_{\chi})\right)
\right).
$$
\end{pro}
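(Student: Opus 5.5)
The argument follows the classical explicit-formula approach, essentially Murty's proof of effective Chebotarev and Sato--Tate under GRH and its use in \cite{bucur2024}[Proposition 2.5]. The difference is that the analytic conductor of the twisted $L$-function now also involves the Artin part. Write $\Lambda_{\chi,\xi}(\mathfrak a)$ for the coefficients of $-L'/L(\chi\otimes\xi,s)$. At $\mathfrak a=\p^k$ this coefficient is $\log N\p$ times the sum of the $k$-th powers of the reciprocal roots of $L_\p(\chi\otimes\xi,T)$. We then set
$$\psi(x,\chi\otimes\xi):=\sum_{N\mathfrak a\le x}\Lambda_{\chi,\xi}(\mathfrak a).$$
For $\p\notin S$ the first-power coefficient equals $\chi\otimes\xi(x_\p)\log N\p$, because the determinant identity $\det(1-\rho_{\chi,\xi,\ell}(\mathrm{Frob}_\p)q_\p^{-w_\chi/2}T)=\det(1-\sigma(x_\p)T)$ identifies the normalized Frobenius eigenvalues with those of $\sigma(x_\p)$. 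The proof therefore has two parts. First, an estimate $\psi(x,\chi\otimes\xi)=\delta_{\chi,\xi}x+O(\cdots)$ from GRH. Second, a comparison between $\psi$ and the prime sum.

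For the first part we use \thref{conjGRH}(i)--(iii). The function is entire of order one, satisfies the functional equation, and has all its zeros on $\Re s=1/2$. The standard truncated explicit formula (Iwaniec--Kowalski Theorem 5.15 style) then gives
$$\psi(x,\chi\otimes\xi)=\delta_{\chi,\xi}x+O\bigl(x^{1/2}(\log x)\log(x^{n_{\chi,\xi}}\mathfrak q(\chi\otimes\xi))\bigr),$$
where the analytic conductor is
$$\mathfrak q(\chi\otimes\xi)=B_{\chi,\xi}\prod_j(|\kappa_{\chi,\xi,j}|+3).$$
The key analytic inputs are the zero-counting bound $N(T+1)-N(T)\ll\log(\mathfrak q (T+3)^{n_{\chi,\xi}})$ and control of the possible zero at $s=0$. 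Since $\kappa_{\chi,\xi,j}\le 1+w_\chi/2$ and $n_{\chi,\xi}=n_Fd_{\chi,\xi}$, the archimedean part contributes $\ll n_Fd_{\chi,\xi}\log(w_\chi+3)$.

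The step I expect to be the main obstacle is bounding $\log B_{\chi,\xi}$ by $O_{F,g}(d_{\chi,\xi}\log\mathcal Q_{\A,K})$. The discriminant part gives $d_{\chi,\xi}\log D_F$. For the conductor, the plan is to bound each local exponent $f_{\chi,\xi}(\p)$ for $\p\in S$. The tame part is at most $d_{\chi,\xi}$. For the wild part, we bound the Swan conductor of $\rho_{\chi,\ell}\otimes\rho_{\xi,\ell}$ following Serre's estimate of conductors of Artin characters (the same argument giving $\log D_K\le n\log M(K/F)$ in Murty--Murty--Saradha). This uses that the image of wild inertia acts through a $p$-group whose breaks are controlled by those of $V_\ell(\A)$ and of $K/F$. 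Breaks of a tensor product are at most the maximum of the breaks of the factors, and $V_\chi\subset V^{\otimes w_\chi}$ has breaks no larger than those of $V$. This yields
$$f_{\chi,\xi}(\p)\ll d_{\chi,\xi}\bigl(1+f_\A(\p)+\text{(break of }K/F\text{ at }\p)\bigr).$$
Summing over $\p$, with the Artin part bounded through $\log(nD_F^{1/n_F}\prod p)$ as in Serre's Proposition 5, gives
$$\log N_{\chi,\xi}\ll_{F,g}d_{\chi,\xi}\bigl(\log N+\log M(K/F)\bigr).$$
The resulting factor $\log(x+w_\chi)$ absorbs the archimedean term.

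For the second part we bound the difference between $\psi$ and the prime sum. Prime powers $\p^k$ with $k\ge2$ contribute at most $d_{\chi,\xi}$ times the sum of $\log N\p$ over $N\p^k\le x$, since all reciprocal roots have absolute value at most $1$. This is $\ll_F d_{\chi,\xi}x^{1/2}$. The bad primes $\p\in S$ contribute at most $d_{\chi,\xi}\sum_{\p\in S}\log N\p\,(\log x/\log N\p)$. This is $\ll d_{\chi,\xi}(\log x)\,\omega(\mathcal Q)\log x$, and is absorbed by $d_{\chi,\xi}x^{1/2}\log\mathcal Q$. Finally, partial summation against $1/\log t$ from $2$ to $x$ converts the weighted sum into the unweighted one. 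The main term becomes $\delta_{\chi,\xi}\mathrm{Li}\,x$ up to $O(1)$. The error term's integral $\int_2^x t^{1/2}\log(\mathcal Q t)\,\d t/(t\log^2t)$ is dominated by the endpoint, giving $x^{1/2}\log(\mathcal Q_{\A,K}(x+w_\chi))$.
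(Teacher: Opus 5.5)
Your proposal is correct. Its analytic skeleton is the same as the paper's, which simply cites Murty's Proposition 4.1: the GRH explicit formula for $\psi(x,\chi\otimes\xi)$ with the analytic conductor, removal of prime powers and of the primes in $S$, then partial summation.

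Where you genuinely differ is the conductor estimate $\log B_{\chi,\xi}\ll_{F,g} d_{\chi,\xi}\log\mathcal Q_{\A,K}$.

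\textbf{The paper's route.} It never looks at breaks. It uses the inclusion $V_{\chi,\ell}^{G^u}\otimes V_{\xi,\ell}^{G^u}\subseteq (V_{\chi,\ell}\otimes V_{\xi,\ell})^{G^u}$ to get
$\Codim(V_{\chi,\ell}\otimes V_{\xi,\ell})^{G^u}\le d_\xi\Codim V_{\chi,\ell}^{G^u}+d_\chi\Codim V_{\xi,\ell}^{G^u}$.
Integrating over $u$ gives $f_{\chi,\xi}(\p)\le d_\xi f_\chi(\p)+d_\chi f_\xi(\p)$, hence $B_{\chi,\xi}\le B_\chi^{d_\xi}B_\xi^{d_\chi}D_F^{-d_{\chi,\xi}}$. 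It then imports two existing single-factor bounds: $\log B_\chi\ll_{F,g} d_\chi\log 2N$ from Bucur--Fit\'e--Kedlaya, and $\log B_\xi\ll n_F d_\xi\log M(K/F)$ from Hensel/Serre.

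\textbf{Your route.} You bound the Swan conductor of the tensor product directly by $d_{\chi,\xi}$ times its top upper break. That break is at most the larger of the top breaks of $V_\ell(\A)$ and of $K/F$ at $\p$. In effect this proves both single-factor bounds at once; in particular, it is what makes the motivic part independent of $w_\chi$.

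What each buys: the paper's argument is a short, modular reduction that works with the actual conductors $f_\chi,f_\xi$. Yours is self-contained but relies on ramification-break theory, and it is not weaker in the final estimate.

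\textbf{A step to make explicit.} You need that the top break of $V_\ell(\A)$ at $\p$ is at most $\mathrm{Sw}(V_\ell(\A))\le f_\A(\p)$. This holds because $\Codim V_\ell(\A)^{G^u}\ge 1$ for every $u$ below the top break. It is exactly what turns breaks into $f_\A(\p)$ in your displayed local bound.

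\textbf{Two harmless slips.}
\begin{enumerate}
\item The contribution of the primes in $S$ is at most $d_{\chi,\xi}\,|S|\log x$, with $|S|\ll_F\log\mathcal Q_{\A,K}$. Your bound carries an extra factor $\log x$.
\item In the partial summation the integrand is $t^{-1/2}\log(\mathcal Q_{\A,K}(t+w_\chi))/\log t$, since the error term in $\psi$ carries a factor $\log t$.
\end{enumerate}
Neither affects the conclusion.
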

 The proof of the above proposition follows the same pattern as in \cite{Murty1985}[Proposition 4.1]. It involves the following conductor estimate.
\begin{lem}\thlabel{lemconductor} We have
$$\log B_{\chi,\xi} \ll_{F,g} d_{\chi,\xi}\log{\cal Q}_{\A,K}.$$
\end{lem}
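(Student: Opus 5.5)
We have to bound $\log B_{\chi,\xi}=d_{\chi,\xi}\log D_F+\log N_{\chi,\xi}$. Since $F$ is fixed, the first term is $O_F(d_{\chi,\xi})$, and this is $\ll_{F} d_{\chi,\xi}\log{\cal Q}_{\A,K}$ because ${\cal Q}_{\A,K}\ge M(K/F)\ge 2$. The real work is bounding the conductor norm
$$\log N_{\chi,\xi}=\sum_{\p} f_{\chi,\xi}(\p)\log N\p .$$
Only primes $\p$ lying over $\mathcal P(K/F)$ or over primes of bad reduction of $\A$ can contribute, since $\rho_{\chi,\xi,\ell}$ is unramified elsewhere (choose $\ell$ different from the residue characteristic at each $\p$; the conductor exponent is independent of $\ell$).

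The plan is to bound $f_{\chi,\xi}(\p)$ locally, splitting it into its tame part and its Swan part:
$$f_{\chi,\xi}(\p)=\dim(V/V^{I_\p})+\mathrm{Sw}_\p(V),\qquad V=V_{\chi,\ell}\otimes V_{\xi,\ell}.$$
The tame part is at most $d_{\chi,\xi}$. For the Swan part, we use the fact that the Swan conductor is controlled by the largest upper ramification break $u_\p$ of the wild inertia acting on $V$: $\mathrm{Sw}_\p(V)\le d_{\chi,\xi}\,u_\p$.

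The wild inertia acts on $V$ through the image of $\rho_{\A,\ell}\times\rho_{\xi,\ell}$. Its upper breaks are therefore bounded by the maximum of two quantities. The first is the breaks of $\rho_{\A,\ell}$, which are bounded in terms of $g$ and $n_F$ (Brumer–Kramer / Serre–Tate: conductor exponents of $\A$ at $\p\mid p$ are $\ll_{g,n_F}$ with $p\le 2g+1$ for wild ramification). The second is the breaks of $K/F$ at $\p$, which Serre's bound (\cite{serre1970}-type, Hensel's estimate) gives as $\ll v_\p(p)\,(1+\log_p n)$. Hence
$$f_{\chi,\xi}(\p)\ll_{F,g} d_{\chi,\xi}\bigl(1+\log n/\log p\bigr)\quad\text{for } \p\mid p\in\mathcal P(K/F),$$
and $f_{\chi,\xi}(\p)\ll_{F,g} d_{\chi,\xi}$ at primes of bad reduction.

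Summing over $\p$, with at most $n_F$ primes above each rational $p$ and $\log N\p\le n_F\log p$, gives
$$\log N_{\chi,\xi}\ll_{F,g} d_{\chi,\xi}\Bigl(\sum_{p\in\mathcal P(K/F)}(\log p+\log n)+\sum_{p\mid N}\log p\Bigr).$$
It remains to absorb the terms into $\log{\cal Q}_{\A,K}$:
\begin{itemize}
\item $\sum_{p\in\mathcal P}\log p$ is at most $\log M(K/F)$.
\item $\sum_{p\mid N}\log p$ is at most $\log N$.
\item For the $\log n$ term, note that $|\mathcal P(K/F)|\log 2\le\sum\log p$, so $|\mathcal P|\log n\le\log M(K/F)\cdot\log n/\log 2$. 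This is not linear.
\end{itemize}

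This last point is the main obstacle. To fix it, I would instead use the sharper form of Serre's bound, $\sum_{\p\mid p}$ (contributions of $K/F$) $\le n_F(\log p+\log n)$ only at the actual wild primes $p\le n$, paired with $v_p$ weighting as in Murty–Murty–Saradha. Concretely, I would use the standard inequality
$$\log N_{\xi}\le d_\xi\, n_F\Bigl(\sum_{p\in\mathcal P}\log p+|\mathcal P|\log n\Bigr)$$
together with $\log D_K\le (n-1)\log D$-type arguments. In the end I would rely on the known estimate (Murty–Murty–Saradha, Prop. 2.5 style) that the Artin conductor satisfies $\log N_\xi\ll d_\xi\, n_F\log M(K/F)$, which is exactly why $M(K/F)$ includes the factor $n$.

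Finally, I would combine the pieces via subadditivity of conductors of a tensor product for tame plus wild parts:
$$f_{\chi,\xi}(\p)\le d_\xi f_\chi(\p)+d_\chi f_\xi(\p).$$
The needed bound $f_\chi(\p)\ll_{g,n_F} d_\chi$ for the $\A$-part follows since $V_{\chi,\ell}\subset V_\ell(\A)^{\otimes w_\chi}$. There the breaks are those of $\A$, hence bounded independently of $w_\chi$. Together with the Artin bound for $\xi$, this yields
$$\log B_{\chi,\xi}\ll_{F,g} d_{\chi,\xi}\log{\cal Q}_{\A,K}.$$
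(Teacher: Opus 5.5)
Your final argument is the paper's own: the local inequality $f_{\chi,\xi}(\p)\le d_\xi f_\chi(\p)+d_\chi f_\xi(\p)$ is combined with the Hensel/Murty--Murty--Saradha bound $\log B_\xi\ll n_F d_\xi\log M(K/F)$ and with $\log B_\chi\ll_{F,g} d_\chi\log 2N$, which the paper cites from Bucur--Fit\'e--Kedlaya and you derive from the boundedness of the upper ramification breaks of $V_\ell(\A)$; the $\log n$ ``obstacle'' in your first attempt only arises because you used $\log_p n$ rather than $v_p(n)$ in the break bound for $K/F$, since $\sum_{p\mid n}v_p(n)\log p=\log n$. The one step the paper actually proves, and which you only assert, is the tensor inequality, and it follows by integrating $\Codim(V_{\chi,\ell}\otimes V_{\xi,\ell})^{G^u}\le d_\xi\Codim V_{\chi,\ell}^{G^u}+d_\chi\Codim V_{\xi,\ell}^{G^u}$ in the upper-numbering formula for the conductor exponent, where this codimension bound is a consequence of $V_{\chi,\ell}^{G^u}\otimes V_{\xi,\ell}^{G^u}\subseteq(V_{\chi,\ell}\otimes V_{\xi,\ell})^{G^u}$.
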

\begin{proof}
Let  $B_\xi=D_F^{d_\xi}N_\xi$ and $B_\chi:=D_F^{d_\chi}N_\chi$, where $N_\chi$ (resp. $N_\xi$) is the norm of the conductor of $\rho_{\chi,\ell}$ (resp. $\rho_{\xi,\ell}$).
 By Hensel's estimate \cite{wong2019}[\S 2.4, (2.5)–(2.7)], one has
\begin{equation}\label{eqhensel}
 \log B_\xi
       \ll n_Fd_\xi\log M(K/F),
\end{equation}
with an absolute implied constant. On the other hand, by \cite{bucur2024}[Remark 2.8]
\begin{equation}\label{eqbfkcond}
    \log B_\chi\ll_{F,g} d_\chi\log 2N.
\end{equation}
For $u\ge -1$, let $G^u$ denote the upper ramification groups of the local absolute Galois group $G_{F_\p}$.
The inclusion
$V_{\chi,\ell}^{G^u}\otimes V_{\xi,\ell}^{G^u}\subseteq(V_{\chi,\ell}\otimes V_{\xi,\ell})^{G^u}$ implies
$$
\Codim(V_{\chi,\ell}\otimes V_{\xi,\ell})^{G^u}
\leq d_\xi \,\Codim V_{\chi,\ell}^{G^u}+d_\chi\, \Codim V_{\xi,\ell}^{G^u}.     
$$
Integrating this inequality in the upper-numbering formula for the Artin
conductor gives 
$$
 f_{\chi,\xi}(\p)\leq d_\xi f_\chi(\p)+d_\chi f_\xi(\p).
$$
Therefore,  $ N_{\chi,\xi}\leq N_\chi^{d_\xi}N_\xi^{d_\chi}$. Thus,
$$ B_{\chi,\xi}=D_F^{d_{\chi,\xi}}N_{\chi,\xi}
 \leq B_\chi^{d_\xi}N_\xi^{d_\chi}
  =\frac{B_\chi^{d_\xi}B_\xi^{d_\chi}} {D_F^{d_{\chi,\xi}}}.$$
Combining \eqref{eqhensel} and \eqref{eqbfkcond} gives the desired bound.
\end{proof}
\paragraph{Convention.} Throughout the rest of the paper, we adopt the convention that all sums involving the classes $x_\p$ or $\tilde{g}_\p$ are taken over primes $\p$ outside $S$ and $S_\A$ respectively.\\
Next, we record an analogue of the motivic weight estimate \cite{bucur2024}[Proposition 3.2(iii)] in the form needed below.
\begin{pro}\thlabel{wieghtbound}
Assume that $\ST_F(\A)$ is connected. Fix a bi-invariant Riemannian metric and a maximal torus $T$, and let $\mathfrak{t}$ be its Lie algebra. Then, the $\ell$-adic representations $\rho_{\chi_\lambda,\ell}$ constructed above for the irreducible characters of $\ST_F(\A)$ of highest weights $\lambda$ can be chosen with purity weights satisfying
    $$w_\lambda\le \varsigma\,\|\lambda\|$$
    where $\|\cdot\|$ is the dual norm on $\mathfrak{t}^*$ induced by the metric. The constant $\varsigma$ is independent of $\lambda$ and $\ell$
and depends on $\ST_F(\A)$, the metric, the standard representation,
and fixed finite tensor choices.
\end{pro}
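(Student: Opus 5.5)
We prove the bound $w_\lambda\le\varsigma\|\lambda\|$ by building each irreducible representation of $\ST_F(\A)$ inside a tensor power of $V$ whose degree grows linearly in $\|\lambda\|$. The motivic weight is only defined as some $w_\chi$ with $V_\chi\subset V^{\otimes w_\chi}$, so we are free to choose it. The task is to exhibit, for each dominant weight $\lambda$, such an embedding with $w$ linear in $\|\lambda\|$. Once $V_{\chi_\lambda}\subset V^{\otimes w}$, the construction of Section~\ref{Sec2} yields $\rho_{\chi_\lambda,\ell}$ as a constituent of $V_\ell(\A)^{\otimes w}$, pure of weight $w$. The choice is independent of $\ell$ because it is made on the group $\ST_F(\A)$, which by \thref{conjMT} and \thref{conjAST} is independent of $\ell$.

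Step 1: finitely many generating representations. Since $\ST_F(\A)=:H$ is connected and compact, the monoid $\Lambda^+_H$ of dominant weights is finitely generated. We fix generators $\lambda_1,\dots,\lambda_m$. Since $V$ is faithful, every irreducible representation occurs in some $V^{\otimes a}\otimes (V^\vee)^{\otimes b}$. As $V$ is self-dual, this is $V^{\otimes(a+b)}$. We therefore fix, once and for all, integers $w_i$ with $V_{\lambda_i}\subset V^{\otimes w_i}$; these are the ``fixed finite tensor choices''. The central torus needs no separate treatment: it is covered by the same generators, since $\Lambda^+_H$ includes the characters of the center.

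Step 2: Cartan products. We write $\lambda=\sum n_i\lambda_i$ with $n_i\in\Z_{\ge0}$. Then $V_\lambda$ occurs in $\bigotimes_i V_{\lambda_i}^{\otimes n_i}$, because the tensor product of highest weight vectors is a highest weight vector of weight $\lambda$. Hence $V_\lambda\subset V^{\otimes w}$ with $w=\sum n_iw_i$.

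Step 3: controlling the coefficients. This is the main obstacle. We need a decomposition with $\sum n_i\ll\|\lambda\|$, uniformly in $\lambda$. Two cases arise.

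\emph{Simplicial cone.} If $H$ is semisimple and simply connected, the fundamental weights form a basis of the cone. Then $n_i=\langle\lambda,\alpha_i^\vee\rangle$, which is bounded by $C\|\lambda\|$ because the $\alpha_i^\vee$ are fixed vectors; this gives $w\le(\max_i w_i)\,C\|\lambda\|$.

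\emph{General reductive case.} The lattice $\Lambda_H$ need not be a direct sum of fundamental weights and the center. We first pass to a finite cover $\tilde H=Z^\circ\times H_{\mathrm{der}}^{\mathrm{sc}}$, whose dominant cone is simplicial modulo the central part. For the central part we write the weight in a lattice basis of $X^*(Z^\circ)$, using both signs; this is allowed because $V$ is self-dual, so negative exponents can be realized with duals.

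Passing to the cover creates a difficulty: a representation of $H$ is realized only virtually through generators of $\tilde H$ that may not descend to $H$. To handle this, we plan to use a Gordan-type lemma. The monoid $\Lambda^+_H$ is the lattice points of a rational polyhedral cone, so it has a Hilbert basis. Moreover, every element is a sum of a bounded element, lying in the fundamental parallelepiped, and a nonnegative integer combination of the cone's extremal lattice generators. The coefficients in that combination are linear functionals in $\lambda$, hence bounded by $C\|\lambda\|$. The bounded remainder contributes $O(1)$. Since $\lambda\neq0$ has $\|\lambda\|$ bounded below on the lattice, this is absorbed into $\varsigma\|\lambda\|$.

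We take $\varsigma$ to be the maximum of the $w_i$ times these norm constants. It then depends only on $H$, the metric (through $\|\cdot\|$), $V$, and the fixed choices $w_i$, as stated.
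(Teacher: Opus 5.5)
Your proposal is correct. Once the detour through the cover $\tilde H$ is dropped, it is essentially the paper's argument: the paper writes $\lambda=\nu_\lambda+\sum_i a_i(e\omega_i)+\sum_j b_j\eta_j$, where $a_i=\lfloor e^{-1}\langle\lambda,\alpha_i^\vee\rangle\rfloor$ and the $\eta_j$ form a lattice basis of $\Lambda_0$ used with both signs, so the dominant remainder $\nu_\lambda$ ranges over a finite set; this is exactly your fundamental-parallelepiped decomposition made explicit, and it works because the dominant chamber is simplicial modulo its lineality space $\Lambda_0\otimes\R$. The Cartan-component step, the fixed finite tensor choices, and absorbing the bounded remainder via a lower bound for $\|\lambda\|$ on nonzero lattice points then match the paper.
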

\begin{proof}
 By Tannakian duality and complete reducibility \cite{DM1982}[Proposition 2.20(b)], every irreducible representation of $\ST_F(\A)$ with highest weight $\lambda$ occurs as a constituent of some tensor power $V^{\otimes^{w_\lambda}}$. We shall choose these tensor powers quantitatively. 
 
 Fix a root system $\Phi:=\Phi(\ST_F(\A),T)$ of $\ST_F(\A)$ with respect to $T$ of rank $r$. Choose simple roots $\alpha_1,\dots,\alpha_r\in\Phi$. Let $\Lambda:=X^{*}(T)\subset\mathfrak{t}^*$ be the integral weight lattice, and let  
 $$\Lambda_0:=\{\mu\in\Lambda\; :\; \langle \mu,\alpha_{i}^\vee\rangle=0\; \text{for all}\; i\},$$
 where $\langle\cdot,\cdot\rangle :\Lambda\times\Lambda^\vee\longrightarrow\Z$ is the natural pairing.
 Let $\mathcal{V}_{ss}$ be the $\R$-span of $\Phi$. Then, by \cite{springer1998}[\S 7.4,  7.4.2, p 124], we have
 $$\mathfrak{t}^{*}=\mathcal{V}_{ss}\oplus(\Lambda_0\otimes\R).$$
Choose $(\omega_i)_{1\le i\le r}$ fundamental weights in ${\cal V}_{ss}$, and a lattice basis $(\eta_j)_{1\le j\le s}$ of $\Lambda_0$. Since the fundamental weights are rational linear combinations of the roots. Then, there exists an integer $e\ge 1$ such that $e\omega_i\in\Lambda$ for all $1\le i\le r$. It then follows that
$$
\mathfrak{t}^*=\bigoplus_{i=1}^r\R e\omega_i\oplus\bigoplus_{j=1}^s\R\eta_j.
$$
For $\mu:=\dsum_i u_i(e\omega_i)+\dsum_j v_j\eta_j\in \mathfrak{t}^*$, define the coordinate norm
$$\|\mu\|_1:=\dsum_i |u_i|+\dsum_j|v_j|.$$ 
By equivalence of norms, we have $\|\mu\|_1\le C_0\|\mu\|$ for some $C_0>0$. For a dominant weight $\lambda$ of $\ST_F(\A)$ write
$$
\lambda=\sum_i c_i(e\omega_i)+\sum_j t_j\eta_j,
$$
with $c_i=e^{-1}\langle \lambda,\alpha_i^{\vee}\rangle\ge 0$. Set $a_i=\lfloor c_i\rfloor$, $b_j=\operatorname{sgn}(t_j)\lfloor|t_j|\rfloor$,
and put
$$
\nu_\lambda
=\lambda-\sum_i a_i(e\omega_i)-\sum_j b_j\eta_j.
$$
Then $v_\lambda\in\Lambda$ and $\langle v_\lambda,\alpha_i^\vee\rangle=e(c_i-a_i)\ge 0$. Moreover, its coordinates satisfy $0\le c_i-a_i< 1$ and $|t_j-b_j|<1$. Therefore, the $v_\lambda$'s form a finite set $\mathcal R$ of dominant weights. Set 
$$\mathcal{F}:=\mathcal{R}\cup\{e\omega_i\; :\; 1\le i\le r\}\cup\{\pm\eta_j\; :\; 1\le j\le s\}.$$
Note that every weight in $\mathcal{F}$ is integral and dominant. For each $\gamma\in\mathcal F$, fix an occurrence of $V_\gamma$ as a constituent of some tensor power $V^{\otimes n_\gamma}$ for some $n_\gamma\in\mathbb Z_{\ge0}$,
take $n_0=0$.  By highest weight theory, $V_\lambda$ is the Cartan constituent of
$$
V_{\nu_\lambda}\otimes
\bigotimes_i V_{e\omega_i}^{\otimes a_i}\otimes
\bigotimes_{b_j\ne0}
V_{\operatorname{sgn}(b_j)\eta_j}^{\otimes |b_j|}.  
$$
Whence, $V_\lambda$ occurs in $V^{\otimes w_\lambda}$ with
$$
 w_\lambda=n_{\nu_\lambda}+\sum_i a_i n_{e\omega_i}+\sum_{j:b_j\ne0}|b_j| n_{\operatorname{sgn}(b_j)\eta_j}.
$$
Consequently, we have
$$w_\lambda\le C\left(1+\|\lambda\|_1\right)\quad\text{with}\quad C=\displaystyle\max_{\gamma\in\mathcal F}n_\gamma.$$
Since $\Lambda$ is a lattice, there exists $\delta>0$ such that every nonzero $\lambda\in\Lambda$ satisfies $\|\lambda\|\ge\delta$. Hence, $$w_\lambda\le C(\delta^{-1}+C_0)\|\lambda\|.$$
\end{proof}
\section{Arithmetic applications}\label{Sec3}
In this section, we derive effective Sato-Tate and Chebotarev-Sato-Tate estimates using the spectral polynomials in Section \ref{Sec1} under the conjectural framework described in Section \ref{Sec2}.
\subsection{Effective Sato-Tate distribution}
We begin by recalling the notation from Subsection \ref{liegroups} that will be used throughout this section. Let $\A$ be an Abelian variety over a number field $F$ with a connected Sato-Tate group $\ST_F(\A)$.  Equip $\ST_F(\A)$ with the corresponding bi-invariant Riemannian metric and normalized Haar measure $\mu_{\ST}$. Let $T$ be a maximal torus of $\ST_F(\A)$, and let  $r$ denote its dimension, which is the rank of $\ST_F(\A)$. Let $\Phi(\ST_F(\A),T)$ be the associated root system. Choose a system of positive roots $\Phi^+$, and let $s$ denote its number $|\Phi^+|$. For $0\le q\le d-1$, put
\begin{equation}\label{eqexpoST}
    \kappa:=\dfrac{2(d-q)}{2d-q}
\end{equation}
Write $\Lambda$ for the weight lattice, and $\Lambda^+\subset\Lambda$ for the subset of dominant weights. Let
$$
\rho=\dfrac{1}{2}\sum_{\alpha\in \Phi^+}\alpha
$$
be the Weyl vector. Let $c(\lambda):=\|\lambda+\rho\|^2-\|\rho\|^2$ be the Casimir eigenvalue attached to $\lambda$. For $\lambda\in\Lambda^+$, we have
\begin{equation}\label{eqcas}
    \|\lambda\|^2\le c(\lambda)
\end{equation}
We can now state our effective Sato–Tate estimate.
\begin{thm}\thlabel{EffST}
Let $F$ be a number field, let $\A/F$ be an Abelian variety with a connected Sato–Tate group $\ST_F(\A)$. Let $\Omega\subset \ST_F(\A)$ be a measurable conjugation-invariant subset satisfying $\overline{\mathcal{M}}^q_g(\partial \Omega)<\infty$, with $0< q\le d-1$ and $d=\dim\ST_F(\A)$. Assume that \thref{conjMT} holds for $\A$, and \thref{conjGRH} holds for every character of $\ST_F(\A)$. Then, for sufficiently large $x$
$$\#\{N\p\le x \; :\; \; \p\notin S_\A, \;\;\tilde{g}_\p\in \Omega\}=\mu_{\ST}(\Omega)\pi_F(x)+O\left( C_{\partial \Omega}\dfrac{x^{1-\frac{\kappa}{2}}(\log N x)^{\kappa}}{(\log x)^{1-\kappa}}\right)$$
 where $N$ is the absolute conductor of $\A$, $C_{\partial \Omega}$ is defined in \eqref{eqboundary}, and $\kappa$ is defined in \eqref{eqexpoST}. The
implied constant depends only on the geometry of $\ST_F(\A)$, the field $F$, and the Abelian variety $\A$.
\end{thm}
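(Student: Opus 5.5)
The plan is to sandwich $\one_\Omega$ between the character polynomials $Q^\pm_T$ of \thref{BScor1} (applied with $G=\ST_F(\A)$, which is connected), sum them over Frobenius classes, and insert the prime-counting estimate \thref{Murtyestim} (with $K=F$, so $G$ trivial, $\xi$ trivial and ${\cal Q}_{\A,K}\asymp_F N$) for each irreducible character. Concretely, since $Q^-_T\le\one_\Omega\le Q^+_T$ pointwise, we get
$$\sum_{N\p\le x}Q^-_T(\tilde g_\p)\le\#\{N\p\le x:\tilde g_\p\in\Omega\}\le\sum_{N\p\le x}Q^+_T(\tilde g_\p).$$
Expanding $Q^\pm_T=\sum_{c(\lambda)<T^2}c^\pm_{\lambda,T}\chi_\lambda$, the trivial character contributes $c^\pm_{0,T}\pi_F^{S_\A}(x)$. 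Each nontrivial $\lambda$ contributes $O\bigl(|c^\pm_{\lambda,T}|d_\lambda x^{1/2}\log(N(x+w_\lambda))\bigr)$ by \thref{Murtyestim}, with $\delta_\lambda=0$.

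Next we control the weights. By \thref{wieghtbound} and \eqref{eqcas}, $w_\lambda\le\varsigma\|\lambda\|\le\varsigma\sqrt{c(\lambda)}<\varsigma T$. Since $T$ will be a small power of $x$, $\log(N(x+w_\lambda))\ll\log(Nx)$ uniformly. Summing with \thref{BScor1}(iii) (here $q>0$) gives a total nontrivial contribution
$$\ll C_{\partial\Omega}^{1/2}T^{q/2}x^{1/2}\log(Nx)\le C_{\partial\Omega}T^{q/2}x^{1/2}\log(Nx),$$
using $C_{\partial\Omega}\ge1$. The main term is handled by \thref{BScor1}(ii): $c^\pm_{0,T}=\mu_{\ST}(\Omega)+O(C_{\partial\Omega}T^{-(d-q)})$. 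Combined with $\pi_F^{S_\A}(x)=\pi_F(x)+O_{\A}(1)$ and $\pi_F(x)\ll x/\log x$, the count equals
$$\mu_{\ST}(\Omega)\pi_F(x)+O\Bigl(C_{\partial\Omega}\Bigl(\frac{x}{T^{d-q}\log x}+T^{q/2}x^{1/2}\log(Nx)\Bigr)\Bigr).$$

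The remaining step is to optimize $T$. Balancing the two error terms means $T^{d-q/2}=x^{1/2}/\bigl((\log x)\log(Nx)\bigr)$, so we take
$$T=\Bigl(\frac{x^{1/2}}{(\log x)\log(Nx)}\Bigr)^{2/(2d-q)}.$$
This satisfies $T\ge1$ for $x$ large, which is where "sufficiently large $x$" enters. Then
$$\frac{x}{T^{d-q}\log x}=\frac{x}{\log x}\Bigl(\frac{(\log x)\log(Nx)}{x^{1/2}}\Bigr)^{\kappa}=\frac{x^{1-\kappa/2}(\log Nx)^{\kappa}}{(\log x)^{1-\kappa}},$$
with $\kappa=2(d-q)/(2d-q)$, and the other term is equal by construction. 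This gives the stated bound.

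I expect the only delicate point to be uniformity. First, the implied constant in \thref{Murtyestim} must not depend on $\lambda$ beyond the displayed $d_\lambda$ and $\log(w_\lambda)$ factors. Second, the weight growth $w_\lambda\ll T$ must be absorbed into $\log(Nx)$, which is exactly why \thref{wieghtbound} was recorded. Finally, the $O(1)$ discrepancy coming from $S_\A$ and from $c^\pm_{0,T}\ll1$ is negligible against the error term.
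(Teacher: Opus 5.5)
Your proposal is correct and follows essentially the same route as the paper's proof. In both, you sandwich $\one_\Omega$ between the $Q_T^\pm$ of \thref{BScor1} and apply \thref{Murtyestim} (with $K=F$, so ${\cal Q}_{\A,F}\asymp_F N$) character by character, using the weight bound $w_\lambda\le\varsigma T$ from \thref{wieghtbound} and \eqref{eqcas}; you then absorb $C_{\partial\Omega}^{1/2}\le C_{\partial\Omega}$ and balance with the same choice $T=\bigl(x^{1/2}/((\log x)\log(Nx))\bigr)^{2/(2d-q)}$. The only cosmetic difference is that you absorb $\pi_F(x)-\pi_F^{S_\A}(x)$ as $O_\A(1)$, whereas the paper writes it as $O_F(\log N)$.
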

\begin{proof}
 Write 
 \begin{equation}\label{eqsum}
  \#\{N\p\le x \; :\; \; \p\notin S_\A, \;\;\tilde{g}_\p\in \Omega\}=\sum_{N\p\le x}\one_\Omega(\tilde{g}_\p).   
 \end{equation}
 For $T\ge1$, \thref{BScor1} supplies real-valued class polynomials
$$
 Q_T^-\le\one_\Omega\le Q_T^+,\qquad Q_T^\pm(g)=a_{0,T}^\pm +\sum_{0<c(\lambda)< T^2} a_{\lambda,T}^\pm\chi_\lambda(g).
$$
For the upper bound, we have
\begin{eqnarray}
\sum_{N\p\le x}\one_\Omega(\tilde{g}_\p)&\le&
 \sum_{N\p\le x}Q_T^+(\tilde{g}_\p)\nonumber\\
& \le & a_{0,T}^+\, \pi_F^{S_\A}(x) +\sum_{0<c(\lambda)< T^2} a_{\lambda,T}^+ \sum_{N\p\le x}\chi_\lambda(\tilde{g}_\p)\nonumber\\
&\le & \mu_{\ST}(\Omega)\pi^{S_\A}(x) +\underbrace{(a_{0,T}^+-\mu_{\ST}(\Omega))\, \pi_F^{S_\A}(x)}_{E_1^+}+\underbrace{\sum_{0<c(\lambda)< T^2} a_{\lambda,T}^+\sum_{N\p\le x}\chi_\lambda(\tilde{g}_\p)}_{E_2^+}.\nonumber
\end{eqnarray}
For the lower bound, we have
\begin{eqnarray}
\sum_{N\p\le x}\one_\Omega(\tilde{g}_\p)&\ge&
 \sum_{N\p\le x}Q_T^-(\tilde{g}_\p)\nonumber\\
& \ge & a_{0,T}^-\, \pi_F^{S_\A}(x) +\sum_{0<c(\lambda)< T^2} a_{\lambda,T}^- \sum_{N\p\le x}\chi_\lambda(\tilde{g}_\p)\nonumber\\
&\ge & \mu_{\ST}(\Omega)\pi_F^{S_\A}(x) +\underbrace{(a_{0,T}^- -\mu_{\ST}(\Omega))\, \pi_F^{S_\A}(x)}_{E_1^-}+\underbrace{\sum_{0<c(\lambda)< T^2} a_{\lambda,T}^-\sum_{N\p\le x}\chi_\lambda(\tilde{g}_\p)}_{E_2^-}.\nonumber
\end{eqnarray}
It follows that
\begin{equation}\label{eq1}
\left|\dsum_{N\p\le x}\one_\Omega(\tilde{g}_\p)-\mu_{\ST}(\Omega)\pi_F^{S_\A}(x)\right|\le\max\{|E_1^\pm|+|E_2^\pm|\}.   
\end{equation}
\thref{BScor1}(ii) gives
$$|a_{0,T}^\pm-\mu_{\ST}(\Omega)|\ll_{\ST_F(\A)} \frac{C_{\partial \Omega}}{T^{d-q}}.$$
The Chebyshev upper bound gives $\pi^{S_\A}_F(x)\ll_F\frac{x}{\log x}$. Consequently,
\begin{equation}\label{eq2}
   |E_1^\pm|\ll_{\ST_F(\A),F} \frac{C_{\partial \Omega}}{T^{d-q}}\frac{x}{\log x}. 
\end{equation}
On the other hand, one has 
\begin{equation}\label{eq3}
    |E_2^\pm|\le \sum_{0<c(\lambda)< T^2} |a_{\lambda,T}^\pm|\left|\sum_{N\p\le x}\chi_\lambda(\tilde{g}_\p)\right|.
\end{equation}
\thref{Murtyestim} gives
$$
\sum_{N\p\le x}\;\chi_\lambda(\tilde{g}_{\p})=O_F(d_\lambda\;x^{1/2}\log(N(x+w_\lambda))).
$$
By \thref{BScor1}(iii), we have
\begin{equation}\label{eq4}
    \sum_{0<c(\lambda)< T^2} |a_{\lambda,T}^\pm|d_\lambda\ll_{\ST_F(\A)} C_{\partial \Omega}^{1/2}T^{q/2}.
    \end{equation}   
Let $\varsigma>0$ be the constant in \thref{wieghtbound}. Then by \eqref{eqcas} we have $w_\lambda\le \varsigma\, T$. Thus \eqref{eq3} becomes
\begin{equation}\label{eq5}
  |E_2^\pm|\ll_{\ST_F(\A)}  C_{\partial \Omega}^{1/2}T^{q/2} x^{1/2}\log(N(x+\varsigma T)).
\end{equation}
Since $C_{\partial \Omega}\ge 1$, we may replace its square root by $C_{\partial\Omega}$ in\eqref{eq5}. Consequently,
\begin{equation*}
\left|\sum_{N\p\le x}\one_\Omega(\tilde{g}_\p)-\mu_{\ST}(\Omega)\;\pi^{S_\A}_F(x) \right|\ll_{\ST_F(\A)} \frac{C_{\partial \Omega}}{T^{d-q}}\frac{x}{\log x}+C_{\partial \Omega}T^{q/2}x^{1/2}\log(N(x+\varsigma T)).
\end{equation*}
To balance the two  error terms, take
$$
 T=\left(\frac{x^{1/2}}{\log(Nx)\log x}\right)^{\frac{2}{2d-q}}.
$$
This is at least $1$ for sufficiently large $x$ and is bounded by a fixed power of $x$. In particular, we have $\log (N(x+\varsigma T))\asymp_{\ST_F(\A)}\log Nx$. Therefore, \eqref{eq1} yields 
$$
\#\{N\p\le x \; :\; \; \p\notin S_\A, \;\;\tilde{g}_\p\in \Omega\}=\mu_{\ST}(\Omega)\;\pi_F^{S_\A}(x)+O\left(C_{\partial \Omega}\dfrac{x^{1-\frac{\kappa}{2}}(\log Nx)^{\kappa}}{(\log x)^{1-\kappa}}\right).
$$
Finally 
$$|\pi_F(x)-\pi_F^{S_\A}(x)|\leq\#S_{\A}\ll_F\log N.$$
Replacing the main term therefore costs $O_F(\log N)$, which can be absorbed into the error term.
\end{proof}
\begin{rem}
\begin{enumerate}
\item In the case when $q=0$, one may similarly derive 
$$
\#\{N\p\le x \; :\; \; \p\notin S_\A, \;\;\tilde{g}_\p\in \Omega\}=\mu_{\ST}(\Omega)\;\pi_F(x)+O\left(C_{\partial \Omega}x^{\frac{1}{2}}(\log Nx)(\log x)\right).
$$
\item  Let $I\subset [-2g,2g]$ be an interval, where $g$ is the dimension of the abelian variety $\A$. By Hironaka’s uniformization theorem \cite{bm1988}[Theorem 0.1], the inverse image of $I$ under the trace map has finite upper $(d-1)$-dimensional Minkowski content. Therefore, the power of $x$ in
\thref{EffST} is $1-1/(d+1)$. If $r$ is the rank and
$s$ the number of positive roots, then $d=r+2s$.
The exponent in \cite[Theorem 3.8]{bucur2024} is
$1-1/(2(r+s))$. Thus, our exponent is smaller when $r>1$, and the exponents agree when $r=1$. 
\end{enumerate}
\end{rem}
\subsection{Effective Chebotarev-Sato-Tate distribution}
We retain the notation of $\S$4.1 and recall the character expansion of a conjugation-invariant subset of a finite group. Let $K/F$ be a finite Galois extension with Galois group $\G=\Gal(K/F)$, and let
$\cl\ne\varnothing$ be a conjugation-invariant subset of $\G$.
Write
$$
b_\xi(\cl)=\frac1{|\G|}\sum_{\gamma\in\cl}\overline{\xi(\gamma)},
 \qquad
 \mathscr A_\G(\cl)=\sum_{\xi\in\Irr(\G)}d_\xi|b_\xi(\cl)|.
$$
We have
$$
\sum_\xi|b_\xi(\cl)|^2=\frac{|\cl|}{|\G|},
\qquad
\sum_\xi d_\xi^2=|\G|.
$$
Furthermore, the Cauchy-Schwarz inequality yields
$$
\mathscr A_{\G}(\cl)
\le
\left(\sum_\xi d_\xi^2\right)^{1/2}
\left(\sum_\xi|b_\xi(\cl)|^2\right)^{1/2}
=|\cl|^{1/2}.
$$
We have the following effective form of \thref{conjCST}.
\begin{thm}\thlabel{EffCST}
Let $F$ be a number field, let $K/F$ be any finite Galois extension with Galois group $\G$, and let $\varnothing\ne\cl\subseteq\G$ be conjugation-invariant. Let $\A$ be an Abelian variety with a connected Sato–Tate group $\ST_F(\A)$. Let $\Omega\subset\ST_F(\A)$ be a measurable conjugation-invariant subset satisfying $\overline{\mathcal{M}}^q_g(\partial \Omega)<\infty$ where $0< q\le d-1$ with $d=\dim\ST_F(\A)$. Assume that \thref{conjMT} holds for $\A$ and \thref{conjGRH} holds for every character of $\CST_K(\A)$. If
$$
 \frac{|\cl|^{1/2}\,x^{1/2}}
      {|\G|(\log{\cal Q}_{\A,K}x)\log x}\geq1,
$$
then
$$
 \#\{N\mathfrak p\leq x\,:\,
     \widetilde g_{\mathfrak p}\in \Omega,\ \sigma_{\p}\in \cl\}\notag
 =\frac{|\cl|}{|\G|}\mu_{\ST}(\Omega)\pi_F(x)
 +O\left(C_{\partial \Omega}
   \frac{|\cl|^{1-\frac{\kappa}{2}}}{|\G|^{1-\kappa}}
   \frac{x^{1-\kappa/2}(\log {\cal Q}_{\A,K}x)^{\kappa}}{(\log x)^{1-\kappa}}
   \right)
$$
 where ${\cal Q}_{\A,K}$ is defined in \eqref{eqcond},  $C_{\partial \Omega}$ is defined in \eqref{eqboundary}, and $\kappa$ is defined in \eqref{eqexpoST}. The implied constant in the above error term depends only on the geometry of $\ST_F(\A)$, the field $F$, and the Abelian variety $\A$.
\end{thm}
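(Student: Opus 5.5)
We follow the proof of \thref{EffST}, replacing the indicator of $\Omega$ by the product indicator $\one_\Omega\otimes\one_\cl$ on $\CST_K(\A)=\ST_F(\A)\times\G$. Fix $T\ge1$ and take the class polynomials $Q_T^\pm=\sum_{c(\lambda)<T^2}a^\pm_{\lambda,T}\chi_\lambda$ from \thref{BScor1}, so that $Q_T^-\le\one_\Omega\le Q_T^+$. Expand the finite-group side exactly as $\one_\cl=\sum_{\xi\in\Irr(\G)}b_\xi(\cl)\xi$. Since $\one_\cl\ge0$, we get
\begin{equation*}
Q_T^-(\tilde g_\p)\one_\cl(\sigma_\p)\le \one_\Omega(\tilde g_\p)\one_\cl(\sigma_\p)\le Q_T^+(\tilde g_\p)\one_\cl(\sigma_\p).
\end{equation*}
The two outer functions are finite combinations $\sum_{\lambda,\xi}a^\pm_{\lambda,T}b_\xi(\cl)\,\chi_\lambda\otimes\xi$ of irreducible characters of $\CST_K(\A)$. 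Summing over $N\p\le x$, $\p\notin S$, reduces the count to sums of $\chi_\lambda\otimes\xi(x_\p)$.

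Next, split the sum. The pair $(\lambda,\xi)=(0,\mathbf 1)$ gives $a^\pm_{0,T}\frac{|\cl|}{|\G|}\pi_F^S(x)$. By \thref{BScor1}(ii) and Chebyshev, this equals the main term plus $O\bigl(C_{\partial\Omega}T^{-(d-q)}\frac{|\cl|}{|\G|}\frac{x}{\log x}\bigr)$. The pairs with $\lambda=0$ and $\xi\ne\mathbf 1$ use $|a^\pm_{0,T}|\ll1$, and \thref{Murtyestim} with $w_0=0$ bounds them by $\ll\mathscr A_\G(\cl)x^{1/2}\log({\cal Q}_{\A,K}x)$. All remaining pairs have $\lambda\ne0$. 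By \thref{Murtyestim}, \thref{wieghtbound} (which gives $w_\lambda\le\varsigma T$), \eqref{eqcas}, and \thref{BScor1}(iii), they contribute at most
\begin{equation*}
\sum_{0<c(\lambda)<T^2}|a^\pm_{\lambda,T}|d_\lambda\sum_\xi d_\xi|b_\xi(\cl)|\,x^{1/2}\log({\cal Q}_{\A,K}(x+\varsigma T))\ll C_{\partial\Omega}T^{q/2}|\cl|^{1/2}x^{1/2}\log({\cal Q}_{\A,K}x).
\end{equation*}
Here we use $\mathscr A_\G(\cl)\le|\cl|^{1/2}$ and $C^{1/2}_{\partial\Omega}\le C_{\partial\Omega}$. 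The $\lambda=0$, $\xi\ne\mathbf1$ term is dominated by the same bound since $T\ge1$.

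Now balance $C_{\partial\Omega}T^{-(d-q)}\frac{|\cl|}{|\G|}\frac{x}{\log x}$ against $C_{\partial\Omega}T^{q/2}|\cl|^{1/2}x^{1/2}\log({\cal Q}_{\A,K}x)$. Take
\begin{equation*}
T=\left(\frac{|\cl|^{1/2}x^{1/2}}{|\G|\log({\cal Q}_{\A,K}x)\log x}\right)^{\frac{2}{2d-q}}.
\end{equation*}
The hypothesis of the theorem is exactly $T\ge1$, and $T\le x$, so $\log({\cal Q}_{\A,K}(x+\varsigma T))\asymp\log({\cal Q}_{\A,K}x)$. Substituting, $T^{q/2}=Y^{q/(2d-q)}=Y^{1-\kappa}$ with $Y$ the bracket. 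The second error then becomes $C_{\partial\Omega}|\cl|^{1/2+(1-\kappa)/2}|\G|^{-(1-\kappa)}x^{1-\kappa/2}(\log{\cal Q}_{\A,K}x)^{\kappa}(\log x)^{-(1-\kappa)}$. This matches the claimed bound because $\frac12+\frac{1-\kappa}2=1-\frac\kappa2$.

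Finally, replace $\pi_F^S(x)$ by $\pi_F(x)$. The cost is $\#S\ll\log N+\log M(K/F)\ll\log{\cal Q}_{\A,K}$ times $|\cl|/|\G|\le1$, which is absorbed into the error. The main obstacle is the uniformity of \thref{Murtyestim} in $\xi$. Its implied constant must be independent of $K$, so the tensor conductor bound of \thref{lemconductor} is essential, and the factor $d_{\chi,\xi}=d_\lambda d_\xi$ must factor cleanly into $\mathscr A_\G(\cl)$. We also need $\delta_{\chi,\xi}=0$ for every nontrivial pair, which holds by irreducibility.
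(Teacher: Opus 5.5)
Your proposal is correct and follows essentially the same route as the paper. It uses the same sandwich $Q_T^-(\tilde g_\p)\one_\cl(\sigma_\p)\le\one_\Omega(\tilde g_\p)\one_\cl(\sigma_\p)\le Q_T^+(\tilde g_\p)\one_\cl(\sigma_\p)$, the same split into the trivial pair, the pairs with $\lambda=0,\ \xi\ne\mathbf 1$ (absorbed via $T\ge1$) and the pairs with $\lambda\ne0$, the same choice of $T$ (where $T\ge1$ is exactly the hypothesis), and the same replacement of $\pi_F^S(x)$ by $\pi_F(x)$, with your exponent bookkeeping $T^{q/2}=Y^{1-\kappa}$, $\tfrac12+\tfrac{1-\kappa}{2}=1-\tfrac{\kappa}{2}$ checking out.
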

\begin{proof}
Write 
$$
\#\{N\p\le x \; :\; \tilde{g}_\p\in \Omega,\quad \sigma_{\p}\in \cl\}= \sum_{N\p\le x}\one_\Omega(\tilde{g}_{\p})\mathbf{1}_\cl(\sigma_\p).$$
Let $T\ge 1$ and let $Q^\pm_T$  be the real-valued polynomials supplied by \thref{BScor1}. Since $\one_\cl\ge0$, we have
\begin{equation}\label{eq:1}
 Q_T^-(\tilde{g}_\p)\one_\cl(\sigma_\p)
 \le\one_\Omega(\tilde{g}_\p)\one_\cl(\sigma_\p)
 \le Q_T^+(\tilde{g}_\p)\one_\cl(\sigma_\p).
\end{equation}
 Since the indicator function $\mathbf{1}_\cl$ is constant on conjugacy classes of $\G$, it is a class function. Hence, 
$$
 \one_\cl=\displaystyle\sum_{\xi\in\Irr(\G)}b_{\xi}(\cl)\xi
 $$
 because irreducible characters of $\G$ form an orthonormal basis of the space of class functions.
Expanding the two real-valued bounding functions gives
\begin{eqnarray}\label{eq:2}
 \sum_{N\p\le x}
   Q_T^\pm(\widetilde g_\p)\one_\cl(\sigma_\p)&=&\sum_{\substack{c(\lambda)<T^2\\\xi\in\Irr(\G)}}
 c_{\lambda,T}^\pm \; b_{\xi}(\cl)
 \sum_{N\p\le x}
      \chi_\lambda(\widetilde g_\p)\xi(\sigma_\p)\nonumber\\
  &=&   \frac{|\cl|}{|\G|}\mu_{\ST}(\Omega)\pi_F^{S}(x)+\underbrace{\frac{|\cl|}{|\G|} (c_{0,T}^\pm-\mu_{\ST}(\Omega))\pi_F^S(x)}_{E_1^\pm}\nonumber\\
  &+&
\underbrace{c_{0,T}^{\pm}\sum_{1\neq\xi\in \Irr(\G)}b_{\xi}(\cl)\sum_{N\mathfrak{p}\le x}\xi(\sigma_\mathfrak{p})}_{E_2^\pm}\nonumber\\
&+&\underbrace{\sum_{0<c(\lambda)<T^2}
c_{\lambda,T}^{\pm}
\sum_{\xi\in\operatorname{Irr}(\G)}b_{\xi}(\cl)\sum_{N\p\le x}
      \chi_\lambda(\widetilde g_\p)\xi(\sigma_\p)}_{E^\pm_3}.
\end{eqnarray}
Summing \eqref{eq:1} over primes $\p$ outside $S$ with norm at most $x$ and invoking \eqref{eq:2} and the triangle inequality yields
\begin{equation}\label{eq:3}
\left|\sum_{N\p\le x}\one_\Omega(\tilde{g}_\p)\one_\cl(\sigma_\p)-\frac{|\cl|}{|\G|}\mu_{\ST}(\Omega)\pi_F^{S}(x)\right|\le \max\{|E_1^\pm|+|E^\pm_2|+|E_3^\pm|\}.
\end{equation}
By \thref{BScor1}(ii) and $\pi_F(x)\ll_F x/\log x$, one has
\begin{equation}\label{eq:4}
    |E_1^\pm|\ll_{\ST_F(\A)} \frac{|\cl|}{|\G|} \frac{x\, }{\log x}\frac{C_{\partial \Omega}}{T^{d-q}}.
\end{equation}
\thref{wieghtbound} together with \eqref{eqcas} imply that all the weights in the sums $E^\pm_2$ and $E^\pm_3$ satisfy $w_\lambda\le\varsigma T$. By \thref{BScor1}(ii), we have $|c_{0,T}^\pm|=O_{\ST_F(\A)}(1)$.  Therefore, it follows from \thref{Murtyestim}  that
\begin{eqnarray}\label{eq:5}
|E_2^\pm|&\ll_{\ST_F(\A)}& \left(\sum_{1\neq\xi\in\Irr(\G)}|b_{\xi}(\cl)|d_\xi\right)  x^{1/2}\log({\cal Q}_{\A,K}(x+\varsigma T))\nonumber\\
&\ll_{\ST_F(\A)}&\left(\mathscr{A}_\G(\cl)-\frac{|\cl|}{|\G|}\right) x^{1/2}\log({\cal Q}_{\A,K}(x+\varsigma T)),  
\end{eqnarray}
and
\begin{eqnarray}\label{eq:6}
 |E^\pm_3|&\ll_{\ST_F(\A)}&
 \left(\sum_{0<c(\lambda)<T^2}|c_{\lambda,T}^\pm|d_\lambda\right)
 \left(\sum_{\xi\in\Irr(\G)}|b_{\xi}(\cl)|d_\xi\right) x^{1/2}\,\log(\mathcal{Q}_{\A,K}(x+\varsigma T))\nonumber\\
 &\ll_{\ST_F(\A)} & C_{\partial \Omega}^{1/2}\mathscr A_\G(\cl)
       \,T^{q/2}\,x^{1/2}\,\log(\mathcal{Q}_{\A,K}(x+\varsigma T)).
\end{eqnarray}
Since $C_{\partial\Omega}\ge1$, $T^{q/2}\ge1$, and
$\mathscr{A}_\G(\cl)-\frac{|\cl|}{|\G|}\le\mathscr{A}_\G(\cl)$,
the upper bound in \eqref{eq:5} is absorbed into the bound in \eqref{eq:6}. Combining \eqref{eq:4} and \eqref{eq:6}, equation \eqref{eq:3} yields
\begin{eqnarray*}
\left|\sum_{N\p\le x}\one_\Omega(\tilde{g}_\p)\one_\cl(\sigma_\p)-\frac{|\cl|}{|\G|}\mu_{\ST}(\Omega)\pi_F^{S}(x)\right|&\ll&\frac{|\cl|}{|\G|} \frac{x}{\log x}\frac{C_{\partial \Omega}}{T^{d-q}}\nonumber\\
&+& C_{\partial \Omega}^{1/2}\mathscr{A}_\G(\cl)T^{q/2}x^{1/2}\,\log(\mathcal{Q}_{\A,K}(x+\varsigma T))
\end{eqnarray*}
To balance the error terms, we use $\mathscr{A}_\G(\cl)\le|\cl|^{1/2}$, $C_{\partial\Omega}^{1/2}\le C_{\partial\Omega}$, and choose
 $$T=\left(\frac{\sqrt{|\cl|}\sqrt x} {|\G|\,\log({\cal Q}_{\A,K}x)\log x}\right)^{2/(2d-q)}.$$
With this choice, we have $T\le  x$. Hence, 
$ \log\bigl(\mathcal Q_{\A,K}(x+\varsigma T)\bigr)\asymp_{\ST_F(\A)}\log({\cal Q}_{\A,K}x)$ uniformly in $K$.
Note that
$$
 |\pi_F(x)-\pi_F^S(x)|\le |S|
                    \ll_{F,\A}\log\mathcal Q_{\A,K}.
$$
Therefore, replacing $\pi_F^S(x)$ with $\pi_F(x)$ changes the main term by at most
$O_{F,\A}\left(\frac{|\cl|}{|\G|}\log\mathcal Q_{\A,K}\right)$.
This is absorbed into the balanced error since
$$
 \frac{|\cl|}{|\G|}\log\mathcal Q_{\A,K}
 \le C_{\partial \Omega}\sqrt{|\cl|}\sqrt x\,T^{q/2}\log({\cal Q}_{\A,K}x),
$$
for $x\ge3$, $T\ge1$,
$C_{\partial \Omega}\ge1$, $\frac{|\cl|}{|\G|}\le\sqrt{|\cl|}$, and
$\log({\cal Q}_{\A,K}x)\ge\log\mathcal Q_{\A,K}$.
\end{proof}
The following version allows the regions and extensions to vary.
\begin{thm}\thlabel{Unifeffec}
Let $F$ be a number field. Let $(K_j/F)_{j\in J}$ be a family of finite Galois extensions with Galois group $\G_j$, and let $\cl_j$ be a conjugation-invariant subset of $\G_j$. Let $\A/F$ be an Abelian variety with a connected Sato-Tate group $\ST_F(\A)$, and let $\boldsymbol\Omega:=(\Omega_i)_{i\in I}$ be a family of measurable conjugation-invariant subsets of $\ST_F(\A)$ such that $\overline{\mathcal{M}}^q_g(\partial \boldsymbol\Omega)<\infty$, where $0< q\le d-1$ with $d=\dim\ST_F(\A)$. Assume that  \thref{conjMT} holds for $\A$  and \thref{conjGRH} holds for all characters of $\CST_{K_j}(\A)$. Set
$$E_{i,j}(x):=\left|\sum_{N\p\le x}\one_{\Omega_i}(\tilde{g}_\p)\one_{\cl_j}(\sigma_{\p,j})-\frac{|\cl_j|}{|\G_j|}\mu_{\ST}(\Omega_i)\pi_F(x)\right|.$$
Then for every $(i,j)\in I\times J$, $T\ge 1$, and $x\ge 3$, we have
\begin{equation*}
E_{i,j}(x)=O\left(\frac{|\cl_j|}{|\G_j|} \frac{x}{\log x}\frac{C_{\partial \boldsymbol\Omega}}{T^{d-q}}+ C_{\partial \boldsymbol\Omega}^{1/2}\mathscr{A}_{\G_j}(\cl_j)T^{q/2}x^{1/2}\log(\mathcal{Q}_{\A,K_j}(x+\varsigma T))\right)
\end{equation*}
where $C_{\partial\boldsymbol\Omega}$ is defined in \eqref{eqboundaryfamily} and $\varsigma$ is the constant from \thref{wieghtbound}. The implied constant depends only on the geometry of the Sato-Tate group, the field $F$, and the abelian variety $\A$.
\end{thm}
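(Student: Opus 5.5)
The plan is to rerun the proof of \thref{EffCST} for a fixed pair $(i,j)$, but with \thref{BSuniform} in place of \thref{BSpoly}, and to stop before choosing $T$. Since $T$ stays free, the task is only to check that every implied constant is independent of $(i,j)$.

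First, fix $(i,j)$ and $T\ge1$. Apply \thref{BSuniform} to the family $\boldsymbol\Omega$ on $M=\ST_F(\A)$, and then apply the projection $\mathscr P$ exactly as in the proof of \thref{BScor1}. This gives class polynomials
\[
Q^\pm_{T,\Omega_i}=\sum_{c(\lambda)<T^2}c^{\pm,i}_{\lambda,T}\chi_\lambda
\]
with the following properties, holding with constants depending only on $\ST_F(\A)$:
\begin{itemize}
\item[(a)] $Q^-_{T,\Omega_i}\le\one_{\Omega_i}\le Q^+_{T,\Omega_i}$;
\item[(b)] $|c^{\pm,i}_{0,T}-\mu_{\ST}(\Omega_i)|\ll C_{\partial\boldsymbol\Omega}T^{-(d-q)}$ and $|c^{\pm,i}_{0,T}|\ll1$;
\item[(c)] $\sum_{0<c(\lambda)<T^2}|c^{\pm,i}_{\lambda,T}|d_\lambda\ll C_{\partial\boldsymbol\Omega}^{1/2}T^{q/2}$.
\end{itemize}
These follow because the proof of \thref{BScor1} uses \thref{BSpoly} only through (i)--(iii), and \thref{BSuniform} supplies those with the uniform constant $C_{\partial\boldsymbol\Omega}$.

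Second, multiply (a) by $\one_{\cl_j}(\sigma_{\p,j})\ge0$ and expand $\one_{\cl_j}=\sum_\xi b_\xi(\cl_j)\xi$. Decompose the sum over primes into the main term and the three error terms $E_1^\pm,E_2^\pm,E_3^\pm$, exactly as in \eqref{eq:2}.
\begin{itemize}
\item $E_1^\pm$ is bounded by (b) and Chebyshev's bound $\pi_F(x)\ll_F x/\log x$.
\item For $E_2^\pm$ and $E_3^\pm$, apply \thref{Murtyestim} to each pair $\chi_\lambda\otimes\xi$ of $\CST_{K_j}(\A)$. Its implied constant depends only on $F$ and $g$; the dependence on $K_j$ and on $\lambda$ enters only through $d_{\chi,\xi}$ and $\log(\mathcal Q_{\A,K_j}(x+w_\lambda))$.
\item \thref{wieghtbound} together with \eqref{eqcas} gives $w_\lambda\le\varsigma T$.
\end{itemize}
Summing over $\xi$ produces the factor $\sum_\xi d_\xi|b_\xi(\cl_j)|=\mathscr A_{\G_j}(\cl_j)$. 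Combining this with (c) bounds $|E_3^\pm|$ by the second term of the claimed estimate. The bound for $E_2^\pm$ is absorbed into it, since $C_{\partial\boldsymbol\Omega}\ge1$ and $T^{q/2}\ge1$.

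Finally, replace $\pi_F^{S_j}(x)$ by $\pi_F(x)$. This costs at most $\frac{|\cl_j|}{|\G_j|}\,|S_j|\ll_{F,\A}\frac{|\cl_j|}{|\G_j|}\log\mathcal Q_{\A,K_j}$. This is dominated by the second term: the $b_{\mathbf 1}$-summand of $\mathscr A_{\G_j}(\cl_j)$ gives $\mathscr A_{\G_j}(\cl_j)\ge|\cl_j|/|\G_j|$, and also $x^{1/2}\ge1$. Note that we must not appeal to any balancing of $T$ here, since $T$ is free.

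The main obstacle is the uniformity bookkeeping, and the delicate point is \thref{Murtyestim}. One must check that its $O_{F,g}$ constant is genuinely independent of $K_j$. This rests on \thref{lemconductor}, whose constant is absolute or $O_{F,g}$, and on the fact that the number of archimedean Gamma factors, $n_Fd_{\chi,\xi}$, is accounted for by the factor $d_{\chi,\xi}$. I would also confirm that \thref{BSuniform}(iii) survives the projection $\mathscr P$ uniformly; this holds because the argument in \thref{BScor1}(iii) is just evaluation at the identity $e$.
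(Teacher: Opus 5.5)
Your proposal is correct and follows the paper's proof. The paper also combines \thref{BSuniform} with the projection argument of \thref{BScor1}, expands $\one_{\cl_j}$ in characters, and reruns the proof of \thref{EffCST} mutatis mutandis with $T$ left free. Your treatment of the passage from $\pi_F^{S_j}(x)$ to $\pi_F(x)$, via the trivial-character summand giving $\mathscr A_{\G_j}(\cl_j)\ge|\cl_j|/|\G_j|$, is the right way to fill in that step: the corresponding step in \thref{EffCST} compares against $\sqrt{|\cl|}$, whereas here the error term carries $\mathscr A_{\G_j}(\cl_j)$.
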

\begin{proof}
Fix $T\ge 1$, \thref{BSuniform} together with the averaging argument of \thref{BScor1} provide a family of real-valued  combinations of irreducible characters
$$Q_{T,\Omega_{i}}^\pm=\sum_{c(\lambda)< T^2}a_{\lambda,T}^{\pm,i}\;\chi_\lambda,$$
such that
$$\quad Q_{T,\Omega_{i}}^-\le \one_{\Omega_i}\le Q_{T,\Omega_{i}}^+\quad \text{for every}\; i\in I.$$
For each $j\in J$, write $\one_{\cl_j}=\dsum_{\xi\in\Irr(\G_j)}b_{\xi}(\cl_j)\xi$. Following the proof of \thref{EffCST}, {\it mutatis mutandis}, one obtains the desired result. 
\end{proof}
Next, we establish a uniform effective Chebotarev–Sato–Tate estimate for prime-dependent
moving regions.
\begin{thm}\thlabel{MovingCST}
Let $F$ be a number field. Let $(K_j/F)_{j\in J}$ be a family of finite Galois extensions with Galois group $\G_j$, and let $\cl_j\ne\varnothing$ be a conjugation-invariant subset of $\G_j$. Let $\A/F$ be an abelian variety with a connected Sato–Tate group $\ST_F(\A)$, and let $\boldsymbol\Omega=(\Omega_u)_{u\in I}$ be an increasing family of measurable conjugation-invariant subsets of $\ST_F(\A)$, indexed by an interval $I\subseteq\R$, and suppose that the family of boundaries satisfies $\overline{\mathcal{M}}^q_g(\partial\boldsymbol \Omega)<\infty$, where $0< q\le d-1$ and $d=\dim\ST_F(\A)$. Assume that \thref{conjMT} holds for $\A$ and \thref{conjGRH} holds for every character of $\CST_{K_j}(\A)$. Suppose that
$$
 \mu_{\ST}(\Omega_v\setminus\Omega_u)\le L_\Omega(v-u)
                  \qquad \text{for every }u\le v,\,u,v\in I.
$$
Let $\varepsilon:=\varepsilon_X:[X,2X]\to I$ be Lipschitz with constant at most $V_X/X$. 
Set 
$$D_j(X):=\left|\sum_{X<N\p\le2X}\one_{\Omega_{\varepsilon(N\p)}}(\tilde{g}_\p)\one_{\cl_j} (\sigma_{\p,j})- \frac{|\cl_j|}{|\G_j|}\sum_{\substack{X<N\p\le2 X \\ \p\notin S_j}} \mu_{\ST}(\Omega_{\varepsilon(N\p)})\right|. $$
For every integer $R\ge1$ and every $T\ge1$, $X\ge3$, we have
\begin{eqnarray*}
D_j(X)=O\left( \frac{|\cl_j|}{|\G_j|}\frac{X}{\log X}\left(\frac{C_{\partial\boldsymbol\Omega}}{T^{d-q}}
+\frac{L_\Omega V_X}{R}\right)+ R\,\mathscr A_{\G_j}(\cl_j)C_{\partial\boldsymbol\Omega}^{1/2}\sqrt X\,T^{q/2} \mathcal \log({\cal Q}_{\A,K_j}(X+\varsigma T))\right),
\end{eqnarray*}
where $C_{\partial\boldsymbol\Omega}$ is defined in \eqref{eqboundaryfamily} and $\varsigma$ is the constant from \thref{wieghtbound}. The implied constant depends only on the geometry of the Sato-Tate group, the field $F$ and the Abelian variety $\A$.
\end{thm}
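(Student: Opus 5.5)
The proof reduces to \thref{Unifeffec} by freezing the region on short subintervals of the dyadic range and using the monotonicity of the family. Split $(X,2X]$ into $R$ consecutive intervals $J_k=(X_k,X_{k+1}]$ of length $X/R$, with $X_k=X+kX/R$, $0\le k\le R-1$. On $J_k$ the Lipschitz hypothesis gives
\[
|\varepsilon(t)-\varepsilon(X_k)|\le V_X/R .
\]
Set $u_k^-:=\min_{t\in \overline{J_k}}\varepsilon(t)$ and $u_k^+:=\max_{t\in\overline{J_k}}\varepsilon(t)$. Both lie in $I$ because $I$ is an interval, and $u_k^+-u_k^-\le V_X/R$. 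Since $\boldsymbol\Omega$ is increasing, every prime with $N\p\in J_k$ satisfies
\[
\one_{\Omega_{u_k^-}}\le\one_{\Omega_{\varepsilon(N\p)}}\le\one_{\Omega_{u_k^+}} .
\]
Multiplying by $\one_{\cl_j}(\sigma_{\p,j})\ge0$ therefore sandwiches the moving count on $J_k$ between two counts with fixed regions $\Omega_{u_k^\pm}$. These regions belong to the family $\boldsymbol\Omega$, so the uniform constant $C_{\partial\boldsymbol\Omega}$ applies to them.

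For each fixed region I apply \thref{Unifeffec} (or rather its proof) at the two endpoints $X_{k+1}$ and $X_k$ and subtract, which gives the count over $J_k$. The main term is then
\[
\frac{|\cl_j|}{|\G_j|}\,\mu_{\ST}(\Omega_{u_k^\pm})\bigl(\pi^{S_j}_F(X_{k+1})-\pi_F^{S_j}(X_k)\bigr).
\]
It is better to keep $\pi_F^{S_j}$ here rather than passing to $\pi_F$, so that the main term matches the one in $D_j(X)$; this is easy, because the proof of \thref{EffCST} works with $\pi^S_F$ before its final replacement step. The error per block is bounded by two terms.

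\begin{itemize}
\item The constant-term error $\frac{|\cl_j|}{|\G_j|}\frac{C_{\partial\boldsymbol\Omega}}{T^{d-q}}\,\#\{\p:N\p\in J_k\}$. Summed over $k$, this gives $\frac{|\cl_j|}{|\G_j|}\frac{X}{\log X}\frac{C_{\partial\boldsymbol\Omega}}{T^{d-q}}$ by Chebyshev, with no factor $R$. It is cleaner to run the spectral argument directly on the block $J_k$, so that this term is multiplied by the block count rather than by $\pi_F(X_{k+1})$.
\item The oscillatory term $C_{\partial\boldsymbol\Omega}^{1/2}\mathscr A_{\G_j}(\cl_j)T^{q/2}\sqrt{X}\log(\mathcal Q_{\A,K_j}(X+\varsigma T))$. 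It is applied with $x\le 2X$, so $\sqrt{2X}\ll\sqrt X$, and it occurs twice per block. Summed over the $R$ blocks it gives the factor $R$ in the statement.
\end{itemize}

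It remains to compare the frozen main term with the true one. For $N\p\in J_k$,
\[
0\le \mu_{\ST}(\Omega_{u_k^+})-\mu_{\ST}(\Omega_{\varepsilon(N\p)})\le L_\Omega\,(u_k^+-\varepsilon(N\p))\le L_\Omega V_X/R ,
\]
and symmetrically for $u_k^-$. Hence
\[
\frac{|\cl_j|}{|\G_j|}\sum_{\substack{N\p\in J_k\\ \p\notin S_j}}\mu_{\ST}(\Omega_{u_k^\pm})
=\frac{|\cl_j|}{|\G_j|}\sum_{\substack{N\p\in J_k\\ \p\notin S_j}}\mu_{\ST}(\Omega_{\varepsilon(N\p)})
+O\Bigl(\frac{|\cl_j|}{|\G_j|}\frac{L_\Omega V_X}{R}\#\{N\p\in J_k\}\Bigr).
\]
Summing over $k$ and using $\pi_F(2X)-\pi_F(X)\ll X/\log X$ gives the term $\frac{|\cl_j|}{|\G_j|}\frac{X}{\log X}\frac{L_\Omega V_X}{R}$. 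Taking the upper sandwich for one inequality and the lower sandwich for the other, and collecting the three contributions, yields the stated bound for $D_j(X)$.

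The main obstacle is bookkeeping. Applying \thref{Unifeffec} naively at both endpoints would produce a constant-term error of size $\frac{|\cl_j|}{|\G_j|}\frac{C_{\partial\boldsymbol\Omega}}{T^{d-q}}\frac{X}{\log X}$ per block, and hence an extra factor $R$ after summing. To avoid this, I would redo the spectral sandwich of \thref{EffCST} on the short range $J_k$ directly: the term $(c^{\pm}_{0,T}-\mu_{\ST})$ is multiplied by the block prime count, while the character sums over $J_k$ are differences of the sums in \thref{Murtyestim}, each $O(d_{\chi,\xi}\sqrt X\log(\mathcal Q(X+\varsigma T)))$. Here \thref{wieghtbound} and \eqref{eqcas} control $w_\lambda\le\varsigma T$, and \thref{BSuniform}(iii) bounds the spectral coefficient sum uniformly in $u$.
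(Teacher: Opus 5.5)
Your proposal is correct and follows essentially the same route as the paper. Both arguments partition $(X,2X]$ into $R$ equal blocks and freeze the region at $u_k^-=\min\varepsilon$, $u_k^+=\max\varepsilon$ on each closed block using monotonicity. Both then run the spectral sandwich of \thref{Unifeffec} directly on each block, so that the constant-term error is weighted by the block prime count $\pi_k^{S_j}$ and each block contributes a single oscillatory term. Finally, both compare the main terms via $\mu_{\ST}(\Omega_{u_k^+}\setminus\Omega_{u_k^-})\le L_\Omega V_X/R$.

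One minor correction: $u_k^\pm\in I$ because they are attained values of the continuous map $\varepsilon$ on the closed block, not because $I$ is an interval.
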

\begin{proof}
Let $j\in J$. Partition  $(X,2X]$ into $R$ equal half-open intervals
$$
 t_k=X+\frac{kX}{R},\qquad
 I_k=(t_{k-1},t_k]
$$
with $1\le k\le R$.  Then, one may write
$$\#\{X<N\p\le 2X\; :\; \p\notin S_j,\, \tilde{g}_\p\in\Omega_{\varepsilon(N\p)},\;
   \sigma_{\p,j}\in\cl_j \}=\sum_{k=1}^R\mathcal N_k$$
where  $\mathcal N_k:=\#\{N\p\in I_k\, :\, \p\notin S_j,\;
   \widetilde g_\p\in\Omega_{\varepsilon(N\p)},\;\; \sigma_{\p,j}\in\cl_j\}$, 
and $$\dsum_{\substack{X<N\p\le 2X\\ \p\notin S_j}}\mu_{\ST}(\Omega_{\varepsilon(N\p)})=\dsum_{k=1}^{R}\mathcal V_k\quad\text{where}\quad V_k=\sum_{\substack{N\p\in I_k\\ \p\notin S_j}}\mu_{\ST}(\Omega_{\varepsilon(N\p)}).$$
For $1\le k\le R$, by continuity of $\varepsilon$ one may define $u_k^-:=\displaystyle\min_{t\in [t_{k-1},t_k]}\varepsilon(t)$ and $u_k^+:=\displaystyle\max_{t\in [t_{k-1},t_k]}\varepsilon(t)$.
For $t\in I_k$, the nesting assumption gives $\Omega_{u_k^-}\subseteq\Omega_{\varepsilon(t)} \subseteq\Omega_{u_k^+}$ hence, $\one_{\Omega_{u^-}}\le \one_{\Omega_{\varepsilon(t)}}\le \one_{\Omega_{u^+}}$. Therefore,
\begin{equation}\label{eqcountbracket1}
\underbrace{\sum_{N\p\in I_k}\one_{\Omega_{u_k^-}}(\tilde{g}_\p)\one_{\cl_j}(\sigma_{\p,j})}_{E^-_k}\le \mathcal{N}_k\le\underbrace{\sum_{N\p\in I_k}\one_{\Omega_{u_k^+}}(\tilde{g}_\p)\one_{\cl_j}(\sigma_{\p,j})}_{E^+_k}
\end{equation}
and 
\begin{equation}\label{eqcountbracket2}
    \pi_k^{S_j}\; \mu_{\ST}(\Omega_{u_k^-})\le \mathcal{V}_k\le  \pi_k^{S_j}\;\mu_{\ST}(\Omega_{u_k^+})
\end{equation}
where $\pi_k^{S_j}=\#\{N\p\in I_k : \p\notin S_j\}$. On the other hand,  as $\varepsilon$ is Lipschitz, we have $u_k^+-u_k^-\le \frac{V_X}{X}(t_k-t_{k-1})=\frac{V_X}{R}$. Hence
$$
 0\le\mu_{\ST}(\Omega_{u_k^+})-\mu_{\ST}(\Omega_{u_k^-})
 =\mu_{\ST}(\Omega_{u_k^+}\setminus\Omega_{u_k^-})
 \le\frac{L_\Omega V_X}{R}.
$$
Consequently, by \eqref{eqcountbracket2}, one gets
\begin{equation}\label{eqvolumebracket}
 0\le\mu_{\ST}(\Omega_{u_k^+})\pi_k^{S_j}-\mathcal V_k
       \le\frac{L_\Omega V_X}{R}\pi_k^{S_j},\qquad
 0\le\mathcal V_k-\mu_{\ST}(\Omega_{u_k^-})\pi_k^{S_j}
       \le\frac{L_\Omega V_X}{R}\pi_k^{S_j}.
\end{equation}
By a similar argument to the one in the proof of \thref{Unifeffec}, we have 
$$E^\pm_k-\frac{|\cl_j|}{|\G_j|}\mu_{\ST}(\Omega_{u_k^\pm})\pi_k^{S_j}=O\left(\frac{|\cl_j|}{|\G_j|} \frac{C_{\partial\boldsymbol{\Omega}}}{T^{d-q}}\pi_k^{S_j}+C_{\partial\boldsymbol{\Omega}}^{1/2}\mathscr{A}_{\G_j}(\cl_j)T^{q/2}X^{1/2}\log(\mathcal{Q}_{\A,K_j}(X+\varsigma T))\right).$$
Therefore, \eqref{eqcountbracket2} together with \eqref{eqvolumebracket} yield
\begin{equation*}
 \begin{split}
 \mathcal{N}_k-\frac{|\cl_j|}{|\G_j|}\mathcal V_k
 &\le E^+_k-\frac{|\cl_j|}{|\G_j|}\mu_{\ST}(\Omega_{u_k^+})\pi_k^{S_j}
             +\frac{|\cl_j|}{|\G_j|}(\mu_{\ST}(\Omega_{u_k^+})\pi^{S_j}_k-\mathcal V_k)\\
 &\ll \frac{|\cl_j|}{|\G_j|} \frac{C_{\partial\boldsymbol{\Omega}}}{T^{d-q}}\pi_k^{S_j}+C_{\partial\boldsymbol{\Omega}}^{1/2}\mathscr{A}_{\G_j}(\cl_j)T^{q/2}X^{1/2}\log(\mathcal{Q}_{\A,K_j}(X+\varsigma T))
             +\frac{|\cl_j|}{|\G_j|}\frac{ L_\Omega V_X}{R}\pi_k^{S_j}.
 \end{split}
\end{equation*}
Similarly, for the lower discrepancy, we have
\begin{equation*}
\frac{|\cl_j|}{|\G_j|}\mathcal{V}_k-\mathcal N_k
 \ll \frac{|\cl_j|}{|\G_j|} \frac{C_{\partial\boldsymbol{\Omega}}}{T^{d-q}}\pi_k^{S_j}+C_{\partial\boldsymbol{\Omega}}^{1/2}\mathscr{A}_{\G_j}(\cl_j)T^{q/2}X^{1/2}\log(\mathcal{Q}_{\A,K_j}(X+\varsigma T))+\frac{|\cl_j|}{|\G_j|}\frac{ L_\Omega V_X}{R}\pi_k^{S_j}.
\end{equation*}
Therefore
\begin{equation}\label{eqblockerror}
 | \mathcal{N}_k-\frac{|\cl_j|}{|\G_j|}\mathcal V_k|
 \ll \frac{|\cl_j|}{|\G_j|} \frac{C_{\partial\boldsymbol{\Omega}}}{T^{d-q}}\pi_k^{S_j}+C_{\partial\boldsymbol{\Omega}}^{1/2}\mathscr{A}_{\G_j}(\cl_j)T^{q/2}X^{1/2}\log(\mathcal{Q}_{\A,K_j}(X+\varsigma T))+\frac{|\cl_j|}{|\G_j|}\frac{ L_\Omega V_X}{R}\pi_k^{S_j}.
\end{equation}
Finally, note that
$$
 \sum_{k=1}^R\pi_k^{S_j}
 =\pi_F^{S_j}(2X)-\pi_F^{S_j}(X)
 \le\pi_F(2X)\ll_F\frac{X}{\log X}.
$$
Summing \eqref{eqblockerror} over $k$, yields
\begin{equation*}
 \begin{split}
 D_j(X)&\le\sum_{k=1}^R\left|\mathcal N_k-\frac{|\cl_j|}{|\G_j|}\mathcal V_k\right|\\
 &\ll\frac{|\cl_j|}{|\G_j|}\left(\frac{C_{\partial\boldsymbol{\Omega} }}{T^{d-q}}+\frac{L_\Omega V_X}{R}\right)
\frac{X}{\log X}+RC_{\partial\boldsymbol{\Omega}}^{1/2}\mathscr{A}_{\G_j}(\cl_j)T^{q/2}X^{1/2}\log(\mathcal{Q}_{\A,K_j}(X+\varsigma T)),
 \end{split}
\end{equation*}
which completes the proof.
\end{proof}
\subsection{Murty's conjecture}
 Let $f$ be a normalized Hecke cuspidal eigenform of weight $2$ and level $N$, and write $a(p)$ for its $p$-th coefficient. Set
 $$\Pi(x)=\#\{p\le x:p\nmid N,\ a(p)\in\Q\}.$$
 Building on earlier work of Lang and Trotter \cite{lang2006}, Murty \cite{murty1999} proposed the following conjecture:
\begin{conjecture}
Let $f$ be a normalized weight-two newform of level $N$ with quadratic coefficient field and without inner twists. Then 
$$
\Pi(x)\sim c_f\frac{x^{1/2}}{\log x},$$
with $c_f$ a constant that depends on the eigenform.
\end{conjecture}
In \cite{van2017}, Van Hirtum presents a heuristic model supporting the above conjecture and making $c_f$ explicit. His model relies on assumptions combining a factorization of
finite place and archimedean conditions with an approximation for shrinking regions. The quantitative moving-region estimate in \thref{MovingCST} addresses this uniformity problem by giving an explicit bound for the discrepancy.
 To elaborate, since $f$ has no inner twists, it has a trivial nebentypus. Its coefficient field $K_f$ is therefore totally real. Let $D$ be the positive squarefree integer such that $K_f:=\Q(\sqrt{D})$. Let us denote by $\bar{\cdot}$
the unique non-trivial element of $\Gal(K_f/\Q)$. Define
$$Z_p:=a(p)-\overline{a(p)}.$$
Let $\A_f$ be the abelian surface defined over $\Q$ attached to $f$ by the Shimura construction \cite{diamond2005}[Section 6.6] with absolute conductor $N^2$. For a prime $\ell$, let
$$\rho_{\A_f,\ell}:G_{\Q}\longrightarrow \GSp_4(\overline{\Q}_\ell)$$
be the $\ell$-adic representation attached to $\A_f$.
Since $f$ has no inner twists ${\rm End}^0_{\overline{\Q}}(\A_f)={\rm End}^0_{\overline{\Q}}(\A_f)=K_f$ (cf, \cite{ribet1979}[Theorem 5]). In the terminology of \cite{fite2012}, $\A_f$ is of Galois type $[1,\R\times\R]$ . Therefore, its Sato-Tate group is $\ST_{\Q}(\A_f)=\SU(2)\times\SU(2)$. Furthermore, \thref{conjMT} is known to hold for Abelian varieties of dimension less than 3. For a prime $p\nmid N$, we write $\tilde{g}_p$ for the normalized image of the arithmetic Frobenius ${\rm Frob}_p$ of the absolute Galois group $G_\Q$ under the $\ell$-adic representation attached to $\A_f$, as defined in Subsection \ref{Sec 3.1}. The normalized $p$-Weil polynomial $L_{\A_f,p}$ of $\A_f$ factors as
$$
L_{\A_f,p}(T):=\det(1-\tilde{g}_pT)=\left(T^2-\frac{a(p)}{\sqrt{p}}T+1\right)\left(T^2-\frac{\overline{a(p)}}{\sqrt{p}}T+1\right).
$$

Let $O_f$ denote the ring of integers of $K_f$. For every prime $\ell$, fix an integral realization
$$
\rho_{f,\ell}:G_{\mathbb Q}\longrightarrow
\mathrm{GL}_2(\mathcal O_f\otimes_{\mathbb Z}\mathbb Z_\ell)
$$
of the $\ell$-adic Galois representation attached to \(f\). For each positive integer $m$, let
$$
\rho_m:G_{\mathbb Q}\longrightarrow
\mathrm{GL}_2(\mathcal O_f/m\mathcal O_f)
$$
be the representation obtained, via the Chinese remainder theorem, from the reductions of $\rho_{f,\ell}$ modulo $\ell^{v_\ell(m)}$ for $\ell\mid m$. Then, we have 
$$
\tr\rho_m({\rm Frob}_p)\equiv a(p)\pmod{m{\cal O}_f}\qquad(p\nmid mN).
$$
 Let $\mathscr{G}_m:=\mathrm{Im}\;\rho_m$, and define the conjugation-invariant subset
$$\cl_m:=\{\gamma\in\G_m\;:\;\tr(\gamma)-\overline{\tr(\gamma)}=0\pmod{m O_f}\}.$$
Let $K_m:=\overline{\mathbb{Q}}^{\mathrm{Ker}\;\rho_m}$ be the field cut out by $\rho_m$. Then $K_m/\Q$ is a finite Galois extension with Galois group $\G_m$. Moreover, $K_m$ is unramified outside the primes dividing $mN$. Since the ring $\mathcal O_f/m\mathcal O_f$ has $m^2$ elements, we have $|\G_m|\le m^8$. Hence,
\begin{equation}\label{eqconductorbis}
 M(K_m/\mathbb Q)
   =2|\G_m|\prod_{p\in P(K_m/\mathbb Q)}p
   \le 2m^8\operatorname{rad}(mN)
   \le 2m^9\operatorname{rad}(N).
\end{equation}
For $\varepsilon>0$, set
$$
\Omega_\varepsilon=\{(\gamma_1,\gamma_2)\in \SU(2)\times\SU(2):|\Phi(\gamma_1,\gamma_2)|<\varepsilon\},\quad\text{with}\quad\Phi(\gamma_1,\gamma_2)=\frac{1}2(\tr(\gamma_1)-\tr(\gamma_2)).
$$
For a prime $p\nmid mN$, we write $\sigma_{p,m}=\left(\dfrac{K_m/\Q}{p}\right)$ for the Artin symbol relative to $K_m/\Q$ at $p$.
For each fixed $m\ge 1$ and every prime $p\nmid mN$, we have
\begin{eqnarray*}
a(p)\in\Q &\Leftrightarrow& Z_p=0\\&\Leftrightarrow& \tilde{g}_p\in \Omega_{\frac{m\sqrt{D}}{4\sqrt{p}}}\quad\text{and}\quad \sigma_{p,m}\in \cl_m.
\end{eqnarray*}
Indeed $Z_p\in\sqrt D\Z$. Moreover, $\mathcal O_f\cap\sqrt D\Q=\sqrt D\Z$, which follows from the integral basis $1,\sqrt D$ if $D\equiv2,3\pmod4$; if $D\equiv1\pmod4$, write an integer as $a+b(1+\sqrt D)/2$ and impose trace zero. Consequently, $Z_p\in m\mathcal O_f$ forces $Z_p\in m\sqrt D\Z$, including at $2$. On the other hand
$$
 |\Phi(\widetilde g_p)|=\frac{|Z_p|}{2\sqrt p}
 <\frac{m\sqrt D}{4\sqrt p}
 \quad\Longleftrightarrow\quad |Z_p|<\frac{m\sqrt D}{2}.
$$
Zero is the only element of $m\sqrt D\Z$ in this interval.
Thus, Murty’s conjecture reduces to a quantitative problem of Chebotarev-Sato-Tate type involving varying Galois extensions and prime-dependent moving regions. We now examine what follows from the standard analytic assumptions of \thref{conjGRH} on the $L$-functions attached to irreducible characters of the associated Chebotarev–
Sato–Tate group $\CST_{K_m}(\A_f):=\ST_\Q(\A_f)\times\G_m$.  

The family $\boldsymbol{\Omega}:=(\Omega_\varepsilon)_{\varepsilon\in (0,\infty)}$ consists of measurable conjugation-invariant subsets of $\ST_\Q(\A_f)$. The following lemma gives a uniform bound for the upper 5-dimensional Minkowski content of their boundaries, allowing us to apply \thref{MovingCST}.
\begin{lemma}\thlabel{MurtyTubes}
Fix a bi-invariant Riemannian metric on $\ST_\Q(\A_f)$, and let
$L_\Phi=\displaystyle\sup_{\ST_\Q(\A_f)}\|\nabla \Phi\|$. For all $\varepsilon,\varepsilon',\Delta>0$, we have
\begin{enumerate}[label=(\roman*)]
    \item $\mu_{\ST}((\partial\Omega_\varepsilon)^{(\Delta)})\le\frac{64L_\Phi}{3\pi^2}\Delta.$

\item $|\mu_{\ST}(\Omega_\varepsilon)-\mu_{\ST}(\Omega_{\varepsilon'})|\le\frac{32}{3\pi^2}|\varepsilon-\varepsilon'|,$ 
\item $ \mu_{\ST}(\Omega_\varepsilon)=\frac{32}{3\pi^2}\varepsilon+O(\varepsilon^2) \qquad(\varepsilon\to0^+).$
\end{enumerate}
In particular, $C_{\partial\boldsymbol\Omega}\le\max\{1,64L_\Phi/(3\pi^2)\}$.
\end{lemma}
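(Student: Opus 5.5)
The plan is to reduce all three statements to a one-variable computation for the push-forward of Haar measure under $\Phi$. For $\gamma\in\SU(2)$ with eigenvalues $e^{\pm i\theta}$, the Weyl integration formula gives that $\theta\in[0,\pi]$ has density $\frac{2}{\pi}\sin^2\theta$. Hence $u=\frac12\tr(\gamma)=\cos\theta$ has density
$$w(u)=\frac{2}{\pi}\sqrt{1-u^2}\quad\text{on }[-1,1].$$
Since $\mu_{\ST}$ is the product of the two Haar measures, $\Phi=u_1-u_2$ is a difference of two independent variables with density $w$. Its density is therefore the convolution
$$h(t)=\int_{\R} w(u)\,w(u-t)\,\d u ,$$
where $w$ is extended by zero outside $[-1,1]$. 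Every quantity in the lemma is then expressed through $\mu_{\ST}(\{a\le|\Phi|\le b\})=\int_{a\le|t|\le b}h(t)\,\d t$.

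The key step is the bound $\sup_t h(t)=h(0)=\frac{16}{3\pi^2}$. For the value at $0$, compute $h(0)=\frac{4}{\pi^2}\int_{-1}^1(1-u^2)\,\d u=\frac{4}{\pi^2}\cdot\frac43$. That $0$ is the maximum follows from Cauchy--Schwarz: since $w$ is even,
$$h(t)\le \|w\|_{L^2}\,\|w(\cdot-t)\|_{L^2}=\|w\|_{L^2}^2=h(0).$$
Alternatively, it follows from the fact that a convolution of even log-concave densities is even and unimodal.

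Items (ii) and (iii) then follow directly.
\begin{itemize}
\item[(ii)] For $\varepsilon\le\varepsilon'$ we have $\mu_{\ST}(\Omega_{\varepsilon'})-\mu_{\ST}(\Omega_\varepsilon)=\int_{\varepsilon\le|t|<\varepsilon'}h\le 2(\varepsilon'-\varepsilon)h(0)=\frac{32}{3\pi^2}(\varepsilon'-\varepsilon)$.
\item[(iii)] Here I write $\mu_{\ST}(\Omega_\varepsilon)=2\varepsilon h(0)+\int_{-\varepsilon}^{\varepsilon}(h(t)-h(0))\,\d t$. It then suffices to show $|h(t)-h(0)|\ll |t|$ near $0$. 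This holds because $h=w*w$ with $w$ of bounded variation, so $h$ is Lipschitz, with $|h'|\le \|w\|_{L^\infty}\,\mathrm{TV}(w)$. This gives the $O(\varepsilon^2)$ term.
\end{itemize}

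For (i), I use that $\Phi$ is smooth on the compact group, so it is $L_\Phi$-Lipschitz for $d_g$ (integrate $\nabla\Phi$ along a minimizing geodesic). First, $\partial\Omega_\varepsilon\subseteq\{|\Phi|=\varepsilon\}$: by continuity of $\Phi$, $\Omega_\varepsilon$ is open and its closure lies in $\{|\Phi|\le\varepsilon\}$. Consequently, any point within distance $\Delta$ of $\partial\Omega_\varepsilon$ satisfies $\varepsilon-L_\Phi\Delta<|\Phi|<\varepsilon+L_\Phi\Delta$. The measure of this set is at most $2\cdot 2L_\Phi\Delta\cdot h(0)=\frac{64L_\Phi}{3\pi^2}\Delta$. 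Finally, $d=6$ and $q=5$, so $d-q=1$, and the definition \eqref{eqboundaryfamily} yields the stated bound on $C_{\partial\boldsymbol\Omega}$. The only delicate point is the maximality of $h$ at $0$ for arbitrary $t$, which the Cauchy--Schwarz argument handles cleanly; the rest is bookkeeping.
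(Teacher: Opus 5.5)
Your proposal is correct and follows essentially the same route as the paper. Both push the Haar measure forward under $\Phi$ to the convolution density of $w(t)=\frac{2}{\pi}\sqrt{1-t^2}$ with itself, and bound that density by its value $16/(3\pi^2)$ at $0$ via Cauchy--Schwarz. Both then use the $L_\Phi$-Lipschitz property of $\Phi$ to trap the tube in the slab $\{\,||\Phi|-\varepsilon|<L_\Phi\Delta\,\}$. Finally, both control $|h(t)-h(0)|$ by $\|w\|_\infty\,\mathrm{TV}(w)\,|t|$, which gives exactly the paper's constant $8/\pi^2$ in (iii).
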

\begin{proof}
The fixed bi-invariant metric on $\ST_\Q(\A_f)$ endows $(\ST_\Q(\A_f),d_g)$ with a compact, connected, real analytic Riemannian manifold of dimension 6. Continuity of $\Phi$ gives
\begin{equation}\label{eqomegaepsilon}
\partial\Omega_\varepsilon\subset
\{\Phi=\varepsilon\}\cup\{\Phi=-\varepsilon\}.
\end{equation}
For $0<\varepsilon<2$, this inclusion is an equality,
and the two level sets are compact smooth hypersurfaces of dimension $5$ (the critical values of $\Phi$ are $-2,0,2$, so $\pm\varepsilon$ are regular values when $0<\varepsilon<2$). By Weyl's integration formula, the pushforward of the normalized Haar measure on $\SU(2)$ under $\frac{1}{2}\tr\,:\SU(2)\longrightarrow [-1,1]$ has density 
$$w(t):=\frac{2}{\pi}\one_{[-1,1]}(t)\,\sqrt{1-t^2}.$$
Since the Haar probability measure on $\ST_\Q(\A_f)$ is the product of the Haar probability measures on its two factors, for every Borel set $I\subseteq\mathbb R$,
\begin{equation}\label{eqpushfor}
\mu_{\mathrm{ST}}(\Phi^{-1}(I))=\int_I r(z)\,\d z,
\end{equation}
where
$$
 r(z)=\int_{\mathbb R}w(t)w(t-z)\,\d t,
$$
 which is even, as is $w$. In particular,
\begin{equation}\label{eqomega}
 \mu_{\mathrm{ST}}(\Omega_\varepsilon)
       =\int_{-\varepsilon}^{\varepsilon}r(z)\,\d z.
\end{equation}
On the other hand, the Cauchy-Schwarz inequality gives
\begin{equation}\label{eqdens}
 0\le r(z)\le \frac4{\pi^2}\int_{-1}^1(1-t^2)\,\d t
   =\frac{16}{3\pi^2}=r(0).
\end{equation}
The gradient bound yields
$$
 |\Phi(x)-\Phi(y)|\le L_\Phi d_g(x,y)
$$
for every  $x,y\in \ST_\Q(\A_f)$. On account of \eqref{eqomegaepsilon} and the above displayed formula, for $\Delta>0$ we have
$$
(\partial\Omega_\varepsilon)^{(\Delta)}\subset \Phi^{-1}
(I(\varepsilon,L_\Phi\Delta))
\cup
 \Phi^{-1}(I(-\varepsilon,L_\Phi\Delta))
$$
where $I(\pm\varepsilon,L_\Phi\Delta)=(\pm\varepsilon-L_\Phi\Delta,\pm\varepsilon+L_\Phi\Delta)$. By \eqref{eqdens} together with \eqref{eqpushfor}, we infer immediately (i). For (ii), let $\varepsilon'\le\varepsilon$, equation
\eqref{eqomega} yields
$$
 \mu_{\mathrm{ST}}(\Omega_\varepsilon)
       -\mu_{\mathrm{ST}}(\Omega_{\varepsilon'})
 =2\int_{\varepsilon'}^{\varepsilon}r(z)\,\d z
 \le \frac{32}{3\pi^2}\,(\varepsilon-\varepsilon'),
$$
which yields (ii). As for (iii), we have
$$|r(z)-r(0)|\le  \sup_{t\in \R}|w(t)|\int_\R |w(t-z)-w(t)|\d t\le |z| \sup_{t\in \R}|w(t)|\int_\R|w'(t)|\d t\le \frac{8}{\pi^2}|z|.$$
Integrating the above displayed inequality over $[-\varepsilon,\varepsilon]$ yields
\begin{eqnarray*}
\left|
\mu_{\mathrm{ST}}(\Omega_\varepsilon)
-\frac{32}{3\pi^2}\varepsilon
\right|
&\le& \int_{-\varepsilon}^{\varepsilon}|r(z)-r(0)|\,\d z\\
&\le& \frac8{\pi^2}\varepsilon^2,
\end{eqnarray*}
which proves (iii). The linear coefficient agrees with \cite[Lemma 4.5]{van2017}, and our argument also gives an explicit
remainder.
\end{proof}
For $X>\max\{N,3\}$ and $1\le m\le X$, no prime in
$(X,2X]$  divides $mN$. Define the signed discrepancy
\begin{equation}\label{DMurty}
 D_m(X):=
 \#\{X<p\le2X:\sigma_{p,m}\in \cl_m,\;\widetilde g_p\in\Omega_{\varepsilon_{m,p}}\}-\frac{|\cl_m|}{|\G_m|}\sum_{X<p\le2X}\mu_{\mathrm{ST}}(\Omega_{\varepsilon_{m,p}})
\end{equation}
where $\varepsilon_{m,p}:=\frac{m\sqrt{D}}{4\sqrt{p}}$. Let $\varepsilon_X: [X,2X]\longrightarrow (0,\infty)$ be the function  defined by $\varepsilon_X(t)=\frac{m\sqrt D}{4\sqrt t}$. We have 
$$
 |\varepsilon_X'(t)|=\frac{m\sqrt D}{8t^{3/2}}
                \le\frac{m\sqrt D}{8X^{3/2}}\qquad\text{for }\; X\le t\le2X.
$$
Therefore, $\varepsilon_X$ is Lipschitz with a Lipschitz constant at most $\frac{m\sqrt{ D}}{8X^{3/2}}$. For $2\le T\le X$ and $R\ge 1$, assuming that \thref{conjGRH} holds for every irreducible character of $\CST_{K_m}(\A_f)$, \thref{MovingCST} gives
$$
|D_m(X)|\ll_f
\frac{|\cl_m|}{|\G_m|}\frac{X}{T\log X}
+\frac{F_m\sqrt X}{R\log X}
+R\mathscr{A}_m\sqrt X\,T^{5/2}\log({\cal Q}_{\A_f,K_m}(X+\varsigma T)),
$$
where $F_m:=\frac{m|\cl_m|}{|\G_m|}$, $\varsigma$ is the constant from \thref{wieghtbound}, and $\mathscr{A}_m:=\mathscr{A}_{\G_m}(\cl_m)$. The finite place factor $F_m$ has a limit $\widehat{F}_f$ by divisibility that is, for every $\eta>0$, there is an integer $m_0$ such that $|F_m-\widehat F_f|<\eta$ whenever $m_0\mid m$, which we denote by 
\begin{equation}\label{eq:limdivisibility}
\lim_{m\to|\infty} F_m=\widehat{F}_f.
\end{equation}
The constant $\widehat F_f$ is strictly positive and has an explicit product formula. Furthermore, uniformly for all positive integer $m$ we have $F_m=O_f(1)$, (see Appendix \ref{app:finite-place-constant}). The local counting estimates of \cite{van2017}, specifically Theorem 5.2, Propositions 5.5–5.8, give $|\G_m|\ll_f m^7$ uniformly for all positive integers $m$. Consequently,
$$
\frac{|\cl_m|}{|\G_m|}
\ll_f\frac1m,
\qquad
|\cl_m|
=\frac{F_m}{m}|\G_m|
\ll_f m^6,
\qquad
\mathscr{A}_m\le|\cl_m|^{1/2}\ll_f m^3.
$$
Moreover, \eqref{eqconductorbis} gives
$${\cal Q}_{\A_f,K_m}\ll_f m^9.$$
Taking $R=1$ and balancing the first and third terms by choosing
$$
T=\left(\frac{X}{m^8(\log X)^4}\right)^{1/7},
$$
whenever $T\ge2$. Thus,
\begin{equation}\label{eqoptimDm}
|D_m(X)|\ll_f
\frac{m^{1/7}X^{6/7}}{(\log X)^{3/7}},
\end{equation}
which is larger than the scale $X^{1/2}/\log X$ required for Murty's predicted asymptotic. Nevertheless, we obtain the following conditional upper bound.
\begin{proposition}\thlabel{MurtyUpper}
Assume \thref{conjGRH} for every nontrivial irreducible character of $\ST(\A_f)$. Then we have
$$
 \Pi(x)\ll_f\frac{x^{6/7}}{(\log x)^{3/7}}.
$$
\end{proposition}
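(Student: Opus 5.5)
The plan is to specialize everything to the trivial level $m=1$, so that only characters of $\ST_\Q(\A_f)$ enter. Then decompose $[1,x]$ dyadically and apply the bound \eqref{eqoptimDm} on each block.

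For $m=1$ we have $\mathcal O_f/\mathcal O_f=0$, so $\G_1$ is trivial, $K_1=\Q$ and $\cl_1=\G_1$. Thus $|\cl_1|/|\G_1|=1$, $\mathscr A_1=1$ and $F_1=1$. The irreducible characters of $\CST_{K_1}(\A_f)=\ST_\Q(\A_f)\times\{1\}$ are exactly those of $\ST_\Q(\A_f)$. Hence the hypothesis of the proposition is precisely what \thref{MovingCST} requires. The trivial character needs no hypothesis beyond the standard properties of $\zeta$; alternatively, the main-term input for it is just the prime number theorem with an error term $O(\sqrt X\log X)$ under RH. If one prefers not to assume RH here, this term can instead be absorbed, since $X^{6/7}$ dominates the unconditional error only on a weaker scale, so it is cleaner to note that RH for $\zeta$ is standard in the framework of \thref{conjGRH}. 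Moreover ${\cal Q}_{\A_f,\Q}=2N^2$ is a constant depending only on $f$.

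The equivalence established above, taken with $m=1$, says that for $p\nmid N$ we have $a(p)\in\Q$ if and only if $\tilde g_p\in\Omega_{\varepsilon_{1,p}}$, where $\varepsilon_{1,p}=\sqrt D/(4\sqrt p)$. The family $(\Omega_\varepsilon)$ is increasing in $\varepsilon$. By \thref{MurtyTubes} it has uniformly bounded $C_{\partial\boldsymbol\Omega}$ with $q=5$, $d=6$, and it satisfies the variation condition with $L_\Omega=32/(3\pi^2)$. Also, $\varepsilon_X$ is Lipschitz with $V_X\ll\sqrt D/\sqrt X$. Hence for each $X>\max\{N,3\}$ we get
\[
\#\{X<p\le 2X: a(p)\in\Q\}\le \sum_{X<p\le 2X}\mu_{\ST}(\Omega_{\varepsilon_{1,p}})+|D_1(X)|.
\]
By \thref{MurtyTubes}(ii) with $\varepsilon'\to0$, we have $\mu_{\ST}(\Omega_\varepsilon)\le \tfrac{32}{3\pi^2}\varepsilon$. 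So the first sum is $\ll_f \sum_{X<p\le2X}p^{-1/2}\ll \sqrt X/\log X$. For the second term we use \eqref{eqoptimDm} with $m=1$, $R=1$ and $T=(X/(\log X)^4)^{1/7}$, which gives $|D_1(X)|\ll_f X^{6/7}(\log X)^{-3/7}$. This choice is legitimate once $X$ is large enough that $T\ge2$; note that $T\le X$ always, so $\log({\cal Q}(X+\varsigma T))\asymp_f\log X$.

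Finally, write $\Pi(x)$ as a sum over blocks $(x/2^{k+1},x/2^k]$ for $k\ge0$. The blocks with $x/2^{k+1}\le X_0(f)$ contribute $O_f(1)$, where $X_0(f)$ is a threshold beyond which $X>\max\{N,3\}$ and $T\ge2$. Each remaining block contributes $\ll_f (x/2^k)^{6/7}(\log(x/2^k))^{-3/7}$. This sum is dominated by a geometric series, giving $\ll_f x^{6/7}(\log x)^{-3/7}$. To handle the logarithm uniformly, split at $2^k\le\sqrt x$, where $\log(x/2^k)\asymp\log x$; the blocks with $2^k>\sqrt x$ contribute $O(x^{3/7})$. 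The main-term contribution is only $O(\sqrt x/\log x)$.

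The only delicate point I anticipate is bookkeeping rather than substance. One must check that \thref{MovingCST} really applies with the trivial extension under exactly the stated hypothesis, that is, that $E_2^\pm$ disappears since $\Irr(\G_1)=\{1\}$. One must also check that the $F_m\sqrt X/(R\log X)$ term and the conductor factor are harmless at $m=1$.
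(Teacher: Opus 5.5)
Your proposal is correct and is essentially the paper's proof: you specialize \eqref{eqoptimDm} to $m=1$ with the same choice of $T$, bound the main term via \thref{MurtyTubes} (you use (ii), the paper uses (iii)), and sum dyadically, where the paper's stopping at $X_t\asymp\sqrt x$ with $\Pi(X_t)\le 2\sqrt x$ is equivalent to your split at $2^k\le\sqrt x$. One clarification: the trivial character needs no hypothesis at all, because in the proofs of \thref{EffCST} and \thref{MovingCST} it only produces the term $c_{0,T}^\pm\frac{|\cl|}{|\G|}\pi_F^{S}(x)$, which goes into the main term and $E_1^\pm$ using only Chebyshev's bound; your detour through RH for $\zeta$ is therefore unnecessary, and the suggestion that an unconditional prime number theorem error could be absorbed into $X^{6/7}$ would not work, though the point is moot.
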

\begin{proof}
Let $X>\max\{N,3\}$. For $m=1$, \eqref{DMurty} may be written as 
$$D_1(X)=\Pi(2X)-\Pi(X)
-\sum_{X<p\le2X}
\mu_{\ST}
\!\left(\Omega_{\sqrt D/(4\sqrt p)}\right).
$$
By \thref{MurtyTubes}(iii), we have
$$
\sum_{X<p\le2X}
\mu_{\ST}
\!\left(\Omega_{\sqrt D/(4\sqrt p)}\right)
\ll_f\sum_{X<p\le2X}p^{-1/2}
\ll_f\frac{\sqrt X}{\log X}.
$$
Hence, from \eqref{eqoptimDm}, it follows that
\begin{equation}\label{eq2XX}
\Pi(2X)-\Pi(X)\ll_f \frac{X^{6/7}}{(\log X)^{3/7}}.
\end{equation}
For sufficiently large $x$, put $t=\left\lfloor\frac{\log x}{2\log 2}\right\rfloor$. 
Set $X_j:=x/2^{j}$ for $0\le j\le t$. Then $\sqrt x\le X_t<2\sqrt x$.  Apply \eqref{eq2XX} to the intervals $(X_j,2X_j]$ and use  $\Pi(X_t)\le2\sqrt x$ in the dyadic identity 
$$
\Pi(x)=\Pi(X_t)+\sum_{j=1}^{t}(\Pi\!(X_{j-1})-\Pi(X_j)),
$$
one obtains the desired bound.
\end{proof}

We retain the preceding notation and describe a probabilistic model for the discrepancy estimate conjectured below. Let $B>0$, and let $m=m(X)$ be a positive integer-valued function such that $m(X)\le(\log X)^B$ for all sufficiently large $X$, and every fixed positive integer eventually divides $m(X)$. By the limit by divisibility defining $\widehat F_f$ in \eqref{eq:limdivisibility}, we have $F_m\to\widehat F_f$; in particular, $F_m=O_f(1)$ along this function. For each $X$, we model the indicators of the joint conditions $\widetilde g_p\in\Omega_{\varepsilon_{m,p}}$ and $\sigma_{p,m}\in\cl_m$ by independent Bernoulli random variables $B_p$, indexed by primes $X<p\le2X$. Motivated by the hybrid limiting distribution established in \cite[Proposition 3.1]{amri2023}, we prescribe the success probabilities
$$
\mathbb P(B_p=1)=r_{m,p}:=\frac{|\cl_m|}{|\G_m|}
\mu_{\ST}(\Omega_{\varepsilon_{m,p}}),
\qquad
\mathbb P(B_p=0)=1-r_{m,p}.
$$
Define the model count by $\mathcal N_X:=\dsum_{X<p\leq2X}B_p$. The expectation equals the product-density main term
$$
M_X:=\mathbb E[\mathcal N_X]
=\frac{|\cl_m|}{|\G_m|}
\sum_{X<p\leq2X}
\mu_{\ST}(\Omega_{\varepsilon_{m,p}}).
$$
By \thref{MurtyTubes}-(iii),
$$
M_X=\frac{8\sqrt D}{3\pi^2}F_m\sum_{X<p\leq2X}p^{-1/2}+O_f\left(\frac{mF_m}{\log X}\right)\ll_f\frac{\sqrt X}{\log X}.
$$
Here we use the boundedness of $F_m$ along the chosen function and the bound $m\leq(\log X)^B$. Independence gives the variance bound
$$
\operatorname{Var}(\mathcal N_X)=\sum_{X<p\leq2X}r_{m,p}(1-r_{m,p})
\leq M_X\ll_f\frac{\sqrt X}{\log X}.
$$
Consequently, Chebyshev's inequality gives, for every fixed $\eta>0$,
$$
\mathbb P\left(|\mathcal N_X-M_X|>\eta\frac{\sqrt X}{\log X}\right)\ll_f\frac{\log X}{\eta^2\sqrt X}.
$$
Since the right-hand side tends to zero, we infer that
$$
\mathcal N_X-M_X
=o_{\mathbb P}\left(\frac{\sqrt X}{\log X}\right).
$$
This substantiates the following conjecture.
\begin{conjecture}\thlabel{Murtyoptimalerror}
Let $B>0$ and $m=m(X)$ be a positive integer-valued function such that $m\le(\log X)^B$ for all sufficiently large $X$, and every fixed positive integer $n$ divides $m(X)$ for all sufficiently large $X$. Then the discrepancy defined in \eqref{DMurty} satisfies
 $$
D_{m}(X)=o\left(\frac{\sqrt X}{\log X}\right)\qquad(X\to\infty).
 $$ 
\end{conjecture}
Such a cofinal function exists for every $B>0$. Indeed, for any $B>0$, take $m(X)=k(X)!$, where $k(X)$ is the largest integer satisfying $k(X)!\le(\log X)^B$. As $X\to\infty$, we have $k(X)\to\infty$, so every fixed positive integer divides $m(X)$ for all sufficiently large $X$. Then  \thref{prop:finite-place-constant} gives $F_m\to \widehat{F}_f$.
\begin{thm}
Assume \thref{Murtyoptimalerror}. Then
$$
 \Pi(x)\sim\frac{16\sqrt D\widehat F_f}{3\pi^2}\frac{\sqrt x}{\log x}\qquad\text{as}\;x\to\infty.
$$
\end{thm}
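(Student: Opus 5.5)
The plan is to run the dyadic argument of \thref{MurtyUpper} again, this time with the sharp discrepancy bound of \thref{Murtyoptimalerror} in place of \eqref{eqoptimDm}, and to evaluate the main term $M_X$ asymptotically instead of merely bounding it. Throughout, fix $B>0$ and a cofinal function $m=m(X)\le(\log X)^B$, for instance $m(X)=k(X)!$ as described after \thref{Murtyoptimalerror}. For $X$ large we have $m\le X$ and $X>\max\{N,3\}$, so no prime in $(X,2X]$ divides $mN$. The equivalence $a(p)\in\Q\Leftrightarrow \widetilde g_p\in\Omega_{\varepsilon_{m,p}}$ and $\sigma_{p,m}\in\cl_m$ then holds for every such prime. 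Consequently
$$\Pi(2X)-\Pi(X)=\frac{|\cl_m|}{|\G_m|}\sum_{X<p\le 2X}\mu_{\ST}(\Omega_{\varepsilon_{m,p}})+D_m(X).$$

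\textbf{Step 1: the main term.} By \thref{MurtyTubes}(iii) with $\varepsilon_{m,p}=m\sqrt D/(4\sqrt p)$, we have
$$\frac{|\cl_m|}{|\G_m|}\mu_{\ST}(\Omega_{\varepsilon_{m,p}})=\frac{8\sqrt D}{3\pi^2}F_m\,p^{-1/2}+O_f\!\left(\frac{|\cl_m|}{|\G_m|}\frac{m^2}{p}\right).$$
Summing the error over the $\ll X/\log X$ primes in $(X,2X]$ gives $O_f(F_m m/\log X)=O_f((\log X)^{B-1})$, using $F_m=O_f(1)$. This is $o(\sqrt X/\log X)$. The prime number theorem with partial summation gives
$$\sum_{X<p\le 2X}p^{-1/2}=(1+o(1))\frac{2(\sqrt{2X}-\sqrt X)}{\log X},$$
and $F_m\to\widehat F_f$ along our cofinal $m$ by \eqref{eq:limdivisibility}. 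Combining these with \thref{Murtyoptimalerror}, and writing $c_f:=16\sqrt D\widehat F_f/(3\pi^2)$, I obtain
$$\Pi(2X)-\Pi(X)=c_f\frac{\sqrt{2X}-\sqrt X}{\log X}+o\!\left(\frac{\sqrt X}{\log X}\right)\qquad (X\to\infty).$$

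\textbf{Step 2: dyadic summation.} Fix $\delta\in(0,1/2)$ and put $X_j=x/2^j$ for $0\le j\le t$, where $t$ is chosen so that $x^\delta\le X_t<2x^\delta$. The $o(\cdot)$ in Step 1 is a statement as $X\to\infty$, so it holds uniformly over all $X_j\ge x^\delta$; this uniformity is exactly why I truncate at $x^\delta$ rather than at a fixed height. By telescoping,
$$\Pi(x)=\Pi(X_t)+\sum_{j=1}^t\bigl(\Pi(X_{j-1})-\Pi(X_j)\bigr).$$
The first term satisfies $\Pi(X_t)\le 2x^\delta=o(\sqrt x/\log x)$. For the error terms, I use $\log X_j\ge\delta\log x$ and sum the geometric series $\sum_j\sqrt{X_j}\ll\sqrt x$, which gives a total of $o(\sqrt x/\log x)$. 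For the main terms, I split the range of $j$:
\begin{itemize}
\item For the indices with $X_j\ge x/(\log x)^2$, we have $\log X_j=(1+o(1))\log x$ uniformly. These terms telescope to $(1+o(1))\,c_f\sqrt x/\log x$.
\item For the remaining indices, $\log X_j\ge \delta\log x$ and the geometric sum is dominated by its largest term. They therefore contribute $\ll \sqrt{x/(\log x)^2}/(\delta\log x)=o(\sqrt x/\log x)$.
\end{itemize}
Together these yield $\Pi(x)\sim c_f\sqrt x/\log x$, as claimed.

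The only delicate point is bookkeeping in Step 2: the uniformity of the $o(1)$ from \thref{Murtyoptimalerror} across all dyadic blocks, and the fact that $\log X_j$ varies from block to block. Both are handled by truncating at $x^\delta$ and splitting off the blocks with $X_j< x/(\log x)^2$ as above. The convergence $F_m\to\widehat F_f$ is only needed along the chosen cofinal function, which is exactly what \eqref{eq:limdivisibility} provides.
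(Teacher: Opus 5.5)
Your proof is correct and follows the paper's route. You derive the same block asymptotic for $\Pi(2X)-\Pi(X)$ from \thref{MurtyTubes}(iii), the divisibility limit $F_m\to\widehat F_f$ and \thref{Murtyoptimalerror}, then sum over dyadic blocks truncated at a small power of $x$ (the paper uses $x^{1/4}$). The only difference is in the final summation: the paper takes each block's limit for fixed $j$ and justifies interchanging limit and sum by dominated convergence with the uniform bound $\ll 2^{-j/2}$, whereas you split the blocks at $X_j=x/(\log x)^2$, telescope the main terms directly, and control the errors with a uniform $\eta$.
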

\begin{proof}
Choose a function $m=m(X)$ satisfying the conditions of \thref{Murtyoptimalerror}. For sufficiently large $X$, we have $m\le X$. Since $m$ is cofinal for divisibility, $F_m\to\widehat F_f$ as $X\to\infty$.
By \thref{MurtyTubes}(iii) we have
\begin{equation}\label{eqdyadicsum}
    \frac{|\cl_m|}{|\G_m|}\sum_{X<p\le2X}\mu_{\mathrm{ST}}(\Omega_{\varepsilon_{m,p}})
   =\frac{8\sqrt{D}F_m}{3\pi^2} \sum_{X<p\le2X}p^{-1/2}
             +O_f\left(\frac{mF_m}{\log X}\right).
\end{equation}
Consequently, we can decompose the discrepancy $D_m(X)$ as
\begin{eqnarray}\label{eqDiscsplit}
D_m(X)&=&\Pi(2X)-\Pi(X)
              -\frac{8\sqrt{D}}{3\pi^2}\widehat F_f \sum_{X<p\le2X}\frac{1}{p^{1/2}}\nonumber\\
 &+&\frac{8\sqrt{D}}{3\pi^2}(\widehat F_f-F_m)\sum_{X<p\le2X}\frac{1}{p^{1/2}}
               +O_f\left(\frac{mF_m}{\log X}\right)
\end{eqnarray}          
Note that,
$$
 \frac{mF_{m}}{\log X} =o\left(\frac{\sqrt X}{\log X}\right).
$$
On the other hand, we have
$$ \dsum_{X<p\le 2X}\frac{1}{p^{1/2}}\ll\frac{\sqrt X}{\log X}.$$ 
It follows that
$$\bigl(\widehat F_f-F_{m}\bigr)\sum_{X<p\le2X}\frac{1}{p^{1/2}}
 =o\!\left(\frac{\sqrt X}{\log X}\right).$$
Thus, \eqref{eqDiscsplit}, \eqref{eqdyadicsum} and \thref{Murtyoptimalerror} yield
\begin{equation}\label{murty:dyadicmean}
 \Pi(2X)-\Pi(X)=\frac{8\sqrt{D}\widehat F_f}{3\pi^2} \sum_{X<p\le 2X}\frac{1}{p^{1/2}}+o\left(\frac{\sqrt X}{\log X}\right).
\end{equation}
The prime number theorem and partial summation give
$$
  \sum_{X<p\le 2X}\frac{1}{p^{1/2}}=2(\sqrt2-1)\frac{\sqrt X}{\log X}+o\!\left(\frac{\sqrt X}{\log X}\right).
$$
Substitution in \eqref{murty:dyadicmean} yields
\begin{equation}\label{murty:dyadic}
 \Pi(2X)-\Pi(X)=\frac{16\sqrt{D}\widehat{F}_f}{3\pi^2}(\sqrt2-1)\frac{\sqrt X}{\log X} +o\left(\frac{\sqrt X}{\log X}\right).
\end{equation}
For sufficiently large $x$, put $t=\left\lfloor\frac{3\log x}{4\log 2}\right\rfloor$. 
Set $X_j:=x/2^{j}$ for $0\le j\le t$ and $x^{1/4}\le X_t<2 x^{1/4}$. Then,
$$
\Pi(x)=\Pi(X_t)+\sum_{j=1}^{t}(\Pi(X_{j-1})-\Pi(X_j)).
$$
Since $\Pi(X_t)\le 2x^{1/4}$, we have 
$$\lim_{x\to\infty}\frac{\Pi(X_t)}{\sqrt{x}/\log x}=0.$$
For $1\le j\le t$, one has $\log X_j\ge \frac14\log x$. It then follows that for every fixed $j$, \eqref{murty:dyadic} implies
$$
\lim_{x\to\infty} \frac{\Pi(X_{j-1})-\Pi(X_j)}{\sqrt x/\log x}
       =\frac{16\sqrt{D}\widehat{F}_f}{3\pi^2}(\sqrt2-1)2^{-j/2}.
$$
Now, write 
$$\frac{\Pi(x)}{\sqrt x/\log x}
 =\dfrac{\Pi(X_t)}{\sqrt{x}/\log x}+\sum_{j= 1}^{t}\frac{\Pi(X_{j-1})-\Pi(X_j)}{\sqrt x/\log x}.$$
Define the summand to be zero for $j>t$. By \eqref{murty:dyadic}, we have 
$$\frac{\Pi(X_{j-1})-\Pi(X_j)}{\sqrt x/\log x}\ll_f 2^{-j/2}$$
Consequently, the dominated convergence theorem permits interchanging the limit and the sum, yielding
$$
 \lim_{x\to\infty}\frac{\Pi(x)}{\sqrt x/\log x}
 =\frac{16\sqrt{D}\widehat{F}_f}{3\pi^2}(\sqrt2-1)\sum_{j=1}^{\infty}2^{-j/2}=\frac{16\sqrt{D}\widehat{F}_f}{3\pi^2},
 $$
since $\dsum_{j=1}^{\infty}2^{-j/2}=1/(\sqrt2-1)$.
This proves the asserted asymptotic.
\end{proof}
\appendix
\section{Appendix: The finite-place constant in Murty's conjecture}
\label{app:finite-place-constant}

In \cite[Corollary 5.10]{van2017}, Van Hirtum states the existence and finiteness of the divisibility limit of the finite-place factor and computes the unramified full-image factors in
\cite[Propositions 5.5--5.8 and Lemma 5.9]{van2017}.
The factorization over individual primes used for the empirical evidence in \cite[\S 7.4]{van2017} relies on Assumption 7.2 therein. Here we give a local counting argument that also applies at ramified and exceptional primes. It proves strict positivity and yields a product formula with a joint exceptional factor, which retains any dependence between the exceptional-prime
images. The adelic decomposition is analogous to
\cite[Lemma 12]{Cojocaru2016}.

Throughout this appendix, $f$ is a normalized cuspidal newform of weight two with trivial nebentypus, real quadratic coefficient field $K_f$, and no nontrivial inner twists. Let $\mathcal O_f$ be the ring of integers of $K_f$, and let $\overline{\phantom{x}}$ denote the nontrivial automorphism of $K_f/\Q$. Fix compatible integral realizations of the Galois representations attached to $f$, and write
$$
\widehat\rho_f:G_\Q\longrightarrow
\GL_2(\mathcal O_f\otimes_\Z\widehat\Z),
\qquad
\Gamma:=\operatorname{Im}\widehat\rho_f.
$$
For every positive integer $m$, let $\G_m$ be the reduction of
$\Gamma$ modulo $m\mathcal O_f$, and put
$$
\cl_m:=
\left\{\gamma\in\G_m:
\tr\gamma-\overline{\tr\gamma}
\equiv0\pmod{m\mathcal O_f}\right\},
\qquad
F_m:=\frac{m|\cl_m|}{|\G_m|}.
$$
For each rational prime $\ell$, set
$$
\mathcal O_{f,\ell}:=\mathcal O_f\otimes_\Z\Z_\ell,
\qquad
H_\ell:=
\left\{g\in\GL_2(\mathcal O_{f,\ell}):
\det g\in\Z_\ell^\times\right\},
\qquad
\widehat H:=\prod_\ell H_\ell.
$$
The determinant condition gives $\Gamma\subseteq\widehat H$.
By the adelic open-image theorem
\cite[Theorem 2.3.1]{loeffler2017}, $\Gamma$ is open in $\widehat H$.
We may therefore choose a positive integer $m_f$, divisible by $2$
and by every prime ramified in $K_f$, such that
$$
\Gamma=\pi_{m_f}^{-1}(\G_{m_f}),
\qquad
\pi_{m_f}:\widehat H\longrightarrow
\GL_2(\mathcal O_f/m_f\mathcal O_f),
$$
where $\pi_{m_f}$ is the reduction map. Indeed, $\Gamma$ contains
a principal congruence subgroup of $\widehat H$.

\begin{pro}\thlabel{prop:finite-place-constant}
The limit
$$
\widehat F_f:=\lim_{m\to\mid\infty}F_m
$$
exists and belongs to $(0,\infty)$, where the positive integers are
ordered by divisibility. Moreover,
\begin{equation}\label{eq:app-euler-product}
\widehat F_f
=\widehat F_{f,m_f}\prod_{\ell\nmid m_f}\widetilde F_\ell,
\qquad
0<\widehat F_{f,m_f}<\infty,
\end{equation}
where
\begin{equation}\label{eq:app-local-factors}
\widetilde F_\ell=
\begin{cases}
\displaystyle
\frac{\ell^3(\ell^2+\ell+1)}
     {(\ell+1)(\ell^4-1)},
&\ell\text{ inert in }K_f,\\[8pt]
\displaystyle
\frac{\ell^2(\ell^3+\ell^2-\ell-2)}
     {(\ell+1)(\ell^2-1)^2},
&\ell\text{ split in }K_f.
\end{cases}
\end{equation}
The factor $\widehat F_{f,m_f}$ is determined by the joint image at
the primes dividing $m_f$ and is defined in
\eqref{eq:app-exceptional-factor}. It need not equal the product of
the densities associated with the individual local projections.
\end{pro}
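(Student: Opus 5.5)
The plan is to reduce everything to local (adelic) Haar-measure computations using the open-image description $\Gamma=\pi_{m_f}^{-1}(\G_{m_f})\subseteq\widehat H=\prod_\ell H_\ell$. For a positive integer $m$, factor it as $m=m_1m_2$, where $m_1$ is supported on the primes dividing $m_f$ and $(m_2,m_f)=1$. Whenever $m_f\mid m_1$, the description of $\Gamma$ together with the Chinese remainder theorem gives
$$
\G_m\cong \G_{m_1}\times\prod_{\ell^a\parallel m_2}H_\ell(\ell^a),
$$
where $H_\ell(\ell^a)$ is the reduction of $H_\ell$ modulo $\ell^a\mathcal O_{f,\ell}$. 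The congruence $\tr\gamma-\overline{\tr\gamma}\equiv0\pmod{m\mathcal O_f}$ also splits coordinatewise under this isomorphism. Hence $\cl_m$ factors accordingly, and
$$
F_m=F^{\rm exc}_{m_1}\prod_{\ell^a\parallel m_2}F_{\ell^a},\qquad F_{\ell^a}:=\frac{\ell^a\,\#\{g\in H_\ell(\ell^a):\tr g-\overline{\tr g}\equiv0\}}{|H_\ell(\ell^a)|}.
$$
Equivalently, let $\psi(g)\in\Z_\ell$ denote the $\sqrt D$-coordinate of $\tr g$ (for $\ell$ odd and unramified this is a clean $\Z_\ell$-linear functional). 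Then $F_{\ell^a}=\ell^a\,\mu_{H_\ell}(\psi^{-1}(\ell^a\Z_\ell))$, and similarly $F^{\rm exc}_{m_1}=m_1\,\mu_{\Gamma_{\rm exc}}(\cdots)$ on the projection $\Gamma_{\rm exc}$ of $\Gamma$ to $\prod_{\ell\mid m_f}H_\ell$.

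Next I will show that, for $\ell\nmid m_f$, $F_{\ell^a}$ is independent of $a\ge1$ and equals $\widetilde F_\ell$. The differential of $\psi$ on the tangent space $\{X:\tr(g^{-1}X)\in\Z_\ell\}$ of $H_\ell$ at $g$ vanishes only on a proper closed locus, essentially the matrices that are scalar modulo the $\Z_\ell$-structure. Off that locus, $\psi$ is a submersion, so by Hensel's lemma the fibres of $\psi$ modulo $\ell^{a+1}$ lift those modulo $\ell^a$ uniformly. Consequently $\ell^a\mu(\psi^{-1}(\ell^a\Z_\ell))$ stabilizes, and the singular locus must be checked by hand to contribute consistently. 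I would then evaluate the stable value by a direct count modulo $\ell$, checking against modulo $\ell^2$. The count splits according to $\mathcal O_f/\ell\cong\mathbb F_{\ell^2}$ (inert) or $\mathbb F_\ell\times\mathbb F_\ell$ (split), with the determinant in $\mathbb F_\ell^\times$. In the split case, the condition becomes equality of the traces of the two components with equal determinants, counted using the class sizes in $\GL_2(\mathbb F_\ell)$. In the inert case, it becomes $\tr g\in\mathbb F_\ell$ for $g\in\GL_2(\mathbb F_{\ell^2})$ with $\det g\in\mathbb F_\ell^\times$. These counts should produce the displayed formulas \eqref{eq:app-local-factors}.

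Expanding those formulas gives $\widetilde F_\ell=1+O(\ell^{-2})$, so $\prod_{\ell\nmid m_f}\widetilde F_\ell$ converges absolutely to a positive number. Each factor is positive, since scalar matrices in $H_\ell$ lie in the relevant set. Existence of the divisibility limit then follows. Given $\eta>0$, take $m_0=m_f^{k}\prod_{\ell\le L}\ell$ with $L$ large, so that the tail product lies within $\eta$ of $1$. Then every multiple of $m_0$ has $F_m$ within $O(\eta)$ of $\widehat F_{f,m_f}\prod_{\ell\nmid m_f}\widetilde F_\ell$, provided the exceptional factor has stabilized.

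The main obstacle is the exceptional factor $\widehat F_{f,m_f}:=\lim F^{\rm exc}_{m_1}$ along $m_1$ supported on primes dividing $m_f$. Here the image is only an open subgroup $\Gamma_{\rm exc}$, possibly non-product across these primes, and ramification at $\ell\mid D$ and $\ell=2$ spoils the clean form of $\psi$. I would first treat $\Gamma_{\rm exc}$ as a compact $\prod_{\ell\mid m_f}\Z_\ell$-analytic group and $\psi$ as an analytic map to $\prod_\ell\Z_\ell$. The key input is that $\psi$ is submersive outside a Haar-null analytic subset, namely the scalar-type locus. Then the pushforward of Haar measure has a density that is locally constant near $0$ off that null set, and a dominated convergence argument on shrinking neighbourhoods shows that $m_1\mu(\psi^{-1}(m_1\cdots))$ converges to the density at $0$. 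Finiteness comes from the same local density bound. Positivity follows because $\Gamma_{\rm exc}$ contains an open neighbourhood of a submersive point with $\psi=0$: take a suitable non-scalar element of an open congruence subgroup, whose trace is rational. The remaining uniform bound $F_m=O_f(1)$ follows from the product factorisation together with boundedness of each stage.
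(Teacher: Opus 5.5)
Your reduction to local factors through $\Gamma=\Gamma_{m_f}\times\prod_{\ell\nmid m_f}H_\ell$ matches the paper. However, the key claim of your second paragraph is false. For $\ell\nmid m_f$, $F_{\ell^a}$ is not independent of $a$, and the count modulo $\ell$ does not give $\widetilde F_\ell$.

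Count directly: in the inert case take $g\in\GL_2(\mathbb F_{\ell^2})$ with $\tr g\in\mathbb F_\ell$ and $\det g\in\mathbb F_\ell^\times$; in the split case take pairs in $\GL_2(\mathbb F_\ell)^2$ with equal trace and equal determinant. This gives $F_\ell=\ell(\ell^3+\ell-1)/(\ell^4-1)$ (inert) and $F_\ell=\ell(\ell^3-\ell-1)/(\ell^2-1)^2$ (split). These fall short of the displayed $\widetilde F_\ell$ by $\ell/((\ell+1)(\ell^4-1))$ and $\ell/((\ell+1)(\ell^2-1)^2)$ respectively. They are exactly the $k=1$ cases of the $k$-dependent formulas for $F_{\ell^k}$ that the paper quotes from Van Hirtum.

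The defect sits in the locus you deferred. The non-submersive set $S_\ell=\{g\in H_\ell: g\equiv cI \bmod \ell,\ c\in\mathbb F_\ell^\times\}$ lies over the critical value $0$, and it does not contribute consistently. Write $g=cI+\ell h$ with $c\in\Z_\ell^\times$. The condition $\det g=c^2+c\ell\tr h+\ell^2\det h\in\Z_\ell$ forces $\psi(g)$, which is $\ell$ times the $\sqrt D$-coordinate of $\tr h$, to lie in $\ell^2\Z_\ell$. Hensel lifting keeps the contribution of $H_\ell\setminus S_\ell$ constant, so $F_{\ell^2}-F_\ell=(\ell^2-\ell)\mu(S_\ell)>0$. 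The deeper strata $g\equiv cI \bmod \ell^j$ keep adding contributions, and $\widetilde F_\ell$ is reached only as the limit of a geometric series. Your mod-$\ell$ versus mod-$\ell^2$ check would therefore fail, and the displayed local factors would not come out.

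This error propagates to the rest of the argument.

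\begin{itemize}
\item \textbf{Divisibility limit.} With $m_0=m_f^k\prod_{\ell\le L}\ell$, a multiple $m$ with $\ell\parallel m$ carries the factor $F_\ell$, which differs from $\widetilde F_\ell$ by a fixed amount for small $\ell$. You must instead require $\ell^{k_\ell}\mid m$ with $k_\ell$ large for each $\ell\le L$.
\item \textbf{Tail $\ell>L$.} Here primes may divide $m$ to any power, so you need a bound uniform in the exponent, such as $1\le F_{\ell^k}\le\widetilde F_\ell=1+O(\ell^{-2})$. You only had this through the false identity $F_{\ell^a}=\widetilde F_\ell$.
\item \textbf{Exceptional factor.} The same phenomenon means dominated convergence is not automatic, because the critical locus lies inside $\psi^{-1}(0)$. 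You need a bound, uniform in $k$, on the contribution of shrinking neighbourhoods of that locus to $\ell^k\mu(\psi\in\ell^k\Z_\ell)$. The proposal does not supply this.
\end{itemize}

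The paper obtains exactly this bound by writing $g=A+\omega B$ and stratifying by $B=\ell^jB_0$, with $B_0$ primitive and trace-zero. On each stratum the determinant condition is a linear congruence in $A$ with a unit coefficient. This gives the exact stratum count $c_j\ell^{-6e-2j}$ for $k\ge e+j$ and the uniform bound $(1-\ell^{-3})\ell^{-2j}$. The limit is then a convergent geometric series for every compact open $U$ at every prime, including $2$ and the ramified primes via the shift $r_\ell$. For $\ell\nmid m_f$, the paper uses the exact formulas for $F_{\ell^k}$, which give both the limit $\widetilde F_\ell$ and the uniform tail bound.

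Your positivity argument, using a non-critical point with rational trace inside $K(m_f)$, is workable once convergence has been established in this way.
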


We first establish the local counting result needed at the primes
dividing $m_f$. Fix an integral basis
$\mathcal O_f=\Z\oplus\Z\omega$, and write
$$
\omega^2-u\omega+v=0,
\qquad u,v\in\Z.
$$
Then $\overline\omega=u-\omega$.

\begin{lem}\thlabel{lem:app-local-density}
Fix a rational prime $\ell$, for $g\in H_\ell$ define $t(g)\in\Z_\ell$ by
$$
\tr g=A(g)+\omega t(g),
\qquad A(g)\in\Z_\ell.
$$
Let $\mu_\ell$ be the normalized Haar measure on $H_\ell$.
For every compact open subset $U\subseteq H_\ell$, the limit
$$
L_\ell(U):=
\lim_{k\to\infty}\ell^k
\mu_\ell\{g\in U:t(g)\in\ell^k\Z_\ell\}
$$
exists and is finite. If $I_2\in U$, then $L_\ell(U)>0$.
\end{lem}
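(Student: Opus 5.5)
The plan is to reduce to small cosets and then treat regular and critical points of $t$ separately.

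\emph{Reduction.} Since $U$ is compact open, it is a finite disjoint union of cosets $g_0K_n$, where
$$K_n:=\{g\in H_\ell:\ g\equiv I_2 \pmod{\ell^n\mathcal O_{f,\ell}}\}$$
and $n$ may be taken as large as we like by refining. Both $\mu_\ell$ and $\{g:t(g)\in\ell^k\Z_\ell\}$ are compatible with this decomposition. So it suffices to show that, for each such coset, $\ell^k\mu_\ell(g_0K_n\cap\{t\in\ell^k\Z_\ell\})$ converges to a finite limit. For positivity it suffices to show the limit is positive for one coset $K_n$ containing $I_2$ with $K_n\subseteq U$.

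\emph{Coordinates.} $H_\ell$ is an $\ell$-adic analytic group of dimension $7$, cut out in $\GL_2(\mathcal O_{f,\ell})\cong$ an open subset of $\Z_\ell^8$ by one condition: the $\omega$-coordinate of $\det g$ vanishes. Its Lie algebra is $\mathfrak h=\{X\in M_2(\mathcal O_{f,\ell}):\operatorname{tr}X\in\Z_\ell\}$. For $n$ large, the exponential (or the Cayley map) identifies $\ell^n\mathfrak h$ with $K_n$ and carries additive Haar measure to $\mu_\ell$ up to a constant. The function $t$ is the $\omega$-coordinate of $\operatorname{tr}$, a $\Z_\ell$-linear function of the entries. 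Hence $\phi(X):=t(g_0\exp X)$ is analytic on $\ell^n\mathfrak h$.

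\emph{Regular points.} The differential of $t$ at $g$ is $X\mapsto \omega\text{-part of }\operatorname{tr}(gX)$ on $\mathfrak h$. Test with $X=cE_{12}$, $X=cE_{21}$, $X=c(E_{11}-E_{22})$ (where $c\in\mathcal O_{f,\ell}$) and $X=I_2$. These show the differential vanishes only if $g_{12}=g_{21}=0$, $g_{11}=g_{22}$, and then also $t(g)=0$. In other words, the critical points are exactly the scalars $\lambda I_2$ with $\lambda\in\Z_\ell^\times$. If $g_0K_n$ contains no such scalar, then for $n$ large $\phi$ is a submersion with uniformly bounded-below derivative. By the $\ell$-adic implicit function theorem, change coordinates so that $\phi$ becomes a coordinate function, $\phi=\phi(g_0)+\ell^{n+c}y_1$. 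The pushforward of Haar measure on the coset under $t$ is then a constant multiple of Haar measure on the ball $t(g_0)+\ell^{n+c}\Z_\ell$. Consequently $\ell^k\mu_\ell(\cdot)$ is eventually constant: it is positive if $0$ lies in that ball and $0$ otherwise.

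\emph{Critical cosets (main obstacle).} It remains to treat cosets $\lambda K_n$ with $\lambda\in\Z_\ell^\times$. Multiplication by a $\Z_\ell$-scalar scales $t$ by a unit, so we may take $\lambda=1$. Write $g=I+Y$ with $Y\in\ell^nM_2(\mathcal O_{f,\ell})$. Since $\det(I+Y)=1+\operatorname{tr}Y+\det Y$, the defining condition forces the $\omega$-part of $\operatorname{tr}Y$ to equal minus the $\omega$-part of $\det Y$. Thus $t(g)=-Q(Y)$, where $Q(Y)$ is the $\omega$-coordinate of $\det Y$. Parametrizing $K_n$ by the $6$ free $\Z_\ell$-coordinates that remain after eliminating one coordinate of $\operatorname{tr}Y$ via this relation, $t$ becomes $-Q$ plus higher-order terms. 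Here $Q$ is a quadratic form over $\Z_\ell$ in these coordinates; expanding $\det Y$ in the basis $1,\omega$, I expect it to be nondegenerate of rank at least $3$ on the relevant space.

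For such a form in at least $3$ variables, the classical Hensel/Igusa counting gives an exact asymptotic: $\#\{y\bmod \ell^k: Q(y)\equiv0\}\sim c_Q\,\ell^{k(r-1)}$ with $c_Q>0$ (an isotropic form has a nonsingular zero, which lifts). After a local analytic change of variables (Morse lemma over $\Z_\ell$) the higher-order terms can be absorbed. Hence $\ell^k\mu_\ell(K_n\cap\{t\in\ell^k\Z_\ell\})$ converges to $c_Q$ times the volume normalization, which is finite and positive. Verifying the rank and isotropy of $Q$, uniformly including $\ell=2$ and ramified $\ell$, is the step I expect to be the main obstacle. My first attempt would be an explicit computation with the basis $1,\omega$ and $\omega^2=u\omega-v$.

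\emph{Conclusion.} Summing over the finitely many cosets gives existence and finiteness of $L_\ell(U)$. If $I_2\in U$, the critical coset through $I_2$ contributes a strictly positive amount and all other contributions are non-negative, so $L_\ell(U)>0$.
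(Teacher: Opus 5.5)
\textbf{Verdict: genuine gap.} Your plan is a legitimate alternative route: finitely many cosets, the $\ell$-adic implicit function theorem away from the critical locus, and a quadratic-form count near it. Your identification of the critical points of $t$ on $H_\ell$ as the scalars $\lambda I_2$, $\lambda\in\Z_\ell^\times$, is correct.

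\textbf{A fix in the regular part.} The lower bound on $|dt|$ is not uniform if you refine every coset to one large level $n$. Cosets adjacent to the scalar line have $|dt|$ as small as $\ell^{1-n}$. Fix the critical cosets $\lambda K_{n_1}$ first, then refine the compact remainder (on which $|dt|$ is bounded below) to a level depending on that bound.

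\textbf{The gap: the critical coset.} This is where the content of the lemma lies, and you leave it as an expectation. The picture you describe there is also not right.
\begin{enumerate}
\item One condition on eight coordinates leaves seven free coordinates on $K_n$, not six.
\item $K_n$ contains the whole curve $\{\lambda I_2:\lambda\in\Z_\ell,\ \lambda\equiv1\bmod\ell^n\}$ of critical points. So the quadratic part of $t$ in seven free coordinates has the scalar direction in its radical. A Morse lemma for an isolated nondegenerate critical point does not apply.
\item The rank is not a formality. If the transversal form had rank two, $\ell^k\mu_\ell$ would grow like $k$ in the split case and fail to converge in the anisotropic case. Positivity also needs isotropy.
\end{enumerate}
As written, existence, finiteness and positivity all rest on a computation you have not done.

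\textbf{How to close it.} The computation is short and makes any Morse lemma unnecessary. For $n\ge1$ write $g=\begin{pmatrix}1+a'+\omega x&b+\omega y\\ c+\omega z&1+d'+\omega w\end{pmatrix}\in K_n$. Solve the vanishing of the $\omega$-coefficient of $\det g$ for $w$, using $\omega^2=u\omega-v$. This gives the exact identity
\[
t(g)\,(1+a'+ux)=(a'-d')x+bz+cy+u(x^2+yz),
\]
where $1+a'+ux$ is a unit.
\begin{enumerate}
\item The tuple $(a',b,c,d',x,y,z)$ runs bijectively over $(\ell^n\Z_\ell)^7$, compatibly with reduction modulo every $\ell^k$. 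Hence Haar measure on $K_n$ is normalised additive measure.
\item The unimodular substitution $s=a'-d'+ux$, $b'=b+uy$ turns the right-hand side into $sx+b'z+cy$, with $d'$ free. This is three hyperbolic planes, for every $\ell$, including $\ell=2$ and ramified $\ell$.
\end{enumerate}
Hence
\[
\ell^k\mu_\ell\bigl(K_n\cap\{t\in\ell^k\Z_\ell\}\bigr)\longrightarrow \ell^{2n}\mu_\ell(K_n)\,\frac{1-\ell^{-3}}{1-\ell^{-2}},
\]
which is finite and positive. With this, your argument goes through.

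\textbf{Comparison with the paper.} The paper avoids charts entirely.
\begin{enumerate}
\item It counts modulo $\ell^k$ with $g=A+\omega B$ and $\tr B\equiv0$, and stratifies by $j=v_\ell(B)$.
\item For primitive $B_0$, the determinant condition is an affine-linear congruence in $A$ modulo $\ell^{k-j}$ with a unit coefficient.
\item This treats all of $U$ at once, with no regular/critical split and no implicit function theorem.
\end{enumerate}
Its layer weights $\ell^{-2j}$ come from three coordinates of $B$ of valuation $j$, set against a congruence modulo $\ell^{k-j}$ rather than $\ell^k$. They are exactly the rank-six phenomenon above.
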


\begin{proof}
For $k\geq1$, let $\pi_{\ell,k}$ denote reduction modulo $\ell^k$,
set $H_\ell(k):=\pi_{\ell,k}(H_\ell)$, and put
$h_\ell:=|H_\ell(1)|$.
Every invertible matrix modulo $\ell^k$ whose determinant belongs to $(\Z/\ell^k\Z)^\times$ lifts to $H_\ell$.
To see this, lift its entries, choose a rational unit with the
prescribed determinant modulo $\ell^k$, and multiply the first row by the quotient of this unit and the lifted determinant.
This quotient is congruent to $1$ modulo $\ell^k$.

The kernel of $H_\ell(k+1)\longrightarrow H_\ell(k)$ consists of matrices $I_2+\ell^kC$, where
$C\in M_2(\mathcal O_{f,\ell}/\ell\mathcal O_{f,\ell})$
and the coefficient of $\omega$ in $\tr C$ vanishes.
This is one nontrivial linear condition on eight coordinates over
$\Z/\ell\Z$. Hence
\begin{equation}\label{eq:local-density-quotient-size}
|H_\ell(k)|=h_\ell\ell^{7(k-1)}.
\end{equation}

Since $U$ is compact and open, there is an integer $e\geq1$ such
that $U=\pi_{\ell,e}^{-1}(U_e)$ inside $H_\ell$, where
$U_e:=\pi_{\ell,e}(U)$.
For $k\geq e$, let
$$
N_k:=
\#\{g\in\pi_{\ell,k}(U):t(g)\equiv0\pmod{\ell^k}\}.
$$
Normalized Haar measure pushes forward to uniform measure on
$H_\ell(k)$. Thus \eqref{eq:local-density-quotient-size} gives
\begin{equation}\label{eq:local-density-normalization}
\ell^k\mu_\ell\{g\in U:t(g)\in\ell^k\Z_\ell\}
=\frac{\ell^7}{h_\ell}\,\ell^{-6k}N_k.
\end{equation}

Every matrix counted by $N_k$ has a unique expression modulo
$\ell^k$ of the form
$$
g=A+\omega B,
\qquad
A=\begin{pmatrix}a&b\\c&d\end{pmatrix},
\qquad
B=\begin{pmatrix}x&y\\z&-x\end{pmatrix},
$$
with $A,B\in M_2(\Z/\ell^k\Z)$.
If $B\neq0$, write $B=\ell^jB_0$, where $0\leq j<k$ and
$B_0$, viewed modulo $\ell^{k-j}$, is primitive: at least one of
its three coordinates is a unit.
For $B_0=\bigl(\begin{smallmatrix}x&y\\z&-x\end{smallmatrix}\bigr)$,
the coefficient of $\omega$ in $\det(A+\omega\ell^jB_0)$ is
$\ell^jQ_j(A,B_0)$, where
\begin{equation}\label{eq:local-density-linear-congruence}
Q_j(A,B_0):=(d-a)x-cy-bz-u\ell^j(x^2+yz).
\end{equation}
The condition $g\bmod\ell^e\in U_e$ already ensures invertibility.
The remaining determinant condition is therefore
$$
Q_j(A,B_0)\equiv0\pmod{\ell^{k-j}}.
$$
For fixed primitive $B_0$, this is an affine linear congruence
in the four entries of $A$, with at least one unit coefficient.

Let $N_{k,j}$ be the contribution to $N_k$ from matrices for which
$B$ is divisible by $\ell^j$ but not by $\ell^{j+1}$, and reserve
$j=k$ for $B=0$.
For each $j\geq0$, let $c_j$ be the number of pairs $(A_0,B_0)$
modulo $\ell^e$ such that $B_0$ is primitive and trace-zero, and
$$
A_0+\omega\ell^jB_0\in U_e,
\qquad
Q_j(A_0,B_0)\equiv0\pmod{\ell^e}.
$$
Suppose that $k\geq e+j$.
Each such pair has $\ell^{3(k-j-e)}$ lifts of $B_0$ modulo
$\ell^{k-j}$.
For each lifted $B_0$, the number of lifts of $A_0$ modulo
$\ell^k$ satisfying the remaining linear congruence is
$$
\ell^{4(k-e)}\ell^{-(k-j-e)}
=\ell^{3k+j-3e}.
$$
Consequently,
\begin{equation}\label{eq:local-density-fixed-layer}
\ell^{-6k}N_{k,j}=c_j\ell^{-6e-2j}
\qquad(k\geq e+j).
\end{equation}

To bound the remaining contributions, note that there are
$(1-\ell^{-3})\ell^{3(k-j)}$ choices of $B$ of valuation $j<k$.
For each such $B$, the determinant congruence has exactly
$\ell^{3k+j}$ solutions for $A$ before imposing membership in $U$.
It follows that
\begin{equation}\label{eq:local-density-layer-bound}
\ell^{-6k}N_{k,j}
\leq(1-\ell^{-3})\ell^{-2j}
\qquad(0\leq j<k).
\end{equation}
The case $B=0$ contributes at most $\ell^{4k}$ matrices, so
$\ell^{-6k}N_{k,k}\leq\ell^{-2k}$.
Combining \eqref{eq:local-density-fixed-layer} and
\eqref{eq:local-density-layer-bound}, we obtain
$$
\ell^{-6k}N_k
=\ell^{-6e}\sum_{j=0}^{k-e}c_j\ell^{-2j}+R_k,
$$
where, with an empty sum understood to be zero,
$$
0\leq R_k
\leq(1-\ell^{-3})
\sum_{j=k-e+1}^{k-1}\ell^{-2j}+\ell^{-2k}
\ll_\ell\ell^{-2(k-e+1)}.
$$
Thus $R_k\longrightarrow0$ as $k\longrightarrow\infty$.
For $j\geq e$, the terms involving $\ell^j$ vanish modulo
$\ell^e$ in the definition of $c_j$, so $c_j=c_e$.
Summing the resulting geometric series and using
\eqref{eq:local-density-normalization}, we find
$$
L_\ell(U)
=\frac{\ell^7}{h_\ell\ell^{6e}}
\left(
\sum_{j=0}^{e-1}c_j\ell^{-2j}
+\frac{c_e\ell^{-2e}}{1-\ell^{-2}}
\right).
$$
This proves existence and finiteness.

Finally, suppose that $I_2\in U$.
In the definition of $c_e$, take $A_0=I_2$.
Every primitive trace-zero matrix $B_0$ is then admissible, since
$$
A_0+\omega\ell^eB_0=I_2,
\qquad
Q_e(I_2,B_0)=0
\qquad\text{modulo }\ell^e.
$$
Hence
$$
c_e\geq\ell^{3e}-\ell^{3(e-1)}>0,
$$
and the formula for $L_\ell(U)$ gives strict positivity.
\end{proof}

We now pass from $t(g)$ to the trace-difference condition defining
$\cl_m$. Since $\overline\omega=u-\omega$,
$$
\tr\ g-\overline{\tr g}=(2\omega-u)t(g).
$$
Set
$$
r_\ell:=\min\{v_\ell(2),v_\ell(u)\},
\qquad
v_\ell(0):=+\infty.
$$
The coordinates of $2\omega-u$ in the basis $(1,\omega)$ are
$(-u,2)$. Therefore, for $k>r_\ell$,
$$
\tr g-\overline{\tr g}\in\ell^k\mathcal O_{f,\ell}
\quad\Longleftrightarrow\quad
t(g)\in\ell^{k-r_\ell}\Z_\ell.
$$
\thref{lem:app-local-density} consequently gives
\begin{equation}\label{eq:app-original-local-density}
\Lambda_\ell(U):=\lim_{k\to\infty}\ell^k
\mu_\ell\left\{g\in U:
\tr g-\overline{\tr g}\in\ell^k\mathcal O_{f,\ell}\right\}=\ell^{r_\ell}L_\ell(U).
\end{equation}
In particular, $r_\ell=0$ for odd $\ell$.
At $\ell=2$, one has $r_2=0$ if $u$ is odd and $r_2=1$ if
$u$ is even. This retains the correction at $2$ for every integral
basis. All congruences are taken in the full ring of integers
$\mathcal O_f$.

\begin{proof}[Proof of Proposition~\ref{prop:finite-place-constant}]
By the choice of $m_f$, the principal congruence subgroup
$$
K(m_f):=
\ker\left(
\widehat H\longrightarrow
\GL_2(\mathcal O_f/m_f\mathcal O_f)
\right)
$$
is contained in $\Gamma$.
Consequently,
\begin{equation}\label{eq:app-adelic-decomposition}
\Gamma=\Gamma_{m_f}\times\prod_{\ell\nmid m_f}H_\ell,
\end{equation}
where $\Gamma_{m_f}$ denotes the joint projection of $\Gamma$ to
$$
H_{m_f}:=\prod_{\ell\mid m_f}H_\ell.
$$
Equip $H_{m_f}$ with the product measure
$\mu_{m_f}:=\prod_{\ell\mid m_f}\mu_\ell$.

Let $b=\prod_{\ell\mid m_f}\ell^{k_\ell}$, with $k_\ell\geq0$.
The Normalized Haar measure on $\Gamma_{m_f}$ pushes forward to uniform measure on $\G_b$. Hence
$$
F_b=
\frac{b}{\mu_{m_f}(\Gamma_{m_f})}
\mu_{m_f}\left\{g=(g_\ell)\in\Gamma_{m_f}:
\begin{array}{l}
\tr g_\ell-\overline{\tr g_\ell}
\in\ell^{k_\ell}\mathcal O_{f,\ell}\\
\text{for every }\ell\mid m_f
\end{array}
\right\}.
$$
Since $\Gamma_{m_f}$ is compact and open, it is a finite disjoint
union of products of compact open congruence cosets:
$$
\Gamma_{m_f}
=\bigsqcup_{a=1}^h\prod_{\ell\mid m_f}U_{a,\ell}.
$$
Applying \eqref{eq:app-original-local-density} to each factor yields
\begin{equation}\label{eq:app-exceptional-factor}
\begin{aligned}
\widehat F_{f,m_f}
&:=\lim_{\substack{k_\ell\to\infty\\\ell\mid m_f}}
F_{\prod_{\ell\mid m_f}\ell^{k_\ell}}\\
&=\frac{1}{\mu_{m_f}(\Gamma_{m_f})}
\sum_{a=1}^h\prod_{\ell\mid m_f}\Lambda_\ell(U_{a,\ell})
<\infty.
\end{aligned}
\end{equation}
This limit is positive.
Indeed, the product in this partition containing the identity has
$I_2\in U_{a,\ell}$ for every $\ell\mid m_f$.
Each corresponding local limit is positive by
Lemma~\ref{lem:app-local-density} and
\eqref{eq:app-original-local-density}.
The expression \eqref{eq:app-exceptional-factor} retains the joint
exceptional image; it requires no factorization of $\Gamma_{m_f}$
into its individual projections.

For $\ell\nmid m_f$, the local image is $H_\ell$, and $\ell$ is
odd and unramified in $K_f$.
The formulas in \cite[Propositions 5.5--5.8]{van2017} give
$$
F_{\ell^k}=
\begin{cases}
\displaystyle
\frac{\ell^3(\ell^2+\ell+1-\ell^{-2k})}
     {(\ell+1)(\ell^4-1)},
&\ell\text{ inert in }K_f,\\[8pt]
\displaystyle
\frac{\ell(\ell^4+\ell^3-\ell^2-2\ell-\ell^{2-2k})}
     {(\ell+1)(\ell^2-1)^2},
&\ell\text{ split in }K_f.
\end{cases}
$$
Their limits as $k\longrightarrow\infty$ are
\eqref{eq:app-local-factors}. These formulas also show that,
uniformly for $k\geq1$ and $\ell\nmid m_f$,
\begin{equation}\label{eq:app-uniform-tail}
1<F_{\ell^k}\leq\widetilde F_\ell,
\qquad
\widetilde F_\ell=1+O(\ell^{-2}).
\end{equation}
It follows that $\prod_{\ell\nmid m_f}\widetilde F_\ell$
converges absolutely to a finite positive value.

For an arbitrary positive integer $m$, write $m=bn$, where every
prime divisor of $b$ divides $m_f$ and $(n,m_f)=1$.
The decomposition \eqref{eq:app-adelic-decomposition} and the
Chinese remainder theorem give the exact identity
$$
F_m=F_b\prod_{\ell^k\parallel n}F_{\ell^k}.
$$
As $m$ tends to infinity by divisibility, the exponents at every
prime dividing $m_f$ tend to infinity, and hence
$F_b\longrightarrow\widehat F_{f,m_f}$.

To handle the other factors, fix a cutoff $L$.
At each of the finitely many primes $\ell\leq L$ with
$\ell\nmid m_f$, require a sufficiently large power of $\ell$ to
divide $m$. Their factors then approach $\widetilde F_\ell$.
The remaining factors satisfy, by \eqref{eq:app-uniform-tail},
$$
1\leq
\prod_{\substack{\ell^k\parallel n\\\ell>L}}F_{\ell^k}
\leq
\prod_{\substack{\ell>L\\\ell\nmid m_f}}\widetilde F_\ell.
$$
The upper bound tends to $1$ as $L\longrightarrow\infty$.
Choosing $L$ first and then the finitely many required exponents
proves the divisibility limit and \eqref{eq:app-euler-product}.
The exceptional factor and the infinite product are both finite
and strictly positive, so $\widehat F_f\in(0,\infty)$.
\end{proof}

\begin{rem}
The same argument gives the uniform bound $F_m=O_f(1)$.
Indeed, for each compact open set $U_{a,\ell}$ in the finite
partition above, the sequence
$$
\ell^k\mu_\ell
\left\{g\in U_{a,\ell}:
\tr g-\overline{\tr g}\in\ell^k\mathcal O_{f,\ell}\right\},
\qquad k\geq0,
$$
is bounded because it converges.
The finite partition therefore bounds $F_b$ uniformly in all
exponents at primes dividing $m_f$.
The remaining factors are bounded by
$\prod_{\ell\nmid m_f}\widetilde F_\ell$.

The qualification by divisibility is essential.
Along primes $\ell\nmid m_f$ tending to infinity,
$F_\ell=1+O(\ell^{-2})\longrightarrow1$.
For any fixed prime $\ell_0\nmid m_f$, however,
$F_{\ell_0^k}\longrightarrow\widetilde F_{\ell_0}>1$.
Thus an ordinary limit through all positive integers does not exist.
\end{rem}
\printbibliography[heading=bibintoc,title=References]

@article{amri2023,
  author = {Amri, Mohammed Amin},
  title = {{Chebotarev--Sato--Tate} distribution for abelian surfaces potentially of {$\mathrm{GL}_2$}-type},
  journal = {European Journal of Mathematics},
  volume = {9},
  number = {4},
  pages = {86},
  year = {2023},
  doi = {10.1007/s40879-023-00682-5}
}

@article{amri2024,
  author    = {Mohammed Amin Amri},
  title     = {On the Lang--Trotter conjecture for a class of non-generic abelian surfaces},
  journal   = {The Ramanujan Journal},
  volume    = {64},
  pages     = {891--908},
  year      = {2024},
  doi       = {10.1007/s11139-024-00884-9},
}

@article{BanaszakKedlaya2015,
  author = {Banaszak, Grzegorz and Kedlaya, Kiran S.},
  title = {An algebraic {Sato--Tate} group and {Sato--Tate} conjecture},
  journal = {Indiana University Mathematics Journal},
  volume = {64},
  number = {1},
  pages = {245--274},
  year = {2015},
  doi = {10.1512/iumj.2015.64.5438}
}

@incollection{bucur2016,
  author = {Bucur, Alina and Kedlaya, Kiran S.},
  title = {An application of the effective {Sato--Tate} conjecture},
  booktitle = {Frobenius Distributions: {Lang--Trotter} and {Sato--Tate} Conjectures},
  series = {Contemporary Mathematics},
  volume = {663},
  publisher = {American Mathematical Society},
  address = {Providence, RI},
  pages = {45--56},
  year = {2016},
  doi = {10.1090/conm/663/13349}
}

@article{bucur2024,
  author = {Bucur, Alina and Fit{\'e}, Francesc and Kedlaya, Kiran S.},
  title = {Effective {Sato--Tate} conjecture for abelian varieties and applications},
  journal = {Journal of the European Mathematical Society},
  volume = {26},
  number = {5},
  pages = {1713--1746},
  year = {2024},
  doi = {10.4171/JEMS/1443}
}

@book{Bump2013,
  author = {Bump, Daniel},
  title = {Lie Groups},
  edition = {Second},
  series = {Graduate Texts in Mathematics},
  volume = {225},
  publisher = {Springer},
  address = {New York},
  year = {2013},
  doi = {10.1007/978-1-4614-8024-2}
}

@article{cantoral2022,
  author = {Cantoral-Farf{\'a}n, Victoria and Commelin, Johan},
  title = {The {Mumford--Tate} conjecture implies the algebraic {Sato--Tate} conjecture of {Banaszak} and {Kedlaya}},
  journal = {Indiana University Mathematics Journal},
  volume = {71},
  number = {6},
  pages = {2595--2603},
  year = {2022},
  doi = {10.1512/iumj.2022.71.9309}
}

@article{colzani2011,
  author = {Colzani, Leonardo and Gigante, Giacomo and Travaglini, Giancarlo},
  title = {Trigonometric approximation and a general form of the {Erd{\H{o}}s Tur{\'a}n} inequality},
  journal = {Transactions of the American Mathematical Society},
  volume = {363},
  number = {2},
  pages = {1101--1123},
  year = {2011},
  doi = {10.1090/S0002-9947-2010-05287-0}
}

@article{fite2012,
  author = {Fit{\'e}, Francesc and Kedlaya, Kiran S. and Rotger, V{\'i}ctor and Sutherland, Andrew V.},
  title = {{Sato--Tate} distributions and {Galois} endomorphism modules in genus 2},
  journal = {Compositio Mathematica},
  volume = {148},
  number = {5},
  pages = {1390--1442},
  year = {2012},
  doi = {10.1112/S0010437X12000279}
}

@article{Hormander1968,
  author = {H{\"o}rmander, Lars},
  title = {The spectral function of an elliptic operator},
  journal = {Acta Mathematica},
  volume = {121},
  pages = {193--218},
  year = {1968},
  doi = {10.1007/BF02391913}
}

@book{knapp2002,
  title     = {Lie Groups Beyond an Introduction},
  author    = {Knapp, Anthony W.},
  edition   = {2nd},
  series    = {Progress in Mathematics},
  volume    = {140},
  year      = {2002},
  publisher = {Birkh{\"a}user},
  address   = {Boston},
  isbn      = {978-0-8176-4259-4}
}

@incollection{lagarias1977,
  author = {Lagarias, Jeffrey C. and Odlyzko, Andrew M.},
  title = {Effective versions of the {Chebotarev} density theorem},
  booktitle = {Algebraic Number Fields: {$L$}-Functions and {Galois} Properties},
  editor = {Fr{\"o}hlich, Albrecht},
  publisher = {Academic Press},
  address = {London},
  pages = {409--464},
  year = {1977}
}

@article{mazur2008,
  author = {Mazur, Barry},
  title = {Finding meaning in error terms},
  journal = {Bulletin of the American Mathematical Society},
  volume = {45},
  number = {2},
  pages = {185--228},
  year = {2008}
}

@article{Murty1985,
  author = {Murty, V. Kumar},
  title = {Explicit formulae and the {Lang--Trotter} conjecture},
  journal = {Rocky Mountain Journal of Mathematics},
  volume = {15},
  number = {2},
  pages = {535--551},
  year = {1985},
  doi = {10.1216/RMJ-1985-15-2-535}
}

@incollection{murty1999,
  author = {Murty, V. Kumar},
  title = {Frobenius distributions and {Galois} representations},
  booktitle = {Automorphic Forms, Automorphic Representations, and Arithmetic},
  series = {Proceedings of Symposia in Pure Mathematics},
  volume = {66, Part 1},
  publisher = {American Mathematical Society},
  address = {Providence, RI},
  pages = {193--211},
  year = {1999}
}

@article{serre1970,
  author  = {Serre, Jean-Pierre},
  title   = {Facteurs locaux des fonctions z{\^e}ta des
             vari{\'e}t{\'e}s alg{\'e}briques
             (d{\'e}finitions et conjectures)},
  journal = {S{\'e}minaire Delange--Pisot--Poitou.
             Th{\'e}orie des nombres},
  volume  = {11},
  number  = {2},
  year    = {1969--1970},
  pages   = {1--15},
  note    = {Expos{\'e} no. 19},
  url     = {https://numdam.org/item/SDPP_1969-1970__11_2_A4_0/}
}

@book{serre2016,
  author = {Serre, Jean-Pierre},
  title = {Lectures on {$N_X(p)$}},
  series = {Research Notes in Mathematics},
  volume = {11},
  publisher = {CRC Press},
  year = {2016}
}

@article{van2017,
  author = {Van Hirtum, Jasper},
  title = {On the distribution of {Frobenius} of weight 2 eigenforms with quadratic coefficient field},
  journal = {Experimental Mathematics},
  volume = {26},
  number = {2},
  pages = {165--188},
  year = {2017},
  doi = {10.1080/10586458.2016.1142912}
}

@article{wong2019,
  author = {Wong, Peng-Jie},
  title = {On the {Chebotarev--Sato--Tate} phenomenon},
  journal = {Journal of Number Theory},
  volume = {196},
  pages = {272--290},
  year = {2019},
  doi = {10.1016/j.jnt.2018.09.010}
}

@book{serre2000,
  title={Complex semisimple Lie algebras},
  author={Serre, Jean-Pierre},
  year={2000},
  publisher={Springer Science \& Business Media}
}

@book{diamond2005,
  title={A first course in modular forms},
  author={Diamond, Fred and Shurman, Jerry},
  volume={228},
  year={2005},
  publisher={Springer}
}

@book{lang2006,
  title={Frobenius distributions in $\GL_2$-extensions: Distribution of Frobenius automorphisms in GL2-extensions of the rational numbers},
  author={Lang, Serge and Trotter, Hale},
  year={2006},
  publisher={Springer}
}

@inproceedings{ribet1979,
  title={Endomorphism algebras of abelian varieties attached to newforms of weight 2},
  author={Ribet, KA},
  booktitle={Seminar on Number Theory, Paris},
  volume={Progress in Mathematics 12, Birkhäuser},
  pages={263--276},
  year={1979}
}

@incollection{DM1982,
  AUTHOR    = {Deligne, Pierre and Milne, James S.},
  TITLE     = {Tannakian categories},
  BOOKTITLE = {Hodge cycles, motives, and {S}himura varieties},
  SERIES    = {Lecture Notes in Mathematics},
  VOLUME    = {900},
  PUBLISHER = {Springer-Verlag},
  ADDRESS   = {Berlin-New York},
  YEAR      = {1982},
  PAGES     = {101--228}
}

@article{Kumar2026,
  title     = {Effective Joint Sato--Tate Distribution and Sign Change of Symmetric Power Coefficients},
  author    = {Kumar, Arvind and Kumari, Moni and Mishra, Prabhat Kumar},
  journal   = {arXiv preprint arXiv:2604.17532},
  year      = {2026},
  note      = {v2, published July 2026},
  url       = {https://arxiv.org/abs/2604.17532}
}

@article{loeffler2017,
  author  = {Loeffler, David},
  title   = {Images of adelic {G}alois representations for modular forms},
  journal = {Glasgow Mathematical Journal},
  volume  = {59},
  number  = {1},
  pages   = {11--25},
  year    = {2017},
  publisher={Cambridge University Press}
}

@article{Cojocaru2016,
  author    = {Alina Carmen Cojocaru and Rachel Davis and Alice Silverberg and Katherine E. Stange},
  title     = {Arithmetic properties of the Frobenius traces defined by a rational abelian variety (with two appendices by J-P. Serre)},
  journal   = {International Mathematics Research Notices},
  volume    = {2016},
  number    = {1},
  pages     = {93--109},
  year      = {2016},
  doi       = {10.1093/imrn/rnw058},
  url       = {https://doi.org}
}

@article{bm1988,
  title     = {Semianalytic and subanalytic sets},
  author    = {Edward Bierstone and Pierre D. Milman},
  journal   = {Publications Mathématiques de l'IHÉS},
  year      = {1988},
  volume    = {67},
  pages     = {5--42},
  doi       = {10.1007/BF02699126},
  url       = {https://www.numdam.org/item/PMIHES_1988__67__5_0/}
}

@book{springer1998,
  author    = {Springer, T. A.},
  title     = {Linear Algebraic Groups},
  edition   = {2nd},
  series    = {Progress in Mathematics},
  volume    = {9},
  publisher = {Birkh{\"a}user},
  address   = {Boston, MA},
  year      = {1998},
  isbn      = {978-0-8176-4021-7},
  doi       = {10.1007/978-1-4612-0941-6},
  url       = {https://doi.org/10.1007/978-1-4612-0941-6}
}
\Addresses
\end{document}